\documentclass[11pt]{article}

\usepackage{amsmath,mathrsfs}
\usepackage{amsfonts}
\usepackage{graphicx}
\usepackage{amsthm}
\usepackage[utf8]{inputenc}
\usepackage{graphicx}
\usepackage{float}
\usepackage{bbm}
\usepackage{bm}
\usepackage{setspace}
\usepackage{hyperref}
\usepackage{cite}

\usepackage[a4paper]{geometry}
\geometry{scale={0.78,0.82}}
\usepackage{times}

\allowdisplaybreaks[3]

\newcommand{\E}{\mathbb{E}}
\newcommand{\Q}{\mathbb{Q}}

\newcommand{\D}{\mathrm{d}}
\DeclareMathOperator{\Tr}{Tr}

\newtheorem{lemma}{Lemma}[]
\newtheorem{corollary}{Corollary}[]
\newtheorem{theorem}{Theorem}[]
\theoremstyle{definition}
\newtheorem{remark}{Remark}[]

\title{Particle approximation for a conditional McKean--Vlasov stochastic differential equation}

\author{
Kai Du\thanks{Shanghai Center for Mathematical Sciences, Fudan University, Shanghai 200438, China;
Shanghai Artificial Intelligence Laboratory, Shanghai, China. The author is a member of LMNS, Fudan University. 
{\small\it E-mail:} {\small\tt kdu@fudan.edu.cn}. The author's research is supported by the National Science and Technology Major Project (2022ZD0116401)
and by the National Natural Science Foundation of China (No.~12222103).}
\and
Yunzhang Li\thanks{Corresponding Author. Research Institute of Intelligent Complex Systems, Fudan University, Shanghai 200433, China. {\small\it E-mail:} {\small\tt li\_yunzhang@fudan.edu.cn}. Research supported by the National Natural Science Foundation of China (No.~12301566), and by the Science and Technology Commission of Shanghai Municipality (No.~23JC1400300), and by the Chenguang Program of Shanghai Education Development Foundation and Shanghai Municipal Education Commission (No.~22CGA01).}
\and
Yuyang Ye\thanks{Department of Finance and Control Sciences, School of Mathematical Sciences, Fudan University, Shanghai 200433, China. {\small\it E-mail:} {\small\tt 22110180052@fudan.edu.cn}.}
}
\date{}

\def\be{\begin{eqnarray}}
\def\bel{\begin{equation}\label}
\def\ee{\end{eqnarray}}

\geometry{a4paper,scale=0.8}

\numberwithin{equation}{section}
\numberwithin{theorem}{section}
\numberwithin{lemma}{section}
\numberwithin{corollary}{section}
\numberwithin{remark}{section}

\allowdisplaybreaks[4]

\begin{document}
\maketitle

\begin{abstract}
In this paper, we construct a type of interacting particle systems to approximate a class of stochastic different equations whose coefficients depend on the conditional probability distributions of the processes given partial observations.
After proving the well-posedness and regularity of the particle systems, 
we establish a quantitative convergence result for the empirical measures of the particle systems in the Wasserstein space, as the number of particles increases. 
Moreover, we discuss an Euler--Maruyama scheme of the particle system and validate its strong convergence. 
A numerical experiment is conducted to illustrate our results.

\bigskip

\noindent{\bf AMS subject classification}: 60G25, 60G35, 60H35

\medskip

\noindent{\bf Key words}:  conditional McKean--Vlasov SDE, conditional propagation of chaos, synchronous coupling, error estimates in Wasserstein space, Euler--Maruyama scheme.
\end{abstract}

\section{Introduction}

This paper is concerned with a conditional McKean--Vlasov stochastic differential equation (CMVSDE) in a completed probability space $(\Omega, \mathcal{F}, \mathbb{P})$:
\begin{equation}
\left\{
\begin{aligned}\label{partial}
X_t&= x + \int_0^t b(X_s,\pi_s)\,\D s+  \int_0^t \sigma_1  (X_s, \pi_s )\,\D W_s +  \int_0^t \sigma_2 (X_s, \pi_s)\,\D B_s ,
\\[0.3cm]
Y_t&=  \int_0^t h(X_s,\pi_s)\,\D s+B_t,
\qquad\quad  \text{with }\ 
\pi_s(\cdot) = \mathbb{P} \big[{X}_s\in \cdot \,|\mathcal{F}_s^Y \big],
\end{aligned} 
\right.
\end{equation}
where $x\in\mathbb{R}^n$, $\mathcal{F}_t^Y=\sigma\{Y_s:s\leq t\}$, and $(B,W)$ is a standard Brownian motion.  
The process $Y$ serves as observations, resulting in a conditional law $\pi$ of $X$ that is involved in the dynamics of $X$.
We remark that when $h$ vanishes, the equation~\eqref{partial} becomes the so-called McKean--Vlasov SDE with common noise which has been extensively studied in the literature, see e.g. \cite{Crisan, Kumar, Carmona2}, and \cite{Hammersley} for some recent progress. 
As $h$ emerges and relies on $X$, the increased reliance of the observation process $Y$ on the state $X$ substantially amplifies the nonlinearity within the dynamics.
Very recently, the well-posedness of~\eqref{partial} is obtained by Buckdahn, Li, and Ma~\cite{Buckdahn} by employing Banach's fixed point theorem and a localization technique. 
The study on~\eqref{partial} is motivated by applications in mathematical finance and stochastic control.
For instance, Ma, Sun, and Zhou \cite{Ma} formulated a linear form of CMVSDE~\eqref{partial} based on the Kyle--Back equilibrium model;
Buckdahn, Li, and Ma \cite{MR3719957} studied a control problem arising in the mean-field game, in which the coefficients of the controlled dynamics linearly depend on the conditional law of the state.

In this paper, we aim to construct a particle system to approximate CMVSDE~\eqref{partial}. 
To approximate the conditional law $\pi$ in an appropriate way,
we draw upon the methodology from nonlinear filtering theory and introduce the reference probability measure $\mathbb{Q}$ by the Girsanov transformation:
$$
\frac{\D \mathbb{Q}}{\D \mathbb{P}}\Big\vert_{t}=\frac{1}{L_t}:=\exp\bigg(-\int_0^t h(X_s,\pi_s) \D Y_s+\frac{1}{2}\int_0^t \left|h(X_s,\pi_s)\right|^2 \D s\bigg).
$$
As a result, the observation process $Y$ is a standard Brownian motion under $\mathbb{Q}$,
and by Bayes’ formula (or the Kallianpur--Striebel formula), one can write the conditional law $\pi$ defined in~\eqref{partial} as
\begin{equation}\label{2312261}
\pi_s(\cdot) = \mathbb{P} [{X}_s\in \cdot \,|\mathcal{F}_s^Y ] = \frac{\E^{\Q}[{L}_s \bm{1}_{{X}_s\in \,\cdot}\,|\mathcal{F}_s^Y ]}{\E^{\Q} [{L}_s|\mathcal{F}_s^Y ]}.
\end{equation}
Based on this transformation, we introduce a particle system $\{ (X^{i,N},L^{i,N})\}_{i=1}^N$ on $(\Omega, \Q, \mathcal{F})$: 
\begin{equation}\label{particle}
\left\{
\begin{aligned}
X_t^{i,N} & = x + \int_0^t ( b - \sigma_2 h ) (X_s^{i,N},\pi_s^N)\D s+ \int_0^t \sigma_1 (X_s^{i,N},\pi_s^N)\D W_s^i + \int_0^t \sigma_2 (X_s^{i,N},\pi_s^N)\D Y_s, \\[0.3cm]
L_t^{i,N} & = 1 + \int_0^t h (X_s^{i,N},\pi_s^N ) L_s^{i,N} \D Y_s,\\[0.3cm]
\pi_s^N & = \frac{\sum_{i=1}^N {L}^{i,N}_s \delta_{X_s^{i,N}}}{\sum_{i=1}^N {L}^{i,N}_s} = \sum\limits_{i=1}^N {w}_s^{i} \, \delta_{{X}_s^{i,N}} \quad\text{with}\quad {w}^i_s=\frac{{L}^{i,N}_s}{\sum_{i=1}^N {L}^{i,N}_s},
\end{aligned}
\right.
\end{equation}
where $\{W^i\}_{i=1}^N$ are independent Brownian motions under $\Q$ and independent of $Y$. 
Intuitively, the process $L^{i}$ plays a role of a likelihood weight assigned to the path $X^i$ that represents the probability of that path being sampled from the conditional law.
We can illustrate the validity of this model from two perspectives.
First, if the coefficient $h$ is a constant, then $\pi_t^N=\frac{1}{N}\sum_{i=1}^N\delta_{X_t^{i,N}}$ is an unweighted empirical measure and converges, as $N\to\infty$, to the conditional law of a McKean--Vlasov process with common noise (cf.~\cite{Crisan}).
Second, if the coefficients are all independent of the distribution $\pi$, then~\eqref{particle} becomes a standard model for particle filter, such as in \cite{Crisan1998}, Crisan, Gaines and Lyons construct a special sequence of branching particle systems to the solution of the Zakai equation;  and in \cite{Crisan1999}, Crisan and Lyons studies use branching particle systems to study the solution of Kushner–Stratonovitch equation; and in \cite{Whiteley}, some particle stability results are obtained by Whiteley.
However, in its full generality, the system~\eqref{particle} is highly coupled and nonlinear;
in particular, it is not Lipschitz even the coefficients are all bounded and smooth.
As an indispensable intermediate step,
we prove the uniqueness and existence of strong solutions to the particle system~\eqref{particle} by combining the multiplier method, the tightness argument, and the Yamada--Watanabe theorem.

In this paper, we establish the conditional propagation of chaos (CPoC) property of the particle system (\ref{particle}). Specifically, we prove that, as $N$ increases, the weighted empirical measure $\pi_t^N$ converges to the conditional law $\pi_t$ in our target equation (\ref{partial});
moreover, we provide a quantitative characterization for the convergence rate in terms of the Wasserstein distance.
We remark that there are many relevant results in the literature. 
For instance, Zheng~\cite{Zheng} applied CPoC to solve a class of quasilinear equations of parabolic type;
Kurtz and Xiong~\cite{MR1705602,Xiong,Kurtz} constructed a sequence of weighted empirical measures from a particle system to approximate a class of nonlinear SPDE and verify the convergence in the weak topology;
Coghi and Flandolli~\cite{Coghi} established the CPoC property of the particle system with common space-dependent noise;
and Erny, Löcherbach, and Loukianova \cite{Erny} considered jumping particle systems, among others.
To our knowledge, our results differ significantly from existing ones in at least two aspects. 
First, the particle system~\eqref{particle} is not covered by the existing models and its well-posedness result obtained here is new. 
Second, we derive a quantitative CPoC in terms of the Wasserstein distance, whereas a related paper \cite{Xiong} proved the convergence in law without convergence rate and under stronger regularity requirements on the coefficients.
Technically, we develop a multiplier method to deal with the well-posedness and asymptotic behavior of non-Lipschitz particle systems.
In contrast to the localization technique used in~\cite{Buckdahn,Xiong}, our method stands out in its capacity to notably streamline calculations but also to generate new quantitative estimates. 

It is worth noting that the CMVSDE (\ref{partial}) is closely related to the optimal control problem with partial observation (see e.g.~\cite{MR1191160,Pardoux,Tang,Wang,Tan}). 
In particular, in recent years, there has been growing interest in the problem of mean-field control (MFC) and mean-field game (MFG) with partial observation (see e.g.~\cite{Li,Fu,Li2,Sen,Feng,Firoozi}). 
What's more, noticing that $\pi_t$ in (\ref{partial}) satisfies the Zakai equation, our approximation result may be helpful to compute some stochastic control problem driven by Zakai equation (see e.g. \cite{MR1191160,Zhang}) since our work is associated with particle filters, which use particle systems to approximate the solutions in nonlinear filtering problems.  

The rest of this paper is structured as follows: In Section 2, we present the main results including well-posedness and CPoC property of  (\ref{particle}), and error bound of the fully discretization scheme. Auxiliary lemmas are proved in Section 3. Section 4 is devoted to the proof of the main results. The analysis is confirmed by numerical examples in Section 5.

 For convenience, in what follows we shall assume $\sigma_2=0$ in (\ref{partial}). The case of $\sigma_2\neq 0$ is known as the ``correlated noise case" in the nonlinear filtering theory, which is well-understood and without substantial difficulties, although technically slightly more tedious (see e.g.~\cite{Buckdahn}). We prefer not to pursue such complexity in this paper but focus on the conditional McKean-Vlasov nature instead, that is, we set that $\sigma_1=\sigma$ and $\sigma_2=0$.

\section{Main results}

Throughout this paper, we assume that the coefficients $\phi=(b,\sigma,h):\mathbb{R}^n\times \mathcal{P}_1(\mathbb{R}^n)\rightarrow \mathbb{R}^n\times\mathbb{R}^{n\times m}\times\mathbb{R}^k$ is bounded and globally Lipschitz continuous: there is a constant $K\ge 0$ such that
\[ |\phi(x,\mu)|\leq K \quad\text{and}\quad|\phi(x,\mu)-\phi(y,\mu') |\leq K ( |x-y|+W_1 (\mu,\mu')),
\]
where $W_1(\cdot,\cdot)$ denotes the $1$-Wasserstein distance of probability measure:
\[
W_1(\mu_1,\mu_2) := \inf \big\{ \mathbb{E}|\xi_1 - \xi_2|: \mathrm{law}(\xi_1) = 
\mu_1,\,\mathrm{law}(\xi_2) = \mu_2
\big\}.
\]

\subsection{Conditional propagation of chaos}
First, we establish the well-posedness of the particle system (\ref{particle}).
\begin{theorem}\label{Theorem exist-unique}
The particle system (\ref{particle}) admits a unique strong solution $\{(X^{i,N},L^{i,N})\}_{i=1}^N$.
\end{theorem}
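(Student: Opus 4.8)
The plan is to establish pathwise uniqueness and weak existence separately, and then invoke the Yamada–Watanabe theorem to upgrade to existence of a unique strong solution. The central difficulty, as flagged in the introduction, is that the map $(\bm{X},\bm{L})\mapsto \pi^N$ is not Lipschitz: the weights $w^i = L^i/\sum_j L^j$ involve a ratio, and although the $L^{i,N}$ are positive $\mathbb{Q}$-martingales, they are not bounded below uniformly. So the key preparatory step is to obtain good moment bounds on $L^{i,N}$ and $1/L^{i,N}$. Since $h$ is bounded by $K$ and $Y$ is a $\mathbb{Q}$-Brownian motion, each $L^{i,N}$ is a Doléans--Dade exponential $\mathcal{E}(\int_0^\cdot h(X_s^{i,N},\pi_s^N)\,\D Y_s)$, so for any $p\in\mathbb{R}$ one has $\mathbb{E}^{\mathbb{Q}}\big[\sup_{t\le T}(L_t^{i,N})^p\big]\le C(p,K,T)$ by standard exponential-martingale estimates. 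This uniform integrability of both $L^{i,N}$ and its reciprocal is what replaces the missing Lipschitz property.

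For \emph{pathwise uniqueness}, I would take two solutions $(\bm{X},\bm{L})$ and $(\bm{\tilde X},\bm{\tilde L})$ driven by the same $(W^1,\dots,W^N,Y)$, write equations for $\Delta X^i = X^{i,N}-\tilde X^{i,N}$ and $\Delta L^i = L^{i,N}-\tilde L^{i,N}$, and estimate $\mathbb{E}^{\mathbb{Q}}\big[\sup_{s\le t}(|\Delta X^i_s|^2 + |\Delta L^i_s|^2)\big]$. The drift and diffusion increments are controlled by $K(|\Delta X^i_s| + W_1(\pi_s^N,\tilde\pi_s^N))$, and the $L$-equation difference by $K(|\Delta L^i_s| + |\Delta X^i_s|\,L^{i,N}_s + W_1(\pi_s^N,\tilde\pi_s^N)\tilde L^{i,N}_s)$. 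The term needing care is $W_1(\pi_s^N,\tilde\pi_s^N)$: using the explicit couplings $\pi_s^N=\sum w^i\delta_{X^i_s}$ and the identity controlling $|w^i-\tilde w^i|$ by $\big(|\Delta L^i_s| + \sum_j|\Delta L^j_s|\big)/\sum_j L^j_s$, one bounds $W_1(\pi_s^N,\tilde\pi_s^N)\le \frac1N\sum_i|\Delta X^i_s| + C(\bm L,\bm{\tilde L})\,\frac1N\sum_i|\Delta L^i_s|$, where the prefactor $C$ is a polynomial in the $L^{i,N},\tilde L^{i,N}$ and their reciprocals. This is the ``multiplier method'' the authors advertise: one introduces a stopping time $\tau_R$ (or a weight $\prod_i (L^{i,N}_s\tilde L^{i,N}_s)^{-q}$) on which these prefactors are bounded, runs Grönwall on $\Phi_t^R:=\frac1N\sum_i\mathbb{E}^{\mathbb{Q}}\big[\sup_{s\le t\wedge\tau_R}(|\Delta X^i_s|^2+|\Delta L^i_s|^2)\big]$ to get $\Phi^R\equiv 0$, and then sends $R\to\infty$ using the moment bounds above to conclude $\Delta X^i\equiv\Delta L^i\equiv 0$ a.s.

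For \emph{weak existence} I would set up a Picard/mollification or truncation scheme: replace the weights $w^i$ by a regularized version $w^{i,\varepsilon}=L^i/(\varepsilon+\sum_j L^j)$ (or truncate $L^i$ away from $0$), which makes the coefficients Lipschitz so that the truncated system has a unique strong solution; then prove tightness of the laws of $\{(X^{i,N,\varepsilon},L^{i,N,\varepsilon},W^i,Y)\}_{i}$ in the path space as $\varepsilon\downarrow 0$ — uniform moment bounds give tightness via Aldous/Kolmogorov criteria — extract a weakly convergent subsequence, and identify the limit as a weak solution of \eqref{particle} by a standard martingale-problem argument, using the uniform $L^p$-bounds on $1/(\varepsilon+\sum_j L^j)$ to pass to the limit in the ratio. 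Combining weak existence with the pathwise uniqueness just established, the Yamada–Watanabe theorem yields a unique strong solution. The main obstacle throughout is the non-Lipschitz ratio, and the device that resolves it in every step is the uniform two-sided moment control on the likelihood martingales $L^{i,N}$ together with the localization/multiplier argument.
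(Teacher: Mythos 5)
Your overall architecture --- pathwise uniqueness plus weak existence, combined via Yamada--Watanabe --- is exactly the paper's. However, there are two substantive points where your proposal either diverges from the paper or has a gap.

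\textbf{Pathwise uniqueness.} Your bound $W_1(\pi_s^N,\tilde\pi_s^N)\le \frac1N\sum_i|\Delta X^i_s| + C(\bm L,\bm{\tilde L})\,\frac1N\sum_i|\Delta L^i_s|$ drops a factor of $|X^i_s|$: the correct estimate (the paper's Lemma~3.1) is $W_1(\pi^N,\tilde\pi^N)\le \sum_i|w^i-\tilde w^i|\,|X^i| + \sum_i\tilde w^i|X^i-\tilde X^i|$, and since $|X^i|$ is only bounded in moments, your prefactor must be a polynomial in $(\bm L,\bm{\tilde L},\bm X)$, not just the $L$'s. Your localization/stopping time would then also need to control the $X^i$'s. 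More importantly, the paper avoids reciprocal estimates on $L^i$ altogether by working with $z^i=\ln(w^i)$: the \emph{normalized log-weight} satisfies $\D z^i_t=R(X^i_t,\pi^N_t)\D t + M^{\mathrm T}(X^i_t,\pi^N_t)\D Y_t$ with $R,M$ bounded and Lipschitz. It then uses the multiplier $\Phi_t=\exp\{-\alpha N\sum_k\int_0^t[\frac{1}{N^2}+(|w^k_s|^2+|\hat w^k_s|^2)(1+|X^k_s|^2)]\D s\}$, which is engineered so the Wasserstein contribution is cancelled algebraically, giving $\E^\mathbb{Q}[\sum_i(|\Delta\hat X^i_t|^2+|\Delta\hat z^i_t|^2)\Phi_t]\le 0$ and hence uniqueness directly from $\Phi_t>0$. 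Your version with $1/L^i$ moments and stopping times can be made to work (note one only needs $\tau_R\to\infty$ a.s., not reciprocal moment bounds, to release the localization), but it is heavier.

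\textbf{Weak existence.} Here there is a genuine gap. You claim that replacing $w^i$ by $w^{i,\varepsilon}=L^i/(\varepsilon+\sum_j L^j)$ (or truncating $L^i$ away from zero) ``makes the coefficients Lipschitz so that the truncated system has a unique strong solution.'' This is not true: even with $\bm L$ truncated so that $\bm L\mapsto\bm w$ is Lipschitz, the map $(\bm X,\bm L)\mapsto\pi^N$ is still not globally Lipschitz into $(\mathcal P_1,W_1)$, because in $W_1(\pi^N,\tilde\pi^N)\le\sum_i|w^i-\tilde w^i|\,|X^i|+\cdots$ the $|X^i|$ factor is unbounded. (There is also a minor point that $\sum_i w^{i,\varepsilon}<1$, so $\pi^{N,\varepsilon}$ is not a probability measure; one would need to renormalize or shift the $L^i$'s.) So the well-posedness of your $\varepsilon$-approximating SDE is itself unclear, and you would have to face the very difficulty you are trying to regularize away. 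The paper sidesteps this entirely: it approximates by a \emph{discrete-time} Euler scheme with coefficients frozen on dyadic intervals, which is well-posed by construction since each step is an explicit formula; it then proves uniform fourth-moment bounds and H\"older-in-time estimates, obtains tightness, passes to a subsequence via Skorokhod, and identifies the limit through the local martingale problem (the paper's Theorems~4.2--4.3). Your tightness-plus-martingale-problem endgame is the same; your entry point is not, and as written it does not obviously work.
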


Next we use the particle system (\ref{particle}) to approximate the signal $X$ in~\eqref{partial} and the conditional measure $\pi$ in~\eqref{2312261}. To do this, we apply the synchronous coupling technique to generate $\{\bar{X}^i,\bar{L}^i\}_{i=1}^N$, which is the solution to SDE~\eqref{partial} with the driven terms replaced by the Brownian motion $\{ W^i,Y \}_{i=1}^N$ appearing in the particle system (\ref{particle}). 
Since $\{ W^i,Y \}_{i=1}^N$ is a standard Brownian motion, we know that for all $1\leq i\leq N$, the conditional distribution of the synchronous coupling signal $\mathscr{L}^\mathbb{P}(\bar{X}^i_t|\mathcal{F}^Y_t)$ is identical to the conditional distribution of the original signal $\pi$ defined in~\eqref{2312261}. 
For any fixed $N\in\mathbb{N}_+$, we define for all $s\in[0,T]$
\begin{equation}\label{2312252}
\bar{\pi}_s^N=\frac{\sum_{i=1}^N \bar{L}_s^i\delta_{\bar{X}_s^i}}{\sum_{i=1}^N \bar{L}_s^i} = \sum\limits_{i=1}^N \bar{w}_s^{i} \, \delta_{\bar{X}_s^{i}} \quad\text{with}\quad \bar{w}^i_s=\frac{\bar{L}^i_s}{\sum_{i=1}^N \bar{L}^i_s}.
\end{equation}

We have the following approximation theorem.

\begin{theorem}\label{multi}
There is a constant $C=C(K,T,n)$ such that 
\be\nonumber  
\E^{\Q}\Big[\max_{t\in[0,T]}\big|X_t^{i,N}-\bar{X}^{i}_t\big|^2\Psi_t\Big] \leq C N^{-\frac{8}{7n+8}} , \quad\quad \forall i=1,2,...,N,
\ee 
where $n$ is the dimension of synchronous coupling signal $\bar{X}^i$ and the multiplier is defined by
$$
\Psi_t=\exp\bigg\{ -\alpha N\sum\limits_{k=1}^N\int_0^t\left[\frac{1}{N^2}+\Big(\big|w_s^k\big|^2+\big|\bar{w}_s^k\big|^2\Big)\Big(1+\big|X_s^{k,N}\big|^2\Big)\right]\mathrm{d}s \bigg\},
$$
with $\alpha=\alpha(K,T)$ and the random normalized weights ${w}^i_s$ and $\bar{w}^i_s$ given by~\eqref{particle} and~\eqref{2312252}.
\end{theorem}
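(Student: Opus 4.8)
\noindent\emph{Plan of proof.}
The plan is to run a synchronous-coupling Gr\"onwall estimate for the pathwise differences, using the multiplier $\Psi_t$, which is designed so that its It\^o correction absorbs the non-Lipschitz terms produced by the normalized weights. With $\sigma_2=0$, write $\Delta^i_t:=X^{i,N}_t-\bar X^i_t$ and $\Lambda^i_t:=L^{i,N}_t-\bar L^i_t$ (both starting from $0$), and recall $\mathscr L^{\mathbb P}(\bar X^i_t\,|\,\mathcal F^Y_t)=\pi_t$. From~\eqref{particle}--\eqref{partial}, $(\Delta^i,\Lambda^i)$ solves an SDE driven by $W^i,Y$ whose coefficients are the increments of $b,\sigma$ between $(X^{i,N}_t,\pi^N_t)$ and $(\bar X^i_t,\pi_t)$, and of $h(\cdot,\cdot)L$ between the same pairs. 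Using the Lipschitz hypothesis I decompose each increment into a state part of size $\le K|\Delta^i_t|$, an empirical-measure part of size $\le K\,W_1(\pi^N_t,\bar\pi^N_t)$, and a chaos part of size $\le K\,W_1(\bar\pi^N_t,\pi_t)$; the $h$-increment additionally carries a factor $L^{i,N}_t$, which is harmless because $L^{i,N}$ has all $\Q$-moments bounded uniformly in $N$ (an a~priori lemma, Section~3, using that $Y$ is a $\Q$-Brownian motion and $b,\sigma,h$ are bounded; the same lemma gives uniform moment bounds for $X^{i,N},\bar X^i,\bar L^i$ and for $N/\!\sum_j L^{j,N}_t$, $N/\!\sum_j\bar L^j_t$, and shows $\max_i L^{i,N}_t/N\to0$ in every $L^q(\Q)$).

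\smallskip

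The crux is the empirical-measure term. Coupling $\pi^N_t$ to $\bar\pi^N_t$ by first moving each atom $X^{i,N}_t$ onto $\bar X^i_t$ with the weights frozen, then reallocating the weights on the atoms $\{\bar X^i_t\}$, gives $W_1(\pi^N_t,\bar\pi^N_t)\le\sum_i w^i_t|\Delta^i_t|+\sum_i|w^i_t-\bar w^i_t|\,|\bar X^i_t|$. Inserting $w^i_t-\bar w^i_t=w^i_t(\bar S_t-S_t)/\bar S_t+\Lambda^i_t/\bar S_t$ with $S_t=\sum_j L^{j,N}_t$, $\bar S_t=\sum_j\bar L^j_t$, applying Cauchy--Schwarz in the particle index, and using $|\bar X^i_t|\le|X^{i,N}_t|+|\Delta^i_t|$, $|\Delta^i_t|\le CT$, and the a~priori bounds for the prefactors $N/S_t$, $N/\bar S_t$, $\max_i L^{i,N}_t/N$, one obtains a bound of the schematic form
\[
W_1(\pi^N_t,\bar\pi^N_t)^2\ \le\ C\,\Theta_t\,\varphi_t\,\frac1N\sum_{j=1}^N\big(|\Delta^j_t|^2+|\Lambda^j_t|^2\big),\qquad
\varphi_t:=N\sum_{k=1}^N\Big[\tfrac1{N^2}+\big(|w^k_t|^2+|\bar w^k_t|^2\big)\big(1+|X^{k,N}_t|^2\big)\Big],
\]
where $\varphi_t\ge1$ is exactly the exponent integrand of $\Psi_t$, so $\D\Psi_t=-\alpha\varphi_t\Psi_t\,\D t$, and $\Theta_t$ is a factor with all $\Q$-moments bounded uniformly in $N$. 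Now apply It\^o's formula to $(|\Delta^1_t|^2+c\,|\Lambda^1_t|^2)\Psi_t$ for a suitable $c>0$: the It\^o correction contributes $-\alpha\varphi_t(|\Delta^1_t|^2+c|\Lambda^1_t|^2)\Psi_t\,\D t$ (and the cross-variation between the $W^1$-driven part of $|\Delta^1|^2$ and the $Y$-driven part of $|\Lambda^1|^2$ vanishes by independence). Taking $\E^{\Q}$ and using that the $N$ synchronously coupled systems are exchangeable under $\Q$, the contribution of $W_1(\pi^N_t,\bar\pi^N_t)^2$ collapses, by the displayed bound, to a term $\le C\,\E^{\Q}[\Theta_t\varphi_t(|\Delta^1_t|^2+|\Lambda^1_t|^2)\Psi_t]$ involving only the first particle, which for $\alpha=\alpha(K,T)$ large is dominated by the negative It\^o correction (a H\"older step handling $\Theta_t$ and the extra $L^{1,N}_t$-factors costing only a sub-polynomial factor in $N$). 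What remains is the linear inequality
\[
\frac{\D}{\D t}\,\E^{\Q}\big[(|\Delta^1_t|^2+c|\Lambda^1_t|^2)\Psi_t\big]\ \le\ C\,\E^{\Q}\big[(|\Delta^1_t|^2+|\Lambda^1_t|^2)\Psi_t\big]\ +\ C\,\E^{\Q}\big[(1+(L^{1,N}_t)^2)\,\Psi_t\,W_1(\bar\pi^N_t,\pi_t)^2\big].
\]

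\smallskip

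The remaining chaos term is handled by a quantitative conditional law of large numbers for the weighted empirical measure. Conditionally on $\mathcal F^Y$ the pairs $(\bar X^i,\bar L^i)$ are i.i.d.\ under $\Q$, and by the Kallianpur--Striebel formula $\bar\pi^N_t$ is precisely the self-normalized importance-sampling (particle-filter) estimator of $\pi_t=\mathscr L^{\mathbb P}(\bar X^i_t\,|\,\mathcal F^Y_t)$ built from those samples. Using $\Psi_t\le1$, the uniform moment bounds on $\bar X^i$ and on the weights, and a regularization/truncation argument — mollify at scale $\varepsilon$, localize to a ball of radius $R$, bound the mollified discrepancy in $L^2$ by a variance of order $\varepsilon^{-n}N^{-1}$, and control the tails by the moment bounds — one reaches a rate of the form $\varepsilon^{2}+R^{a}\varepsilon^{-n}N^{-1}+R^{-b}$; optimizing over $\varepsilon$ and $R$ gives $\E^{\Q}[\sup_{t\le T}\Psi_t\,W_1(\bar\pi^N_t,\pi_t)^2]\le CN^{-8/(7n+8)}$, which is the step where the dimension $n$ and the exponent are determined. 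Feeding this into the Gr\"onwall inequality (run in parallel with the supremum kept inside the expectation, the martingale part being controlled by the Burkholder--Davis--Gundy inequality and then absorbed), and noting the bound is the same for every $i$ by exchangeability, yields the asserted estimate.

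\smallskip

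\noindent The principal obstacle is exactly what is addressed above: the normalized weights make the particle system non-Lipschitz with an unbounded, state-dependent modulus, and it is the tailored multiplier $\Psi_t$ — together with taking expectations and exploiting exchangeability, so that the bound $W_1(\pi^N,\bar\pi^N)^2\le C\,\Theta_t\varphi_t\cdot(\text{average of }|\Delta|^2\text{ and }|\Lambda|^2)$ reduces to a single-particle $\varphi_t$-term — that makes these contributions cancellable. The secondary difficulty is quantitative rather than qualitative: turning the conditional law of large numbers for the particle-filter estimator $\bar\pi^N_t$ into an explicit Wasserstein rate, which forces the dimension-dependent loss in the exponent.
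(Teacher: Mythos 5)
Your overall plan — synchronous coupling plus a tailored exponential multiplier to absorb the non-Lipschitz weight terms, followed by Gr\"onwall, BDG, and a mollification estimate for the chaos error — matches the architecture of the paper's proof, but there is a concrete gap in the way you make the multiplier cancel the empirical-measure term, and it traces back to a different choice of auxiliary variable.

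\smallskip
\emph{Primary gap: $\Lambda^i=L^{i,N}-\bar L^i$ versus $\Delta z^i=\ln w^i-\ln\bar w^i$.} The paper does not track the raw difference $\Lambda^i=L^{i,N}-\bar L^i$; it tracks $z^i=\ln w^i$ and $\bar z^i=\ln\bar w^i$ (Lemma~\ref{lemma1} together with $|e^x-e^y|\le e^{x\vee y}|x-y|$, and Lemma~\ref{lemma3} for the It\^o estimates). This is not cosmetic. With $z$, the It\^o dynamics have coefficients $R,M$ that are \emph{bounded and Lipschitz} — unlike the $hL$ coefficient of $L$, which carries the unbounded factor $L$. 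More importantly, the resulting bound
$W_1(\pi^N,\bar\pi^N)^2\Psi\le 2\sum_{j,k}|X^{j,N}|^2(|w^j|^2+|\bar w^j|^2)|\Delta\bar z^k|^2\Psi+2\sum_{j,k}|\bar w^j|^2|\Delta\bar X^k|^2\Psi$
has coefficients that are \emph{exactly} the integrand of $\Psi_t$, so the $\D\Psi_t$ term kills them pointwise for $\alpha$ large enough, with no residual random prefactor. Your decomposition $w^i-\bar w^i=w^i(\bar S-S)/\bar S+\Lambda^i/\bar S$, with $S=\sum L^{j,N}$, $\bar S=\sum\bar L^j$, produces after Cauchy--Schwarz a bound of the form $C\,\Theta_t\varphi_t\cdot\frac1N\sum_j(|\Delta^j|^2+|\Lambda^j|^2)$ with an extra random prefactor $\Theta_t$ (coming from $N/\bar S$, $N/S$, and the moments of $\bar X$). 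You propose to remove $\Theta_t$ by a ``H\"older step costing only a sub-polynomial factor,'' but this step is not available inside the Gr\"onwall loop: H\"older against $\Theta_t$ raises the remaining factor to a power $q>1$, and $\E^\Q[(\varphi_t(|\Delta^1|^2+c|\Lambda^1|^2)\Psi_t)^q]^{1/q}$ cannot then be dominated by the linear term $\alpha\,\E^\Q[\varphi_t(|\Delta^1|^2+c|\Lambda^1|^2)\Psi_t]$ that the negative It\^o correction actually provides. The multiplier in the statement of the theorem is tuned to the $\Delta z$ bookkeeping; it does not absorb your $\Theta_t$. (A related sign of this mismatch: your choice of intermediate measure, moving atoms first and reweighting second, produces $|\bar X^i|$ in the reweighting term, which you convert via $|\bar X^i|\le|X^{i,N}|+|\Delta^i|$ — but the claim $|\Delta^i_t|\le CT$ is false, since $\Delta^i$ has a nontrivial martingale part; the paper's choice of intermediate measure keeps $|X^{i,N}|$ directly and avoids this.)

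\smallskip
\emph{Secondary gap: the $W_1(\pi^N,\bar\pi^N)^2\Psi$ source after BDG.} When you take $\sup_{r\le t}$ inside the expectation and apply BDG to the stochastic integrals, the quadratic variation terms $|\Delta\bar\sigma^i|^2,|\Delta\bar M^i|^2$ reproduce a new $W_1(\pi^N,\bar\pi^N)^2\Psi$ source which is \emph{not} inside the multiplier cancellation. The paper handles this with the standalone Lemma~\ref{Wasser-empirical-bound}, proved by a further It\^o argument on the auxiliary quantity $B_t^j(|\Delta\bar z_t^k|^2+|\Delta\bar X_t^k|^2)$; you do not account for this step, and without it the Gr\"onwall inequality does not close.

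\smallskip
\emph{On the chaos-rate step.} Your two-parameter mollification-plus-truncation is a legitimate alternative in spirit to the paper's one-parameter Gaussian smoothing, but the exponent $N^{-8/(7n+8)}$ is forced by quite specific bookkeeping in Lemma~\ref{convergence rate} (an $L^4$-estimate for $W_1$ against an $L^8$-estimate for the weighted density, the identity $\phi_d^8=Cd^{-7n}\phi_{d/(2\sqrt2)}$, and polynomial weights $|x|^{3n+7}+1$); simply asserting that optimizing $\varepsilon$ and $R$ yields the same exponent requires the same arithmetic to be carried out, and the schematic $\varepsilon^2+R^a\varepsilon^{-n}N^{-1}+R^{-b}$ does not obviously reproduce it.
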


\begin{remark}
If $h$ is a constant or all the coefficients are independent of $\pi$, then multiplier $\Psi_t$ can be just chosen as $\Psi_t=e^{-\alpha t}$ with $\alpha=\alpha(K,T)>0$. Then we can get that
$$
\E^{\Q}\Big[\max_{t\in[0,T]}\big|X_t^{i,N}-\bar{X}^{i}_t\big|^2\Big] \leq C N^{-\frac{8}{7n+8}} , \quad\quad \forall i=1,2,...,N,
$$
where $C=C(K,T,n)$.
\end{remark}

In view of Theorem \ref{multi}, we can get the mean convergence rate of $\E^{\Q} [|X_t^{i}-\bar{X}^{i}_t|^2 ]$ as follows by estimating multipliers.

\begin{corollary}\label{coro2.1}
For all $i=1,2,...,N$ and $\beta>0$, there exists $C=C(\beta,K,T,n)$ such that
$$
\E^{\Q}\Big[\max\limits_{t\in[0,\tau_\beta\wedge T]}\big|X_t^{i,N}-\bar{X}^{i}_t\big|^2\Big]\leq C N^{-\frac{8}{7n+8}},
$$
where the stopping time is defined as
$$
\tau_\beta:=\inf\bigg\{t:N\sum\limits_{k=1}^N\int_0^t\left[\frac{1}{N^2}+\Big(\big|w_s^k\big|^2+\big|\bar{w}_s^k\big|^2\Big)\Big(1+\big|X_s^{k,N}\big|^2\Big)\right]\mathrm{d}s>\beta\bigg\},
$$
with
$$
\Q[\tau_\beta>T]>1-\frac{C}{\beta}.
$$

Furthermore, it holds that
\be\nonumber
\max\limits_{t\in[0,T]} \E^{\Q}\Big[W_1\big(\pi^N_t,\pi_t\big)^2\Big]+\E^{\Q}\Big[\max\limits_{t\in[0,T]}\big|X_t^{i,N}-\bar{X}^{i}_t\big|^2\Big]\leq C\bigl|\ln(N)\bigr|^{-\frac{1}{2p}},  \quad\quad \forall p>1,
\ee  
where $C=C(K,T,n,p)$, $\pi_t^N$ is the empirical measure defined in~\eqref{particle} and $\pi_t$ is the target measure defined in~\eqref{2312261}.
\end{corollary}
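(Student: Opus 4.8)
The plan is to deduce both statements from Theorem~\ref{multi} by comparing the multiplier $\Psi_t$ with the stopping time $\tau_\beta$, and, for the logarithmic estimate, to optimise $\beta$ against the exponential price of discarding $\Psi_t$. First I would note that, by the construction of $\tau_\beta$, on $[0,\tau_\beta\wedge T]$ the exponent in $\Psi_t$ is at most $\alpha\beta$, so $\Psi_t\ge e^{-\alpha\beta}$ there and hence $\max_{t\le\tau_\beta\wedge T}|X^{i,N}_t-\bar X^i_t|^2\le e^{\alpha\beta}\max_{t\le T}|X^{i,N}_t-\bar X^i_t|^2\Psi_t$; Theorem~\ref{multi} then gives the first bound with $C(\beta,K,T,n)=e^{\alpha\beta}C(K,T,n)$. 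For the tail estimate I would apply Markov's inequality, reducing the matter to showing that $\E^{\Q}\big[N\sum_{k=1}^N\int_0^T\big(\frac1{N^2}+(|w^k_s|^2+|\bar w^k_s|^2)(1+|X^{k,N}_s|^2)\big)\,\D s\big]$ is bounded by a constant $C(K,T,n)$ independent of $N$ and $\beta$. The $N^{-2}$ term contributes exactly $T$; for the weighted terms, exchangeability reduces the task to a uniform bound on $N^2\E^{\Q}[(w^1_s)^2(1+|X^{1,N}_s|^2)]$, and the crux is a negative-moment estimate for the empirical normalisation $\bar L^N_s:=\frac1N\sum_jL^{j,N}_s$. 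To obtain it I would use that $|h|\le K$ forces $L^{j,N}_s\ge e^{-K^2T/2}\exp(M^j_s)$ with $M^j_s=\int_0^sh(X^{j,N}_u,\pi^N_u)\,\D Y_u$, so that by Jensen's inequality $\bar L^N_s\ge e^{-K^2T/2}\exp\big(\frac1N\sum_jM^j_s\big)$, where $\frac1N\sum_jM^j_s=\int_0^s\bar h_u\,\D Y_u$ with $\bar h_u:=\frac1N\sum_jh(X^{j,N}_u,\pi^N_u)$ satisfying $|\bar h_u|\le K$; the exponential-martingale estimate then yields $\E^{\Q}[(\bar L^N_s)^{-q}]\le C(q,K,T)$ uniformly in $N$, and combining this with the uniform moment bounds on $L^{1,N}_s$ and $X^{1,N}_s$ (both immediate from boundedness of $\phi$) via Hölder's inequality gives the claim; the same applies to $\bar w^1_s$. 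This proves Part~1.

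For the logarithmic bound on the particle error I would fix $\beta>0$ and $r\in(1,2)$ with conjugate exponent $r'$, split $\E^{\Q}[\max_{t\le T}|X^{i,N}_t-\bar X^i_t|^2]$ over $\{\tau_\beta>T\}$ and $\{\tau_\beta\le T\}$ — on the first event the supremum over $[0,T]$ equals that over $[0,\tau_\beta\wedge T]$, handled by Part~1 — and apply Hölder's inequality on the second event, using that the $2r$-th moment of $\max_{t\le T}|X^{i,N}_t-\bar X^i_t|$ is bounded uniformly in $N$ (boundedness of $\phi$ and the Burkholder--Davis--Gundy inequality). This gives
\[
\E^{\Q}\Bigl[\max_{t\le T}|X^{i,N}_t-\bar X^i_t|^2\Bigr]\le e^{\alpha\beta}\,C(K,T,n)\,N^{-\frac8{7n+8}}+C(r,K,T,n)\,\beta^{-1/r'}.
\]
Choosing $\beta=\frac4{(7n+8)\alpha}\ln N$ turns the right-hand side into $C(K,T,n)\,N^{-\frac4{7n+8}}+C(r,K,T,n)(\ln N)^{-1/r'}$; taking $r'=2p$ (so that $r=\frac{2p}{2p-1}\in(1,2)$ for $p>1$) and absorbing the polynomially small term, I would obtain $\E^{\Q}[\max_{t\le T}|X^{i,N}_t-\bar X^i_t|^2]\le C(K,T,n,p)\,|\ln N|^{-1/(2p)}$.

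For the Wasserstein term I would use $W_1(\pi^N_t,\pi_t)\le W_1(\pi^N_t,\bar\pi^N_t)+W_1(\bar\pi^N_t,\pi_t)$. Coupling the two weighted empirical measures along the particle index bounds the first summand by $\sum_iw^i_t|X^{i,N}_t-\bar X^i_t|+\frac12\big(\sup_i|X^{i,N}_t|+\sup_i|\bar X^i_t|\big)\sum_i|w^i_t-\bar w^i_t|$; through the linear equations governing $L^{i,N}$ and $\bar L^i$, standard moment estimates and Gronwall's inequality (the normalising factors being again treated by the negative-moment bound on $\bar L^N_s$), the weight difference $\sum_i|w^i_t-\bar w^i_t|$ is controlled in $L^2(\Q)$ by $\sup_{s\le t}\max_i|X^{i,N}_s-\bar X^i_s|$ together with $\int_0^tW_1(\pi^N_s,\pi_s)^2\,\D s$, the latter reabsorbed by Gronwall; invoking the particle-error bound just obtained then yields $\max_{t\le T}\E^{\Q}[W_1(\pi^N_t,\bar\pi^N_t)^2]\le C\,|\ln N|^{-1/(2p)}$. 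For the second summand, conditionally on $\mathcal F^Y_t$ the pairs $(\bar X^i_t,\bar L^i_t)$ are i.i.d.\ and, by the Kallianpur--Striebel formula, $\pi_t=\E^{\Q}[\bar L^1_t\,\mathbbm{1}_{\{\bar X^1_t\in\,\cdot\,\}}\mid\mathcal F^Y_t]/\E^{\Q}[\bar L^1_t\mid\mathcal F^Y_t]$, so $\bar\pi^N_t$ is a self-normalised empirical approximation of $\pi_t$; a Fournier--Guillin-type estimate for the $W_1$-distance between a measure and its empirical measure, together with the uniform moments of $\bar X^1_t$ and $\bar L^1_t$, gives $\max_{t\le T}\E^{\Q}[W_1(\bar\pi^N_t,\pi_t)^2]\le C\,N^{-c(n)}$ for some $c(n)>0$, which is dominated. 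Summing the two contributions and adding the particle-error bound finishes the proof.

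The main obstacle, I expect, is the uniform-in-$N$ negative-moment control of the empirical normalisation $\bar L^N_s$ in Part~1: the averaging identity $\frac1N\sum_jM^j_s=\int_0^s\bar h\,\D Y$ is what keeps the estimate clean, and without it the tail bound would pick up spurious powers of $\log N$. A secondary difficulty is closing the Gronwall loop for the weight differences in the Wasserstein step while retaining explicit dependence on $N$. By contrast, the degradation from the polynomial rate of Theorem~\ref{multi} to the present logarithmic one appears unavoidable by this method: it is forced by the factor $e^{\alpha\beta}$ together with the bound $\Q[\tau_\beta\le T]\le C\beta^{-1}$.
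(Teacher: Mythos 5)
Your treatment of Part 1 and the particle-error logarithmic bound is essentially the paper's argument: discard the multiplier on the event where it is not too small, use Markov's inequality for the complementary event, and optimise $\beta\sim\ln N$ against the exponential price of discarding $\Psi_t$. Your derivation of the moment bound behind $\Q[\tau_\beta>T]>1-C/\beta$ differs from the paper's: you go through a negative-moment bound for $\bar L^N_s=\tfrac1N\sum_j L^{j,N}_s$ via Jensen and the averaging identity, which is correct but more roundabout than needed. The paper instead observes that the normalised weights themselves satisfy a closed SDE $\D w_t^i=w_t^iH(X_t^i,\pi_t^N)\,\D t+w_t^iM^{\mathrm T}(X_t^i,\pi_t^N)\,\D Y_t$ with bounded $H,M$ and $w_0^i=1/N$, so $\E^\Q[\max_s|w_s^i|^p]\le C N^{-p}$ by standard exponential-martingale and BDG arguments; combined with the uniform moments of $X^{k,N}$ and Cauchy--Schwarz this gives the required bound directly, without ever touching negative moments of $\bar L^N$.

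The gap is in the Wasserstein term. You propose to control $\E^\Q[W_1(\pi_t^N,\bar\pi_t^N)^2]$ by bounding $\sum_i|w_t^i-\bar w_t^i|$ through a Gronwall loop in terms of $\max_i\sup_{s\le t}|X_s^{i,N}-\bar X_s^i|$ and $\int_0^tW_1(\pi_s^N,\pi_s)^2\,\D s$, with the latter ``reabsorbed by Gronwall.'' This does not close: the Gronwall coefficient multiplying the unknown is of the form $N\sum_k(|w_s^k|^2+|\bar w_s^k|^2)(1+|X_s^{k,N}|^2)$, which is not pathwise bounded, and the cross terms $\E^\Q\bigl[\bigl(\sum_iw_t^i|X_t^{i,N}-\bar X_t^i|\bigr)^2\bigr]$ cannot be deduced from the marginal bound $\E^\Q\bigl[\max_t|X_t^{i,N}-\bar X_t^i|^2\bigr]$ because of the correlation between the weights and the particle errors. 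This is precisely the obstruction the multiplier $\Psi_t$ is designed to remove. The paper resolves it with a separate result (Lemma~\ref{Wasser-empirical-bound}), which proves
\[
\max_{t\in[0,T]}\E^\Q\bigl[W_1(\pi_t^N,\bar\pi_t^N)^2\,\Psi_t\bigr]\le CN^{-\frac{8}{7n+8}}
\]
by running the Gronwall argument \emph{with} $\Psi_t$ inserted, so the pathological coefficient is absorbed into $\D\Psi_t$; the corollary's Wasserstein bound then follows by the \emph{same} multiplier-discarding/Chebyshev step you already used for the particle error, together with Lemma~\ref{convergence rate} for $W_1(\bar\pi_t^N,\pi_t)$. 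In short, you identified the right ``secondary difficulty'' but it is not secondary: without a $\Psi_t$-weighted Wasserstein estimate (or an equivalent device), your Gronwall step cannot be closed, and the route through the already-established particle-error bound does not substitute for it.
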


\begin{remark}
The convergence rate $\mathcal{O}(|\ln(N)|^{-\frac{1}{2}+})$ results from the fact that the inverse of multiplier $\Psi^{-1}$ is not bounded, and even not integrable in general, which makes the estimation of convergence rate become intractable. However, we observe that $\ln(\Psi^{-1})$ is integrable, which allows us to get the convergence rate $\mathcal{O}(|\ln(N)|^{-\frac{1}{2}+})$. Although this convergence rate is much slower compared to the classical result, it seems to be the first result in regard to the strong convergence rate for the mean-field particle filters. Moreover, our numerical scheme works very well in some examples even in the cases when coefficients are not bounded (see Section~\ref{secnum}).
\end{remark}

\subsection{Sequential importance sampling: full discretization}
Next, we use the sequential importance sampling to fully discretize the SDE (\ref{partial}), which is a Monte Carlo algorithm based on the CPoC property and the Euler scheme. To approximate the random weight $\{\bar{L}^i\}_{i=1}^{N}$, it is equivalent to consider the random normalized weight $\{\bar{w}^i\}_{i=1}^{N}$ define in~\eqref{2312252}. By Ito's formula, the dynamic of $\bar{w}_t^i$ is governed by
\begin{equation}\label{2312253}
\D \bar{w}_t^{i} = \bar{w}_t^{i}\, H\left( \bar{X}_{t}^{i}, \bar{\pi}_{t}^{N} \right) \D t + \bar{w}_t^{i} \, M^{\text{T}}\left( \bar{X}_{t}^{i}, \bar{\pi}_{t}^{N} \right) \D Y_t,\qquad \bar{w}^{i}_0=1,
\end{equation}
where the coefficient $(M,H):\mathbb{R}^n\times \mathcal{P}_1(\mathbb{R}^n)\rightarrow \mathbb{R}^k\times\mathbb{R}$ is the function satisfying
$$
M(x,\pi)=h(x,\pi)-\int h(x,\pi)\pi(\D x),
$$
and
$$ H(x,\pi)=\Big|\int h(x,\pi)\pi(\D x)\Big|^2-h^{\text{T}} (x,\pi)\int h(x,\pi)\pi(\D x).
$$

Then the full discretization for the synchronous coupling signal $\bar{X}^i$ and for the random normalized weight $\bar{w}^i$ in~\eqref{2312253} is: $\forall 1\leq i\leq N$,
\begin{equation}\label{Euler}
\left\{
\begin{aligned}
&\D X_t^{i,\Delta}=b\big(X_{\theta(t,\Delta)}^{i,\Delta},\pi_{\theta(t,\Delta)}^{N,\Delta}\big)\D t+\sigma\big(X_{\theta(t,\Delta)}^{i,\Delta},\pi_{\theta(t,\Delta)}^{N,\Delta}\big)\D W_t^i,\qquad X_0^{i,\Delta}=x,\\[0.3cm]
&\D w_t^{i,\Delta} = w_t^{i,\Delta} H\big( X_{\theta(t,\Delta)}^{i,\Delta}, \pi_{\theta(t,\Delta)}^{N,\Delta} \big) \D t + w_t^{i,\Delta}M^{\text{T}}\big(X_{\theta(t,\Delta)}^{i,\Delta},\pi_{\theta(t,\Delta)}^{N,\Delta}\big)\D Y_t,\qquad w^{i,\Delta}_0=1,
\end{aligned}
\right.
\end{equation}
where

\begin{itemize} 

\item 
$\Delta\in(0,T)$ is the size of time step, and $\theta: [0,T] \times (0,T)\rightarrow [0,T]$ is a measurable function such that
$$
\theta(t,\Delta)=\Big\lfloor \frac{t}{\Delta}\Big\rfloor \Delta,
$$
with $\lfloor \cdot \rfloor$ being the floor function;

\item
the empirical measure with random weights is defined by
$$
\pi_{t}^{N,\Delta} = \sum\limits_{i=1}^N w_t^{i,\Delta} \,\, \delta_{X_t^{i,\Delta}}\,.
$$

\end{itemize}

We can prove the following approximation results.

\begin{theorem}\label{Euler scheme}
For all $\Delta \in (0,T)$, it holds that
\begin{equation}\nonumber
\E^\Q\Big[\max_{s\in[0,T]}\big|\bar{X}_s^{i}-X_s^{i,\Delta}\big|^2\Big]\leq C\Big\{ \big[\ln\big(\Delta^{-1}\big)\big]^{-\frac{1}{2p}} + \big[\ln\big(N\big)\big]^{-\frac{1}{2p}}\Big\},\quad\quad \forall p>1,
\end{equation}
where $C=C(K,T,n,p)$.
In particular, if we take $\Delta=\frac{T}{N}$, we have
\begin{equation}\nonumber
\E^\Q\Big[\max\limits_{s\in[0,t]}\big|\bar{X}_s^{i}-X_s^{i,\Delta}\big|^2\Big]\leq C\big[\ln\big(N\big)\big]^{-\frac{1}{2p}},\quad\quad \forall p>1,
\end{equation}
where $C=C(K,T,n,p)$.
\end{theorem}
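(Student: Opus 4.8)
The plan is to insert the continuous--time particle system \eqref{particle} as an intermediate layer between the synchronous coupling signal $\bar X^i$ and its Euler scheme $X^{i,\Delta}$. Let $(X^{i,N},L^{i,N})$ be the solution of \eqref{particle} furnished by Theorem~\ref{Theorem exist-unique}, and let $w^i:=L^{i,N}\big/\sum_j L^{j,N}$ be the associated normalized weight, which by It\^o's formula solves \eqref{2312253} with $\bar\pi^N$ replaced by $\pi^N$; thus \eqref{Euler} is precisely the Euler--Maruyama scheme for the pair $(X^{i,N},w^i)$, driven by the same Brownian motions $(W^i,Y)$ and interacting only through $N$--particle empirical measures, so that no Monte Carlo error enters this comparison. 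By the triangle inequality,
$$
\E^{\Q}\Big[\max_{s\in[0,T]}\big|\bar X^i_s-X^{i,\Delta}_s\big|^2\Big]\le 2\,\E^{\Q}\Big[\max_{s\in[0,T]}\big|\bar X^i_s-X^{i,N}_s\big|^2\Big]+2\,\E^{\Q}\Big[\max_{s\in[0,T]}\big|X^{i,N}_s-X^{i,\Delta}_s\big|^2\Big],
$$
and the first term on the right is already $\le C|\ln N|^{-1/(2p)}$ by the last estimate in Corollary~\ref{coro2.1}. It therefore remains to show that the pure time--discretization error between \eqref{particle} and \eqref{Euler} satisfies $\E^{\Q}\big[\max_{s\in[0,T]}|X^{i,N}_s-X^{i,\Delta}_s|^2\big]\le C[\ln\Delta^{-1}]^{-1/(2p)}$ with $C$ not depending on $N$.

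For this I would re--run the multiplier argument used for Theorem~\ref{multi}. Set $e^i_t:=X^{i,N}_t-X^{i,\Delta}_t$ and $\varepsilon^i_t:=w^i_t-w^{i,\Delta}_t$, and introduce a multiplier $\Psi^{\Delta}_t$ of the same exponential shape as $\Psi_t$ in Theorem~\ref{multi}, built from $w^i$, $w^{i,\Delta}$, $X^{i,N}$ and $X^{i,\Delta}$. Apply It\^o's formula to $\Psi^{\Delta}_t\big(\tfrac1N\sum_i|e^i_t|^2+\tfrac1N\sum_i|\varepsilon^i_t|^2\big)$, and split every coefficient increment through the grid value $\theta(t,\Delta)$,
$$
\phi\big(X^{i,N}_t,\pi^N_t\big)-\phi\big(X^{i,\Delta}_{\theta(t,\Delta)},\pi^{N,\Delta}_{\theta(t,\Delta)}\big)=\underbrace{\phi\big(X^{i,N}_t,\pi^N_t\big)-\phi\big(X^{i,N}_{\theta(t,\Delta)},\pi^N_{\theta(t,\Delta)}\big)}_{\text{time regularity}}+\underbrace{\phi\big(X^{i,N}_{\theta(t,\Delta)},\pi^N_{\theta(t,\Delta)}\big)-\phi\big(X^{i,\Delta}_{\theta(t,\Delta)},\pi^{N,\Delta}_{\theta(t,\Delta)}\big)}_{\text{Lipschitz in }(e,\varepsilon)}
$$
for $\phi\in\{b,\sigma\}$ and, in the weight equations, for $\phi\in\{H,M^{\mathrm T}\}$ (note $\theta(t,\Delta)$ is always a grid point, so $\pi^{N,\Delta}_{\theta(t,\Delta)}$ has total mass $1$, and recall $H,M$ are bounded and Lipschitz since $h$ is). The Lipschitz bracket yields $K\big(|e^i_{\theta(t,\Delta)}|+W_1(\pi^N_{\theta(t,\Delta)},\pi^{N,\Delta}_{\theta(t,\Delta)})\big)$; estimating $W_1(\pi^N_{\theta(t,\Delta)},\pi^{N,\Delta}_{\theta(t,\Delta)})\le\sum_k w^k_{\theta(t,\Delta)}|e^k_{\theta(t,\Delta)}|+\sum_k|\varepsilon^k_{\theta(t,\Delta)}|\,|X^{k,\Delta}_{\theta(t,\Delta)}-x|$ and applying Cauchy--Schwarz produces exactly the degenerate--weight cross terms $N\sum_k(|w^k|^2+|w^{k,\Delta}|^2)(1+|X^{k,\Delta}|^2)(|e^k|^2+|\varepsilon^k|^2)$ that the drift of $\D\Psi^{\Delta}_t$ is engineered to dominate once $\alpha$ is large; this is the point at which the non--Lipschitz coupling is paid for.

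The time--regularity bracket is a genuine forcing term, bounded by $K\big(|X^{i,N}_t-X^{i,N}_{\theta(t,\Delta)}|+W_1(\pi^N_t,\pi^N_{\theta(t,\Delta)})\big)$ and involving only \eqref{particle}. Boundedness of $b,\sigma$ gives $\E^{\Q}|X^{i,N}_t-X^{i,N}_{\theta(t,\Delta)}|^2\le C\Delta$. For the measure part, write $w^k_t=w^k_{\theta(t,\Delta)}Z^k_t$ with $Z^k$ a stochastic exponential whose integrands ($H$ and $M$) are bounded: conditioning on $\mathcal{F}_{\theta(t,\Delta)}$ decouples the first--order factor $w^k_{\theta(t,\Delta)}$, whose $\Q$--mean is $1/N$ by exchangeability, from the increments on $[\theta(t,\Delta),t]$; combined with the $\mathcal O(\sqrt\Delta)$ $L^2$--continuity of $s\mapsto X^{k,N}_s$ and $s\mapsto Z^k_s$ and the uniform bound $\sup_{s\le T}\E^{\Q}\!\int|y-x|^2\,\pi^N_s(\D y)\le C$ (a weight--balance estimate of Section~3 type, resting on $\sum_k w^k_s=1$ and $\E^{\Q}[(w^k_s)^2]\le CN^{-2}$), this yields $\E^{\Q}\big[\Psi^{\Delta}_t\,W_1(\pi^N_t,\pi^N_{\theta(t,\Delta)})^2\big]\le C\sqrt\Delta$ with $C$ independent of $N$. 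A Gr\"onwall argument — using $\Psi^{\Delta}\le1$ to discard the martingale terms and the drift of $\Psi^{\Delta}$ to swallow the cross terms — then gives
$$
\E^{\Q}\Big[\max_{s\in[0,T]}|e^i_s|^2\,\Psi^{\Delta}_s\Big]+\E^{\Q}\Big[\max_{s\in[0,T]}|\varepsilon^i_s|^2\,\Psi^{\Delta}_s\Big]\le C\sqrt\Delta,\qquad C=C(K,T,n).
$$

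It remains to remove the multiplier, exactly as in the proof of Corollary~\ref{coro2.1}. Since $s\mapsto\Psi^{\Delta}_s$ is nonincreasing and $-\ln\Psi^{\Delta}_T$ has $\Q$--moments bounded uniformly in $N$ and $\Delta$, Markov's inequality gives $\Q[\Psi^{\Delta}_T<e^{-\beta}]\le C\beta^{-1}$; splitting $\E^{\Q}[\max_s|e^i_s|^2]$ over $\{\Psi^{\Delta}_T\ge e^{-\beta}\}$ (where $\max_s|e^i_s|^2\le e^{\beta}\max_s(|e^i_s|^2\Psi^{\Delta}_s)$) and its complement (where a H\"older estimate and the uniform higher moments of $\max_s|e^i_s|$ — available because the coefficients are bounded — apply) yields $\E^{\Q}[\max_s|e^i_s|^2]\le C\big(e^{\beta}\sqrt\Delta+\beta^{-1/(2p)}\big)$ for every $p>1$, and the choice $\beta\asymp\ln(\Delta^{-1})$ gives the rate $[\ln\Delta^{-1}]^{-1/(2p)}$. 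Combined with the first step this proves the first assertion; the case $\Delta=T/N$ then follows immediately, since $\ln(\Delta^{-1})=\ln N-\ln T\ge\tfrac12\ln N$ for $N$ large. The main obstacle is, as for Theorem~\ref{multi}, the non--Lipschitz dependence of the weighted empirical measure on the possibly degenerate weights: it forces the multiplier into the estimate, and since $(\Psi^{\Delta})^{-1}$ is only logarithmically integrable the natural $\mathcal O(\sqrt\Delta)$ Euler rate degrades into the logarithmic one above. A secondary, more technical point is to verify that every constant arising en route — in particular those hidden in $\E^{\Q}[\Psi^{\Delta}_t W_1(\pi^N_t,\pi^N_{\theta(t,\Delta)})^2]$ and in the moments of $-\ln\Psi^{\Delta}_T$ — is genuinely independent of $N$, i.e.\ that the auxiliary estimates of Section~3 transfer, with uniform constants, to the Euler--discretized weights $(w^{i,\Delta})_i$.
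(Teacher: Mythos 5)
Your proposal is correct and follows essentially the same route as the paper: insert the continuous particle system \eqref{particle} as an intermediate layer, treat the pure time-discretization error $X^{i,N}_s-X^{i,\Delta}_s$ via the same multiplier argument used for Theorem~\ref{multi}, remove the multiplier exactly as in Corollary~\ref{coro2.1}, and combine with Corollary~\ref{coro2.1} for the $\bar X^i$--$X^{i,N}$ piece by the triangle inequality. One small quantitative slip worth noting: since Lemma~\ref{lemma4} already yields $\E^\Q\big[W_1(\pi^N_t,\pi^N_{\theta(t,\Delta)})^2\big]\le C\Delta$ and $\Psi^\Delta\le 1$, the pre-multiplier estimate is $\E^\Q\big[\max_s|X^{i,N}_s-X^{i,\Delta}_s|^2\Psi^\Delta_s\big]\le C\Delta$ as in \eqref{EMx}, not $C\sqrt{\Delta}$; this does not affect the final logarithmic rate, which comes entirely from the multiplier-removal step.
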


\section{Auxiliary lemmas}
\subsection{Estimates for the weight of empirical measure}
Suppose $\{\tilde{X}_t^i\}_{i=1}^N$ is a solution of the following stochastic equation: for all $1\leq i\leq N$,
\be\label{sde2}
\begin{cases}
\D \tilde{X}_t^i=b(\tilde{X}_t^i,\tilde{\pi}_t)\D t+\sigma(\tilde{X}_t^i,\tilde{\pi}_t)\D W_t^i, \qquad  \tilde{X}_0^i=x,\\  
\D \tilde{L}_t^i=h(\tilde{X}_t^i,\tilde{\pi}_t)\tilde{L}_t^i\D Y_t,\qquad \tilde{L}_0^i=1,
\end{cases}
\ee
where $\tilde{\pi}_t\in \mathcal{P}_1(\mathbb{R}^n)$ is a given random probability measure. For simplicity, we write $(X^{i},L^{i},w^i)$ instead of $(X^{i,N},L^{i,N},w^{i,N})$ in (\ref{particle}) and denote 
\begin{align}\label{tilde_pi}
 \tilde{\pi}^N=\frac{\sum_{i=1}^N  \tilde{L}^i\delta_{\tilde{X}^i}}{\sum_{i=1}^N  \tilde{L}^i}, \quad z^i=\ln(w^i) \quad\text{and} \quad\tilde{z}^i=\ln(\tilde{w}^i).   
\end{align}
Then we have the following estimate of 1-Wasserstein measure.
\begin{lemma}\label{lemma1}
  It holds that
    \begin{align}\nonumber
        \displaystyle W_1\big(\pi^N,\tilde{\pi}^N\big)&\leq \displaystyle \sum\limits_{i=1}^N\big|w^i-\tilde{w}^i\big|\big|X^i\big|+\sum\limits_{i=1}^N \tilde{w}^i\big|X^i-\tilde{X}^i\big| \\
         &\leq \displaystyle \sum\limits_{i=1}^N w^i\vee\tilde{w}^i\big|z^i-\tilde{z}^i\big|\big|X^i\big|+\sum\limits_{i=1}^N \tilde{w}^i\big|X^i-\tilde{X}^i\big|.\nonumber
       \end{align}
\end{lemma}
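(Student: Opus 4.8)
For $\mathbb{Q}$-almost every $\omega$, both $\pi^N$ and $\tilde{\pi}^N$ are finitely supported probability measures on $\mathbb{R}^n$, hence belong to $\mathcal{P}_1(\mathbb{R}^n)$, and we may use the Kantorovich--Rubinstein duality $W_1(\mu,\nu)=\sup\{\int f\,\D\mu-\int f\,\D\nu:\ \mathrm{Lip}(f)\le 1\}$ (equivalently, the bounds below can be read off from an explicit transport plan). Fix a $1$-Lipschitz $f$. Since $\sum_{i=1}^N w^i=\sum_{i=1}^N\tilde{w}^i=1$, I would write
\[
\int f\,\D\pi^N-\int f\,\D\tilde{\pi}^N
=\sum_{i=1}^N (w^i-\tilde{w}^i)\bigl(f(X^i)-f(0)\bigr)+\sum_{i=1}^N \tilde{w}^i\bigl(f(X^i)-f(\tilde{X}^i)\bigr),
\]
where inserting $f(0)$ in the first sum is legitimate precisely because $\sum_i(w^i-\tilde{w}^i)=0$. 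Bounding $|f(X^i)-f(0)|\le|X^i|$ and $|f(X^i)-f(\tilde{X}^i)|\le|X^i-\tilde{X}^i|$ and taking the supremum over $f$ yields the first inequality of the lemma. (The same split corresponds to interpolating through $\sum_i\tilde{w}^i\delta_{X^i}$, controlling the weight mismatch by the first term and the diagonal coupling by the second.)

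\textbf{Second inequality.} It suffices to establish, pathwise and for each $i$, the elementary bound $|w^i-\tilde{w}^i|\le(w^i\vee\tilde{w}^i)\,|z^i-\tilde{z}^i|$, after which one substitutes it into the first inequality. Since $L^i$ and $\tilde{L}^i$ solve the linear SDEs $\D L=h L\,\D Y$ with $L_0=1$ and $h$ is bounded, they are strictly positive (of exponential form), so the normalized weights lie in $(0,1]$ and $z^i=\ln(w^i)$, $\tilde{z}^i=\ln(\tilde{w}^i)$ are well defined. Writing $w^i=e^{z^i}$, $\tilde{w}^i=e^{\tilde{z}^i}$ and applying the mean value theorem to $t\mapsto e^t$ gives $|e^{z^i}-e^{\tilde{z}^i}|=e^{\xi^i}|z^i-\tilde{z}^i|$ for some $\xi^i$ between $z^i$ and $\tilde{z}^i$, and $e^{\xi^i}\le e^{z^i\vee\tilde{z}^i}=w^i\vee\tilde{w}^i$.

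\textbf{Expected difficulty.} This lemma is essentially routine, so I do not anticipate a genuine obstacle; the only two points that require a line of care are (i) the strict positivity of the weights, which legitimizes the logarithmic reformulation $z^i=\ln(w^i)$ and is inherited from the exponential form of the solutions of the linear equations for $L^i$, and (ii) the use of the normalization $\sum_i w^i=\sum_i\tilde{w}^i=1$ to subtract $f(0)$ in the duality bound, without which one could not replace $f(X^i)$ by $|X^i|$.
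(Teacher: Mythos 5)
Your proof is correct and follows essentially the same route as the paper: both decompose the comparison through the interpolating measure $\underline{\pi}^N=\sum_i\tilde{w}^i\delta_{X^i}$ (the paper does this via the triangle inequality for $W_1$, you do it directly inside the Kantorovich--Rubinstein duality, but as you note this is the same split), and both conclude the second inequality from $|e^x-e^y|\le e^{x\vee y}|x-y|$. Your write-up is actually a bit more careful than the paper's in one respect: you make explicit the step of subtracting $f(0)$, justified by $\sum_i(w^i-\tilde{w}^i)=0$, which is needed to turn $\bigl|\sum_i(w^i-\tilde{w}^i)f(X^i)\bigr|$ into $\sum_i|w^i-\tilde{w}^i|\,|X^i|$; the paper applies Kantorovich--Rubinstein to $W_1(\pi^N,\underline{\pi}^N)$ and records the resulting bound without spelling this out (and also has a harmless typo, writing $\inf$ rather than $\sup$ in the duality formula).
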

\begin{proof}
Define $$\underline{\pi}^N=\frac{\sum_{i=1}^N \tilde{L}^i\delta_{X^i}}{\sum_{i=1}^N \tilde{L}^i}.$$ By triangle inequality of Wasserstein space and Kantorovich–Rubinstein formula, we have
\begin{align}
   \displaystyle W_1\big(\pi^N,\tilde{\pi}^N\big)\leq &W_1\big(\pi^N,\underline{\pi}^N\big)+W_1\big(\underline{\pi}^N,\tilde{\pi}^N\big) \nonumber\\
    =&\displaystyle\inf_{\phi\in \text{Lip}(1)}\bigg\{ \bigg|\int \phi(x)\pi^N(\D x)-\int \phi(x)\underline{\pi}^N(\D x)\bigg|\bigg\}\nonumber\\
    &+\inf_{\phi\in \text{Lip}(1)}\bigg\{ \bigg|\int \phi(x)\underline{\pi}^N(\D x)-\int \phi(x)\tilde{\pi}^N(\D x)\bigg|\bigg\}\nonumber\\
    \leq&\displaystyle \sum\limits_{i=1}^N \big|w^i-\tilde{w}^i\big|\big|X^i\big|+\sum\limits_{i=1}^N  \tilde{w}^i\big|X^i-\tilde{X}^i\big|,\nonumber
\end{align}
where the notation $\text{Lip}(1)$ is the space of all Lipschitz functions with Lipschitz constant 1. 

Using the fact $|e^x-e^y|\leq e^{x\vee y}|x-y|$, it yields that 
\be\nonumber
    \big|w^i-\tilde{w}^i\big|\big|X^i\big|\leq w^i\vee\tilde{w}^i\big|z^i-\tilde{z}^i\big|\big|X^i\big|,
\ee
then the proof of Lemma \ref{lemma1} is completed.
\end{proof}

 Recalling the definitions of $M$ and $H$ in (\ref{2312253}), it is easy to see that $M$ and $H$ are bounded functions. Using Ito’s formula, we can get 
\be\label{Ito}
 \D w_t^i= w_t^i H(X_t^i,\pi_t^N)\D t + w_t^i M^{\text{T}}(X_t^i,\pi_t^N)\D Y_t.
\ee
By Ito’s formula (\ref{Ito}), we have
\be\nonumber
\D z_t^i=R(X_t^i,\pi_t^N)\D t+M^{\text{T}}(X_t^i,\pi_t^N)\D Y_t,
\ee
 where $$R(x,\pi)=H(x,\pi)-\frac{1}{2}|M(x,\pi)|^2.$$
Recalling the boundedness and Lipshitz continuity of $h$, we can observe that $R$ and $M$ is bounded and 
\begin{align}\label{LipH}
\left|R(x,\pi)-R(x',\pi')\right|+\left|M(x,\pi)-M(x',\pi')\right|\leq C\left(|x-x'|+W_1\big(\pi,\pi'\big)\right).    
\end{align}
where $C=C(K)$ is a constant.

The following estimate is significant for our proof of uniqueness. 

\begin{lemma}\label{lemma3}
Denote 
$$
\Gamma_t=\exp\bigg(\int_0^t\gamma_s\D s\bigg),
$$ 
then for all $t\in[0,T]$, it holds that
\begin{align*}
\big|X_t^{i}-\tilde{X}_t^{i}\big|^2\Gamma_t
\leq & C\int_0^t W_1\left(\pi_s^N,\tilde{\pi}_s\right)^2\Gamma_s\D s+\int_0^t\big(C+\gamma_s\big)\big|X_s^{i}-\tilde{X}_s^{i}\big|^2\Gamma_s\D s\nonumber\\
&+C\int_0^t\Gamma_s\Big( X_s^{i}-\tilde{X}_s^{i}\Big)^{\text{T}} \Big(\sigma(X_s^i,\pi_s^N)-\sigma(\tilde{X}_s^i,\tilde{\pi_s})\Big)\D W_s^i .
\end{align*}
and
\begin{align}
\big|z_t^{i}-\tilde{z}_t^{i} \big|^2\Gamma_t\leq& C\int_0^t \big| X_s^{i}-\tilde{X}_s^{i}\big|^2 \Gamma_s + W_1\left(\pi_s^N,\tilde{\pi}_s\right)^2\Gamma_s\D s+\int_0^t\big(C+\gamma_s\big)\big|z_s^{i}-\tilde{z}_s^{i}\big|^2\Gamma_s\D s\nonumber\\
    &+C\,\int_0^t\Gamma_s\Big(z_s^{i}-\tilde{z}_s^{i} \Big)\Big(M(X_s^i,\pi_s^N)-M(\tilde{X}_s^i,\tilde{\pi_s}) \Big)^{\text{T}}\D Y_s,\nonumber
\end{align}
where $C=C(K)$ is a constant.
\end{lemma}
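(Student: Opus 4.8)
\emph{Proof strategy.} The plan is to obtain both inequalities through the same two-step Ito computation: write the SDE solved by the relevant difference process, apply Ito's formula to its squared modulus, and then apply the Ito product rule against the finite-variation multiplier $\Gamma_t$ (noting that $\D\Gamma_t=\gamma_t\Gamma_t\,\D t$). Since the two displayed inequalities are pathwise statements that retain the stochastic integral on the right-hand side, no expectations, localization, or Burkholder--Davis--Gundy estimates are needed; the only analytic inputs are the global Lipschitz bounds on $(b,\sigma)$ from the standing assumption, the Lipschitz estimate~\eqref{LipH} for $(R,M)$, and Young's inequality $2a^{\text{T}}c\le|a|^2+|c|^2$. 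All numerical constants produced along the way are absorbed into the generic $C=C(K)$.

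For the first inequality I would subtract the equation for $X^i$ in~\eqref{particle} (recall that $\sigma_2=0$, so its drift is $b$) from the equation for $\tilde{X}^i$ in~\eqref{sde2}; since both are driven by the \emph{same} Brownian motion $W^i$, the difference solves
\[
\D\bigl(X_t^i-\tilde{X}_t^i\bigr)=\bigl(b(X_t^i,\pi_t^N)-b(\tilde{X}_t^i,\tilde{\pi}_t)\bigr)\,\D t+\bigl(\sigma(X_t^i,\pi_t^N)-\sigma(\tilde{X}_t^i,\tilde{\pi}_t)\bigr)\,\D W_t^i .
\]
Applying Ito's formula to $|X_t^i-\tilde{X}_t^i|^2$, then the product rule with $\Gamma_t$, and using $X_0^i=\tilde{X}_0^i=x$, the $\D s$-integrand becomes $\Gamma_s$ times
\[
2\bigl(X_s^i-\tilde{X}_s^i\bigr)^{\text{T}}\bigl(b(X_s^i,\pi_s^N)-b(\tilde{X}_s^i,\tilde{\pi}_s)\bigr)+\bigl|\sigma(X_s^i,\pi_s^N)-\sigma(\tilde{X}_s^i,\tilde{\pi}_s)\bigr|^2+\gamma_s\bigl|X_s^i-\tilde{X}_s^i\bigr|^2,
\]
while the martingale part is $2\int_0^t\Gamma_s(X_s^i-\tilde{X}_s^i)^{\text{T}}(\sigma(X_s^i,\pi_s^N)-\sigma(\tilde{X}_s^i,\tilde{\pi}_s))\,\D W_s^i$. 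Bounding the first two summands by the $K$-Lipschitz continuity of $b,\sigma$ and Young's inequality gives, pathwise, a dominating term $(C+\gamma_s)|X_s^i-\tilde{X}_s^i|^2\Gamma_s+C\,W_1(\pi_s^N,\tilde{\pi}_s)^2\Gamma_s$, which after integrating from $0$ to $t$ is exactly the claimed first inequality.

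For the second inequality I would proceed identically, starting from the identity $\D z_t^i=R(X_t^i,\pi_t^N)\,\D t+M^{\text{T}}(X_t^i,\pi_t^N)\,\D Y_t$ recorded just before the lemma, together with the companion identity $\D\tilde{z}_t^i=R(\tilde{X}_t^i,\tilde{\pi}_t)\,\D t+M^{\text{T}}(\tilde{X}_t^i,\tilde{\pi}_t)\,\D Y_t$ obtained in the same way by applying Ito's formula to $\tilde{w}^i=\tilde{L}^i/\sum_j\tilde{L}^j$. Both $z^i$ and $\tilde{z}^i$ are scalar and are driven by the common observation $Y$ (a $\Q$-Brownian motion), and $z_0^i=\tilde{z}_0^i=\ln(1/N)$; hence Ito's formula for $|z_t^i-\tilde{z}_t^i|^2$ followed by the product with $\Gamma_t$ produces a $\D s$-integrand equal to $\Gamma_s$ times
\[
2\bigl(z_s^i-\tilde{z}_s^i\bigr)\bigl(R(X_s^i,\pi_s^N)-R(\tilde{X}_s^i,\tilde{\pi}_s)\bigr)+\bigl|M(X_s^i,\pi_s^N)-M(\tilde{X}_s^i,\tilde{\pi}_s)\bigr|^2+\gamma_s\bigl|z_s^i-\tilde{z}_s^i\bigr|^2,
\]
plus the martingale term $2\int_0^t\Gamma_s(z_s^i-\tilde{z}_s^i)(M(X_s^i,\pi_s^N)-M(\tilde{X}_s^i,\tilde{\pi}_s))^{\text{T}}\,\D Y_s$. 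Invoking~\eqref{LipH} and Young's inequality on the cross term dominates the first two summands by $(C+\gamma_s)|z_s^i-\tilde{z}_s^i|^2\Gamma_s+C\bigl(|X_s^i-\tilde{X}_s^i|^2+W_1(\pi_s^N,\tilde{\pi}_s)^2\bigr)\Gamma_s$, and integrating yields the second inequality.

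Since every step reduces to a direct Ito computation followed by Young's inequality, I do not foresee a genuine obstacle; the two points that call for care are (i) keeping the multiplier contribution $\gamma_s|\cdot|^2\Gamma_s$ intact rather than estimating it away --- this is precisely the term that later permits a suitable choice of the multiplier $\gamma$ to absorb the troublesome $W_1$-contribution --- and (ii) making sure the diffusion coefficients entering the two difference processes come from the \emph{same} driving noises ($W^i$ for the state, $Y$ for the log-weight), so that no extra cross terms appear in the quadratic variation and the martingale parts are exactly the single stochastic integrals displayed in the statement.
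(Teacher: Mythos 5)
Your proposal matches the paper's proof exactly: both apply Itô's formula to $|X^i-\tilde X^i|^2\Gamma_t$ (equivalently, product rule with $\Gamma$), then dominate the finite-variation integrand by Young's inequality plus the Lipschitz bounds on $(b,\sigma)$ (and, for the log-weight, the bound~\eqref{LipH} on $(R,M)$), keeping the $\gamma_s$-term intact. The paper merely compresses the second estimate to "similar," which you correctly unpack; your remarks about why no BDG/localization is needed and why the $\gamma_s$-term must be preserved are accurate and consistent with how the lemma is used downstream.
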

\begin{proof}
For simplicity, we denote 
$$
\Delta \tilde{X}_t^i=X_t^i-\tilde{X}_t^i, \quad \Delta \tilde{b}_t^i=b(X_t^i,\pi_t^N)-b(\tilde{X}_t^i,\tilde{\pi_t}), \quad \Delta \tilde{\sigma}_t^i=\sigma(X_t^i,\pi_t^N)-\sigma(\tilde{X}_t^i,\tilde{\pi_t}),
$$
Using Itô’s formula, we obtain
\begin{align}
  \nonumber  \D \big|\Delta \tilde{X}_t^i\big|^2\Gamma_t= &2\Big(\Delta \tilde{X}_t^i\Big)^{\text{T}} \Delta\tilde{\sigma}_t^i\Gamma_t\D W_t^i+2 \Big(\Delta \tilde{X}_t^i\Big)^{\text{T}} \Delta \tilde{b}_t^i\Gamma_t\D t \\
    &\nonumber+ \Tr\left(\left(\Delta\tilde{\sigma}_t^i\right)^{\text{T}}\Delta\tilde{\sigma}_t^i\right)\Gamma_t\D t +\gamma_t\big|\Delta \tilde{X}_t^i\big|^2\Gamma_t\D t.
\end{align}
With the fact that $2ab\leq a^2+b^2$, we have
\begin{align}\label{unique-34}
\big|\Delta \tilde{X}_t^i\big|^2\Gamma_t
\leq & C\int_0^t \big|\Delta \tilde{X}_s^i\big|^2 \Gamma_s + \big|\Delta \tilde{b}_s^i\big|^2\Gamma_s+ \Tr\left(\left(\Delta\tilde{\sigma}_s^i\right)^{\text{T}}\Delta\tilde{\sigma}_s^i\right)\Gamma_s\D s\nonumber\\
&+C\int_0^t\Big(\Delta \tilde{X}_s^i\Big)^{\text{T}} \Delta\tilde{\sigma}_s^i\Phi_s\D W_s^i +\int_0^t\gamma_s\big|\Delta \tilde{X}_s^i\big|^2\Gamma_s\D s.
\end{align}
Combining with (\ref{unique-34}) and the Lipschitz continuity of $(b,\sigma)$, we complete our first estimate in Lemma \ref{lemma3}. The estimate of $\big|z_t^i- \tilde{z}_t^i\big|^2\Gamma_t$ is similar because of the Lipschtiz continuity of $(R,H)$.
\end{proof}

Next lemma shows that $\pi_t^N$ is continuous in probability.
\begin{lemma}\label{lemma4}
Suppose that there exists $C=C(K,T)$ satisfying for all $1\leq i\leq N$
\be
\E^{\Q}\Big[\max\limits_{t\in[0,T]}\big|\tilde{X}_t^i\big|^4+N^4\big|\tilde{w}_t^i\big|^4\Big]\leq C\quad\text{and}\quad \E^{\Q}\Big[\big|\tilde{X}_t^i-\tilde{X}_s^i\big|^4+N^4\big|\tilde{w}_t^i-\tilde{w}_s^i\big|^4\Big]\leq C|t-s|^2.\nonumber
\ee
Then for $\tilde{\pi}^N$ defined in (\ref{tilde_pi}), it holds that
\be\nonumber \E^{\mathbb{Q}}\Big[W_1\big(\tilde{\pi}_t^N,\tilde{\pi}_s^N\big)^2\Big]\leq C\,|t-s|, \quad\forall \, s,t\in[0, T],
\ee 
where $C=C(K,T)$.
\end{lemma}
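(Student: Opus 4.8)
The plan is to bound $W_1(\tilde\pi_t^N,\tilde\pi_s^N)$ from above by an explicit expression in the increments $|\tilde X_t^i-\tilde X_s^i|$ and $|\tilde w_t^i-\tilde w_s^i|$ — exactly in the spirit of Lemma \ref{lemma1} — and then take expectations and invoke the two hypotheses. First I would write $\tilde\pi_t^N=\sum_i\tilde w_t^i\delta_{\tilde X_t^i}$ with $\tilde w_t^i=\tilde L_t^i/\sum_j\tilde L_t^j$, and introduce the intermediate measure $\mu=\sum_{i=1}^N\tilde w_s^i\,\delta_{\tilde X_t^i}$ (same weights as time $s$, positions from time $t$). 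By the triangle inequality in $\mathcal P_1$,
\begin{align*}
W_1\big(\tilde\pi_t^N,\tilde\pi_s^N\big)\le W_1\big(\tilde\pi_t^N,\mu\big)+W_1\big(\mu,\tilde\pi_s^N\big).
\end{align*}
The first term is at most $\sum_i|\tilde w_t^i-\tilde w_s^i|\,|\tilde X_t^i|$ by the Kantorovich–Rubinstein duality (testing against $1$-Lipschitz functions, as in the proof of Lemma \ref{lemma1}), and the second term is at most $\sum_i\tilde w_s^i\,|\tilde X_t^i-\tilde X_s^i|$ by using the obvious coupling that pairs the $i$-th atoms. Hence
\begin{align*}
W_1\big(\tilde\pi_t^N,\tilde\pi_s^N\big)\le \sum_{i=1}^N\big|\tilde w_t^i-\tilde w_s^i\big|\,\big|\tilde X_t^i\big|+\sum_{i=1}^N\tilde w_s^i\,\big|\tilde X_t^i-\tilde X_s^i\big|.
\end{align*}

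Next I would square this, use $(a+b)^2\le 2a^2+2b^2$, and then Cauchy–Schwarz on each sum. For the first sum, $\big(\sum_i|\tilde w_t^i-\tilde w_s^i||\tilde X_t^i|\big)^2\le N\sum_i|\tilde w_t^i-\tilde w_s^i|^2|\tilde X_t^i|^2$; taking $\E^{\Q}$ and applying Cauchy–Schwarz in expectation together with the moment bounds $\E^{\Q}[N^4|\tilde w_t^i-\tilde w_s^i|^4]\le C|t-s|^2$ and $\E^{\Q}[|\tilde X_t^i|^4]\le C$ gives a bound of order $N\cdot N\cdot\frac{1}{N^2}\sqrt{C|t-s|^2}\cdot\sqrt C=C|t-s|$ per term, i.e. $C|t-s|$ after summing the $N$ terms and dividing by the $N^2$ normalization — here one must be careful to track that $\tilde w_t^i$ is $O(1/N)$, which is precisely why the hypothesis is stated with the $N^4$ weights. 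For the second sum, since $\sum_i\tilde w_s^i=1$ the weights form a probability vector, so by Jensen $\big(\sum_i\tilde w_s^i|\tilde X_t^i-\tilde X_s^i|\big)^2\le\sum_i\tilde w_s^i|\tilde X_t^i-\tilde X_s^i|^2\le\sum_i|\tilde X_t^i-\tilde X_s^i|^2$ (using $\tilde w_s^i\le1$), and one more application of Cauchy–Schwarz in expectation with $\E^{\Q}[|\tilde w_s^i|^4]\le C/N^4$ and $\E^{\Q}[|\tilde X_t^i-\tilde X_s^i|^4]\le C|t-s|^2$, or simply the bound $\tilde w_s^i\le 1$ combined directly with the increment moment bound, yields $C|t-s|$. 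Collecting the two contributions gives $\E^{\Q}[W_1(\tilde\pi_t^N,\tilde\pi_s^N)^2]\le C|t-s|$ with $C=C(K,T)$ independent of $N$.

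The step I expect to require the most care is the bookkeeping of the $N$-dependence in the first sum: naively $\sum_i|\tilde w_t^i-\tilde w_s^i||\tilde X_t^i|$ has $N$ summands each of size $O(1/N)$ times $O(1)$, so the sum is $O(1)$ but not obviously $o(1)$ in $|t-s|$ — the gain comes entirely from the increment estimate $\E^{\Q}[N^4|\tilde w_t^i-\tilde w_s^i|^4]\le C|t-s|^2$, and one must apply Cauchy–Schwarz in the right order (first inside the sum against $\sqrt N$, then Cauchy–Schwarz in $\E^{\Q}$ to split the weight increment from the position) so that the powers of $N$ cancel exactly and no spurious factor of $N$ survives. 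Everything else — the triangle inequality in $\mathcal P_1$, the Kantorovich–Rubinstein duality, and Jensen's inequality for the probability-weighted sum — is routine and parallels the proof of Lemma \ref{lemma1}.
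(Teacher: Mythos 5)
Your proposal follows essentially the same route as the paper: decompose $W_1(\tilde\pi_t^N,\tilde\pi_s^N)$ via the intermediate measure and Kantorovich--Rubinstein (equivalently, by applying Lemma~\ref{lemma1}), square, apply Cauchy--Schwarz, and invoke the fourth-moment hypotheses while tracking powers of $N$ so that the $O(1/N^2)$ size of $|\tilde w^i|^2$ cancels the $N^2$ terms produced by the double sum. The main line of your argument matches the paper's proof nearly step for step and is correct; your opening paragraph correctly identifies that the $N^4$-weighting of the hypothesis is exactly what makes the cancellation work.

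One slip worth flagging in your treatment of the second sum. After Jensen you write
$\bigl(\sum_i\tilde w_s^i|\tilde X_t^i-\tilde X_s^i|\bigr)^2\le\sum_i\tilde w_s^i|\tilde X_t^i-\tilde X_s^i|^2\le\sum_i|\tilde X_t^i-\tilde X_s^i|^2$,
and one of your two suggested finishes --- ``simply the bound $\tilde w_s^i\le1$ combined directly with the increment moment bound'' --- then yields only
$\E^{\Q}\bigl[\sum_i|\tilde X_t^i-\tilde X_s^i|^2\bigr]\le CN|t-s|$,
which carries a spurious factor of $N$. The crude bound $\tilde w_s^i\le1$ throws away precisely the $O(1/N)$ smallness of the weight that the hypothesis supplies; once the weight is gone from the sum you have no way to cancel the $N$ summands. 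You must stop at $\sum_i\tilde w_s^i|\tilde X_t^i-\tilde X_s^i|^2$ (or, as the paper does, use $\bigl(\sum_i a_ib_i\bigr)^2\le\sum_i a_i^2\sum_j b_j^2$) and then apply Cauchy--Schwarz in expectation termwise, pairing $\E^{\Q}[|\tilde w_s^i|^4]^{1/2}\le C/N^2$ against $\E^{\Q}[|\tilde X_t^j-\tilde X_s^j|^4]^{1/2}\le C|t-s|$. That is the other alternative you offer, and it is the one that actually closes the estimate; the ``$\tilde w_s^i\le1$'' shortcut does not.
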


\begin{proof}
According to Lemma \ref{lemma1}, 
\be\nonumber
W_1\big(\tilde{\pi}_t^N,\tilde{\pi}_s^N\big)\leq \sum\limits_{i=1}^N \tilde{w}_s^i\big|\tilde{X}_t^i-\tilde{X}_s^i\big|+\sum\limits_{i=1}^N\big|\tilde{X}_t^i\big|\big|\tilde{w}_t^i-\tilde{w}_s^i\big|.
\ee
Consequently, it follows that  
\begin{align}\label{lemma4.1}
\E^{\Q}\Big[W_1\big(\tilde{\pi}_t^N,\tilde{\pi}_s^N\big)^2\Big]\leq & C\E^{\Q}\bigg[\bigg(\sum\limits_{i=1}^N \tilde{w}_s^i\big|\tilde{X}_t^i-\tilde{X}_s^i\big|\bigg)^2\bigg]+C\E^{\Q}\bigg[\bigg(\sum\limits_{i=1}^N\big|\tilde{X}_t^i\big|\big|\tilde{w}_t^i-\tilde{w}_s^i\big|\bigg)^2\bigg]\nonumber\\
\leq& C\E^{\Q}\bigg[\sum\limits_{i=1}^N\big|\tilde{w}_s^i\big|^2\sum\limits_{j=1}^N\big|\tilde{X}_t^j-\tilde{X}_s^j\big|^2\bigg]+C\E^{\Q}\bigg[\sum\limits_{i=1}^N\big|\tilde{X}_t^i\big|^2\sum\limits_{j=1}^N\big|\tilde{w}_t^j-\tilde{w}_s^j\big|^2\bigg]\nonumber\\
\leq& C\sum\limits_{1\leq i,j\leq N}\E^{\Q}\Big[\big|\tilde{w}_s^i\big|^4\Big]^{\frac{1}{2}}\E^{\Q}\Big[\big|\tilde{X}_t^j-\tilde{X}_s^j\big|^4\Big]^{\frac{1}{2}}
\nonumber\\
& +C\sum\limits_{1\leq i,j\leq N}\E^{\Q}\Big[\big|\tilde{X}_t^i\big|^4\Big]^{\frac{1}{2}}\E^{\Q}\Big[\big|\tilde{w}_t^j-\tilde{w}_s^j\big|^4\Big]^{\frac{1}{2}}.
\end{align}
By our conditions, we can conclude the proof of the lemma.
\end{proof}

\subsection{Wasserstein convergence of weighted empirical measures}
The convergence of
unweighted empirical measures in the Wasserstein distance has been extensively investigated in the literature. Here we follow the idea of J. Horowitz and R. Karandikar (\cite{Horowitz}), which helps us to obtain the following mean convergence rate of the Wasserstein space first.
\begin{lemma}\label{convergence rate}
It holds that
\be\nonumber
\max\limits_{t\in[0,T]} \E^{\Q}\left[ W_1\left(\bar{\pi}_t^{N},\pi_t\right)^4\right]\leq CN^{-\frac{16}{7n+8}},
\ee 
where $C=C(K,T,n)$, $n$ is the dimension of the signal process, $\pi_t^N$ is the empirical measure defined in~\eqref{particle} and $\pi_t$ is the original measure defined in~\eqref{2312261}.
\end{lemma}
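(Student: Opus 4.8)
The plan is to exploit that, under the reference measure $\Q$, the observation $Y$ is a Brownian motion and the true filter $\pi_s$ is $\mathcal F^Y_s$-measurable, so that conditionally on $\mathcal F^Y_t$ the synchronous particles $(\bar X^i_t,\bar L^i_t)_{1\le i\le N}$ are i.i.d. Applying the Kallianpur--Striebel identity to a single copy gives $\pi_t=\rho_t/\rho_t(\mathbb R^n)$, where $\rho_t(\cdot):=\E^{\Q}[\bar L^1_t\,\bm 1_{\bar X^1_t\in\,\cdot}\mid\mathcal F^Y_t]$ is a finite random measure, and by exchangeability $\rho_t=\E^{\Q}[\rho^N_t\mid\mathcal F^Y_t]$ for the weighted empirical measure $\rho^N_t:=\frac1N\sum_{i=1}^N\bar L^i_t\,\delta_{\bar X^i_t}$. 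Since $\bar\pi^N_t=\rho^N_t/\rho^N_t(\mathbb R^n)$, the quantity $W_1(\bar\pi^N_t,\pi_t)$ is precisely the error of a self-normalized importance-sampling estimator built from conditionally i.i.d.\ samples, and what is needed is a (conditional) Horowitz--Karandikar-type empirical-measure rate, required only pointwise in $t$ (so no path regularity enters).

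The first step is to collect moment bounds: for every $q\ge1$ there is $C_q=C_q(K,T)$ with $\E^{\Q}\big[|\bar X^i_t|^q+(\bar L^i_t)^q+(\bar L^i_t)^{-q}\big]\le C_q$, uniformly in $i,N$ and $t\in[0,T]$, the bounds on $\bar X^i_t$ following from Burkholder--Davis--Gundy and boundedness of $b,\sigma$, and those on $\bar L^i_t=\exp\big(\int_0^t h(\bar X^i_s,\pi_s)\,\D Y_s-\tfrac12\int_0^t|h|^2\,\D s\big)$ from boundedness of $h$; conditional Jensen then gives $\E^{\Q}[\rho_t(\mathbb R^n)^{-q}]\le\E^{\Q}[(\bar L^1_t)^{-q}]\le C_q$, and a good-event/Markov splitting (exploiting that $\rho^N_t(\mathbb R^n)=\frac1N\sum_i\bar L^i_t$ concentrates around $\rho_t(\mathbb R^n)$) gives $\E^{\Q}[\rho^N_t(\mathbb R^n)^{-q}]\le C_q$ uniformly in $N$. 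The second step removes the self-normalization: for $f\in\mathrm{Lip}(1)$ with $f(0)=0$, writing $m^N=\rho^N_t(\mathbb R^n)$ and $m=\rho_t(\mathbb R^n)$, the identity
\[
\int f\,\D\bar\pi^N_t-\int f\,\D\pi_t=\frac1{m^N}\Big(\int f\,\D\rho^N_t-\int f\,\D\rho_t\Big)+\frac{m-m^N}{m^N m}\int f\,\D\rho_t
\]
together with $\big|\int f\,\D\rho_t\big|\le\E^{\Q}[\bar L^1_t|\bar X^1_t|\mid\mathcal F^Y_t]$ yields
\[
W_1(\bar\pi^N_t,\pi_t)\le\frac{D^N_t}{m^N}+\frac{|m-m^N|}{m^N m}\,\E^{\Q}\big[\bar L^1_t|\bar X^1_t|\mid\mathcal F^Y_t\big],\qquad D^N_t:=\sup_{f\in\mathrm{Lip}(1),\,f(0)=0}\Big|\int f\,\D\rho^N_t-\int f\,\D\rho_t\Big| .
\]
Raising this to the fourth power and applying Hölder together with the moment bounds --- in particular $\E^{\Q}[(m^N)^{-8}]\le C$ and the routine eighth-moment bound $\E^{\Q}[|m-m^N|^8]\le CN^{-4}$ for a normalized sum of conditionally i.i.d.\ variables --- reduces everything to showing $\E^{\Q}[(D^N_t)^8]\le CN^{-32/(7n+8)}$.

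For this, since $D^N_t$ is a Kantorovich--Rubinstein distance between the finite random measures $\rho^N_t$ and $\rho_t$, apply the dyadic device of \cite{Horowitz}: with $\mathcal P_j$ the partition of $\mathbb R^n$ into cubes of side $2^{-j}$ and a truncation ball $B_R$,
\[
D^N_t\le C\sum_{j\ge0}2^{-j}\sum_{Q\in\mathcal P_j,\,Q\cap B_R\neq\emptyset}\big|\rho^N_t(Q)-\rho_t(Q)\big|+\big(\text{contribution of }\mathbb R^n\setminus B_R\big).
\]
On each cube, $\rho^N_t(Q)-\rho_t(Q)=\frac1N\sum_{i=1}^N\big(\bar L^i_t\bm 1_{\bar X^i_t\in Q}-\rho_t(Q)\big)$ is a normalized sum of conditionally i.i.d.\ centred variables, so a Rosenthal/Burkholder inequality bounds $\E^{\Q}[|\rho^N_t(Q)-\rho_t(Q)|^8\mid\mathcal F^Y_t]$ by $CN^{-4}\E^{\Q}[(\bar L^1_t)^2\bm 1_{\bar X^1_t\in Q}\mid\mathcal F^Y_t]^4$ plus terms of lower order in $N$; because the weights are unbounded, estimating and summing this factor has to be routed through Hölder (e.g.\ $\E^{\Q}[(\bar L^1_t)^2\bm 1_Q\mid\mathcal F^Y_t]^4\le\E^{\Q}[(\bar L^1_t)^4\mid\mathcal F^Y_t]^2\,\Q[\bar X^1_t\in Q\mid\mathcal F^Y_t]^2$), with no ellipticity of $\sigma$ available to control $\max_Q\Q[\bar X^1_t\in Q\mid\mathcal F^Y_t]$. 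Summing over the $\asymp(R2^j)^n$ relevant cubes with a power-mean inequality (costing a factor $(R2^j)^{7n}$), summing the dyadic series up to a finest level $2^{-J}$ while bounding the tail $j>J$ by the total mass, bounding the far-field term by $R^{-a}$ for arbitrarily large $a$ via the moments above, and finally optimizing $J$ and $R$ in $N$, yields $\E^{\Q}[(D^N_t)^8]\le CN^{-32/(7n+8)}$ uniformly in $t$, hence $\E^{\Q}[W_1(\bar\pi^N_t,\pi_t)^4]\le CN^{-16/(7n+8)}$.

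The main obstacle, and the source of the unusual exponent $16/(7n+8)$, is the pairing of unbounded likelihood weights with the lack of ellipticity: every per-cube moment estimate must pass through Hölder's inequality rather than a clean density bound, which degrades the rate well below the classical one, while the random normalizer $m^N=\frac1N\sum_i\bar L^i_t$ first has to be brought under control through the uniform-in-$N$ negative-moment bound of the first step (via a good-event splitting) before the self-normalization can be unwound. The rest --- balancing the dyadic cutoff against the truncation radius and carrying through the eighth-moment bookkeeping --- is routine.
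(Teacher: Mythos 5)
Your route is correct in its architecture but genuinely different from the paper's. The paper regularizes by convolving both $\bar\pi_t^N$ and $\pi_t$ with a Gaussian $\Phi_d$, compares the resulting densities $f_t^{N,d}$ and $f_t^d$ pointwise through an $A_1+A_2$ split that handles the self-normalization \emph{after} smoothing, exploits the scaling identity $\phi_d^8 = C d^{-7n}\phi_{d/(2\sqrt2)}$ to make the $d$-dependence explicit, and balances the resulting $N^{-2}d^{-7n/2}$ against the $O(d^4)$ mollification bias to land on $d=N^{-4/(7n+8)}$. You instead strip off the self-normalization \emph{first}, reducing the problem to the unnormalized random measures $\rho^N_t$ and $\rho_t$ via the algebraic identity for the normalized integrals, and then attack $D^N_t$ with the Horowitz--Karandikar dyadic-cube decomposition plus a far-field truncation at radius $R$. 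Both are HK-flavoured, but they are different regularizations (smoothing kernel vs.\ partition), and the normalization is handled at different stages. What the paper's mollification buys is that the $d$-exponent drops out in one line from the Gaussian identity and no combinatorial cube sum appears; what yours buys is that the bias/variance split is conceptually cleaner and the self-normalization is dispatched once and for all rather than being re-derived at the density level. Two small remarks: the negative-moment bound $\E^{\Q}[\rho^N_t(\mathbb R^n)^{-q}]\le C_q$ is a one-line application of Jensen to the convex map $x\mapsto x^{-q}$ applied to $\frac1N\sum_i\bar L^i_t$ (exactly as the paper does in eq.~\eqref{A1_2}), so the good-event/Markov splitting is unnecessary; and the final optimization over $(J,R)$ should be written out, since the crude power-mean bound $S_j^8\le M_j^7\sum_Q|(\rho^N-\rho)(Q)|^8$ with $M_j\asymp (R2^j)^n$ and the trivial bound $\sum_Q \Q[\bar X_t^1\in Q\mid\mathcal F_t^Y]^2\le1$ appears to land at roughly $N^{-32/(7n+7)}$ for $\E^{\Q}[(D^N_t)^8]$ rather than $N^{-32/(7n+8)}$, which is slightly \emph{stronger} than the lemma requires and therefore compatible with it, but worth checking to make sure no factor (in particular the near-boundary contribution of the Lipschitz test function, which can be of size $R$ on $B_R$) was dropped.
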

\begin{proof}
   
We write $\Phi_{d}\sim N(0,d^2I)$ to indicate $\Phi_d$ is the normal distribution on $\mathbb{R}^n$ with mean $0$ and variance $d^2I$. $\phi_{d}$ is the probability density function corresponding to $\Phi_{d}$. For any probability measure $\mu$ on $\mathbb{R}$, let $\mu^{d}=\Phi_d*\mu$ be the convolution of $\Phi_d$ and $\mu$.
Consequently, we can observe that  
\be\label{all_5}
W_1\left(\mu,\mu_d\right) \leq Cd.
\ee
To see this, let $\xi_1$ and $\xi_2$ be the independent random variables with laws $\mu$ and $\phi_d$ respectively, then $(\xi_1,\xi_1+\xi_2)$ is a coupling of $\mu$ and $\mu_d$, and 
$$ 
W_1\left(\mu,\mu_d\right)\leq \int_{\mathbb{R}}|x|\phi_d\D x=Cd.
$$ 

Hence, by triangle inequality of Wasserstein space, we have  
\be\label{all_6}
W_1\big(\bar{\pi}_t^N,\pi_t\big)^4\leq 27  W_1\big(\bar{\pi}_t^{N,d},\pi^d_t\big)^2+27  W_1\big(\pi_t,\pi^d_t\big)^2+27 W_1\big(\bar{\pi}_t^N,\bar{\pi}_t^{N,d}\big)^2.
\ee

Let $f_t^{N,d}$ and $f_t^d$ be the probability density functions of $\bar{\pi}_t^{N,d}$ and $\pi^d_t$. By Kantorovich-Rubinstein theorem and Holder's inequality, it follows that
\begin{align*}\nonumber
W_1\big(\bar{\pi}_t^{N,d},\pi^d_t\big)&=\inf_{\psi\in \text{Lip}(1)}\bigg\{\int\psi(x)f_t^{N,d}(x)\D x-\int\psi(x)f_t^{d}(x)\D x\bigg\}\\
&\leq \int |x|\big|f_t^{N,d}(x)-f_t^d(x)\big|\D x\nonumber\\
&\leq C \Big(\int \big(|x|^{3n+7}+1\big)\big|f_t^{N,d}(x)-f_t^d(x)\big|^4\D x\Big)^{\frac{1}{4}}.\nonumber
\end{align*}
Therefore, we have
\be\label{all_1}
\E^{\Q}\Big[ W_1\big(\bar{\pi}_t^{N,d},\pi^d_t\big)^4\Big]\leq C\int \big(|x|^{3n+7}+1\big)\E^{\Q}\Big[\big|f_t^{N,d}(x)-f_t^d(x)\big|^4\Big]\D x.
\ee
By definition, we can actually get the expression of $f_t^{N,d}(x)$ and $f_t^d(x)$:
\be\nonumber
f_t^{N,d}(x)=\frac{\sum_{i=1}^N\bar{L}_t^i\phi_d\big(x-\bar{X}_t^i\big)}{\sum_{i=1}^N\bar{L}_t^i} \quad\quad\text{and}\quad\quad f_t^d(x)=\frac{\E^{\Q}\left[\bar{L}_t^1\phi_d\left(x-\bar{X}_t^1\right)\big|\mathcal{F}_t^Y\right]}{\E^{\Q}\left[\bar{L}_t^1\big|\mathcal{F}_t^Y\right]},
\ee
where we use the fact that for all $i\geq 1$,
$$\E^{\Q}\Big[\bar{L}_t^i\phi_d\big(x-\bar{X}_t^i\big)\Big|\mathcal{F}_t^Y\Big]=\E^{\Q}\Big[\bar{L}_t^1\phi_d\big(x-\bar{X}_t^1\big)\Big|\mathcal{F}_t^Y\Big]\quad\text{and}\quad\E^{\Q}\Big[\bar{L}_t^i\Big|\mathcal{F}_t^Y\Big]=\E^{\Q}\big[\bar{L}_t^1|\mathcal{F}_t^Y\big].$$
Consequently, it follows that
\begin{align}\label{all_2}
\big|f_t^{N,d}(x)-f_t^d(x)\big|^4\leq &
8\,\bigg|\frac{\frac{1}{N}\sum_{i=1}^N\bar{L}_t^i\phi_d\left(x-\bar{X}_t^i\right)-\E^{\Q}\left[\bar{L}_t^1\phi_d\left(x-\bar{X}_t^1\right)|\mathcal{F}_t^Y\right]}{\frac{1}{N}\sum_{i=1}^N\bar{L}_t^i}\bigg|^4\nonumber\\ &+8\bigg|\E^{\Q}\Big[\bar{L}_t^1\phi_d\left(x-\bar{X}_t^1\right)\Big|\mathcal{F}_t^Y\Big]\bigg|^4\bigg|\Big(\frac{1}{N}{\sum_{i=1}^N\bar{L}_t^i}\Big)^{-1}-\Big(\E^{\Q}\big[\bar{L}_t^1\big|\mathcal{F}_t^Y\big]\Big)^{-1}\bigg|^4\nonumber\\
=&A_1+A_2.
\end{align}

Firstly, the term $A_1$ satisfies 
\be\label{A1_1}
\E^{\Q}\big[A_1\big]\leq C\,\E^{\Q}\bigg[\Big|\frac{1}{N}\sum^N_{i=1}\bar{L}_t^i\phi_d(x-\bar{X}_t^i)-\E^{\Q}\Big[\bar{L}_t^1\phi_d\big(x-\bar{X}_t^1\big)\Big|\mathcal{F}_t^Y\Big]\Big|^8\bigg]^{\frac{1}{2}} \E^{\Q}\bigg[\Big|\frac{1}{N}\sum_{i=1}^N\bar{L}_t^i\Big|^{-8}\bigg]^{\frac{1}{2}} .
\ee
Notice for all $1\leq i\leq N$, 
$$
\E^{\Q}\bigg[\max\limits_{t\in[0,T]}\big|\bar{L}_t^i\big|^{-8}\bigg]\leq C.
$$ 
Then by Jensen's inequality, it holds that
\be\label{A1_2}
\E^{\Q}\bigg[\Big|\frac{1}{N}\sum\limits_{i=1}^N\bar{L}_t^i\Big|^{-8}\bigg]\leq \E^{\Q}\bigg[\frac{1}{N}\sum\limits_{i=1}^N\big|\bar{L}_t^i\big|^{-8}\bigg]\leq C.
\ee
Note that $\{(\bar{X}_t^i,\bar{L}_t^i)\}_{i\geq 1}$ are the i.i.d. random variables under $\mathbb{P}\big[\cdot\big|\mathcal{F}_t^Y\big]$. We denote 
$$
g_d\left(x,\bar{X}_t^i,\bar{L}_t^i\right)=\bar{L}_t^i\phi_d\big(x-\bar{X}_t^i\big)-\E^{\Q}\Big[\bar{L}_t^i\phi_d\big(x-\bar{X}_t^i\big)\Big|\mathcal{F}_t^Y\Big],
$$ 
then we can deduce that $\{g_d (x,\bar{X}_t^i,\bar{L}_t^i)\}_{i=1}^N$ are the i.i.d. r.v.'s under $\mathbb{P}\big[\cdot\big|\mathcal{F}_t^Y\big]$. Hence, it follows that
\begin{align}\label{sum1}
&\E^{\Q}\bigg[\Big|\frac{1}{N}\sum^N_{i=1}\bar{L}_t^i\phi_d(x-\bar{X}_t^i)-\E^{\Q}\Big[\bar{L}_t^1\phi_d\big(x-\bar{X}_t^1\big)\Big|\mathcal{F}_t^Y\Big]\Big|^8\bigg|\mathcal{F}_t^Y\bigg]\nonumber\\
\leq &\frac{C}{N^8}\sum\limits_{1\leq i_1,i_2,i_3,i_4\leq N}\E^{\Q}\bigg[\prod\limits_{m=1}^4\Big|g_d\big(x,\bar{X}_t^{i_m},\bar{L}_t^{i_m}\big)\Big|^2\bigg|\mathcal{F}_t^Y\bigg].
\end{align}
The right hand side in (\ref{sum1}) has the upper bound estimates:
\begin{align} \nonumber
\E^{\Q}\bigg[\prod\limits_{m=1}^4\Big|g_d\big(x,\bar{X}_t^{i_m},\bar{L}_t^{i_m}\big)\Big|^2\bigg|\mathcal{F}_t^Y\bigg]
\leq \prod\limits_{m=1}^4\E^{\Q}\Big[\big|g_d\big(x,\bar{X}_t^{i_m},\bar{L}_t^{i_m}\big)\big|^8\Big|\mathcal{F}_t^Y\Big]^{\frac{1}{4}}=\E^{\Q}\left[\left|g_d\left(x,\bar{X}_t^1,\bar{L}_t^1\right)\right|^8|\mathcal{F}_t^Y\right].\nonumber
\end{align}
According to (\ref{sum1}), we have
\begin{align}\label{A1_4}
\displaystyle\E^{\Q}\bigg[\Big|\frac{1}{N}\sum^N_{i=1}\bar{L}_t^i\phi_d(x-\bar{X}_t^i)-\E^{\Q}\Big[\bar{L}_t^1\phi_d\big(x-\bar{X}_t^1\big)\Big|\mathcal{F}_t^Y\Big]\Big|^8\bigg|\mathcal{F}_t^Y\bigg] \leq \frac{C}{N^4}\E^{\Q}\Big[\big|g_d\left(x,\bar{X}_t^1,\bar{L}_t^1\right)\big|^8\Big|\mathcal{F}_t^Y\Big].
\end{align}
Combining (\ref{A1_1}) and (\ref{A1_4}), we can get the upper bound of $\E^{\Q}[A_1]$ with the help of Jensen's inequality
\begin{align}\label{A1_3}
\E^{\Q}\big[A_1\big]&\leq \frac{C}{N^2}\E^{\Q}\bigg[\Big|\E^{\Q}\big[\big|g_d\left(x,\bar{X}_t^1,\bar{L}_t^1\right)\big|^4|\mathcal{F}_t^Y\big]\Big|^\frac{1}{2}\bigg]\leq \frac{C}{N^2}\E^{\Q}\bigg[\big|g_d\big(x,\bar{X}_t^1,\bar{L}_t^1\big)\Big|^8\bigg]^{\frac{1}{2}}\nonumber\\
&\leq \frac{C}{N^2}\E^{\Q}\bigg[\Big|\bar{L}_t^1\phi_d\big(x-\bar{X}_t^1\big)\Big|^8\bigg]^{\frac{1}{2}}.
\end{align}

Next, we estimate $\E^{\Q}[A_2]$ by Cauchy's inequality:
\begin{align}\label{A2_1}
\E^{\Q}\big[A_2\big]& \leq \E^{\Q}\bigg[\bigg|\frac{1}{\frac{1}{N}\sum_{i=1}^N\bar{L}_t^i}-\frac{1}{\E^{\Q}\big[\bar{L}_t^1\big|\mathcal{F}_t^Y\big]}\bigg|^8\bigg]^{\frac{1}{2}}\E^{\Q}\bigg[\Big|\E^{\Q}\big[\bar{L}_t^1\phi_d\big(x-\bar{X}_t^1\big)\big|\mathcal{F}_t^Y\big]\Big|^8\bigg]^{\frac{1}{2}}\nonumber\\
&\leq \E^{\Q}\bigg[\Big|\E^{\Q}\big[\bar{L}_t^1\big|\mathcal{F}_t^Y\big]-\frac{1}{N}\sum\limits_{i=1}^N\bar{L}_t^i\Big|^{16}\bigg]^{\frac{1}{4}}\E^{\Q}\bigg[\Big|\E^{\Q}\big[\bar{L}_t^1|\mathcal{F}_t^Y\big]\Big|^{-16}\Big|\frac{1}{N}\sum\limits_{i=1}^N\bar{L}_t^i\Big|^{-16}\bigg]^{\frac{1}{4}}\nonumber\\
&\quad\times \E^{\Q}\Big[\big|\bar{L}_t^1\phi_d\big(x-\bar{X}_t^1\big)\big|^8\Big]^{\frac{1}{2}}.
\end{align}
By the similar calculation that deduces (\ref{A1_2}) and (\ref{A1_4}), we have
\begin{align}\label{A2_2}   
\E^{\Q}\bigg[\Big|\E^{\Q}\big[\bar{L}_t^1|\mathcal{F}_t^Y\big]\Big|^{-16}\Big|\frac{1}{N}\sum\limits_{i=1}^N\bar{L}_t^i\Big|^{-16}\bigg]\leq C,
\end{align}
and
\be\label{A2_3}
\E^{\Q}\bigg[\Big|\E^{\Q}\big[\bar{L}_t^1\big|\mathcal{F}_t^Y\big]-\frac{1}{N}\sum\limits_{i=1}^N\bar{L}_t^i\Big|^{16}\bigg]\leq \frac{C}{N^8}.
\ee
In view of (\ref{A2_1}), (\ref{A2_2}) and (\ref{A2_3}), we obtain 
\be\label{A2_4}
\E^{\Q}\big[A_2\big]\leq \frac{C}{N^2}\E^{\Q}\Big[\big|\bar{L}_t^1\phi_d\big(x-\bar{X}_t^1\big)\big|^8\Big]^{\frac{1}{2}}.
\ee
By (\ref{all_2}), (\ref{A1_3}) and (\ref{A2_4}), it follows that
\be\nonumber
\E^{\Q}\Big[ \big|f_t^{N,d}(x)-f_t^d(x)\big|^4\Big]\leq \frac{C}{N^2}\E^{\Q}\Big[\big|\bar{L}_t^1\phi_d\big(x-\bar{X}_t^1\big)\big|^8\Big]^{\frac{1}{2}}.
\ee
According to (\ref{all_1}) and Cauchy's inequality, we have
\begin{align}\label{all_wa}
\E^{\Q}\Big[ W_1\big(\bar{\pi}_t^{N,d},\pi^d_t\big)^4\Big]&\leq \frac{C}{N^2}\int \big(|x|^{3n+7}+1\big)\E^{\Q}\Big[\big|\bar{L}_t^1\phi_d\big(x-\bar{X}_t^1\big)\big|^8\Big]^{\frac{1}{2}}\D x\nonumber\\
&\leq \frac{C}{N^2}\left(\int \left(|x|^{6n+16}+1\right)\E^{\Q}\left[\left|\bar{L}_t^1\phi_d\left(x-\bar{X}_t^1\right)\right|^8\right]\D x\right)^{\frac{1}{2}}.
\end{align}
By simple calculation, we can check that
\be\nonumber
\phi_d(x)^8=Cd^{-7n}\phi_{\frac{d}{2\sqrt{2}}}(x).
\ee
Hence, it holds that
\begin{align}\nonumber
&\int \big(|x|^{6n+16}+1\big)\E^{\Q}\left[\left|\bar{L}_t^1\phi_d\left(x-\bar{X}_t^1\right)\right|^8\right]\D x\\
=&\, Cd^{-7n}\E^{\Q}\Big[\big|\bar{L}_t^1\big|^8\int \big(|x|^{6n+16}+1\big)\phi_{\frac{d}{2\sqrt{2}}}\big(x-\bar{X}_t^1\big)\D x\Big]\nonumber\\
\leq &\,Cd^{-7n}\E^{\Q}\Big[\big|\bar{L}_t^1\big|^8\int \big(|x|^{6n+16}+\big|\bar{X}_t^1\big|^{6n+16}+1\big)\phi_{\frac{d}{2\sqrt{2}}}(x)\D x\Big]\leq Cd^{-7n}.\nonumber
\end{align}
In view of (\ref{all_wa}), we have
\begin{align}\label{all_4}
\E^{\Q}\Big[ W_1\big(\bar{\pi}_t^{N,d},\pi^d_t\big)^4\Big]\leq \frac{C}{N^2d^{\frac{7}{2}n}}.
\end{align}
Choosing $d=N^{-\frac{4}{7n+8}}$ and combining (\ref{all_5}), (\ref{all_6}) and (\ref{all_4}), we obtain 
\be\nonumber
\E^{\Q}\Big[ W_1\big(\bar{\pi}_t^{N},\pi_t\big)^4\Big]\leq CN^{-\frac{16}{7n+8}}.
\ee

At last, we notice constant $C=C(K,T,n)$, so we can conclude that
\be\nonumber
\max\limits_{t\in[0,T]} \E^{\Q}\Big[ W_1\big(\bar{\pi}_t^{N},\pi_t\big)^4\Big]\leq CN^{-\frac{16}{7n+8}}.
\ee
The proof is complete.
\end{proof}

\section{Proof of main results}
\subsection{Proof of Theorem \ref{Theorem exist-unique}}
We first give the proof of uniqueness of solution to SDE (\ref{particle}).

\begin{theorem}\label{theorem1}
    Given probability space $(\Omega, \mathcal{F}, \mathbb{Q},\{\mathcal{F}_t\},\{W_t^i\}_{i=1}^N,Y )$, suppose that $\{(X_t^i,L_t^i)\}_{i=1}^N$ and $\{ (\hat{X}_t^i,\hat{L}_t^i)\}_{i=1}^N$ are two strong solutions to the SDE~\eqref{particle}. Then, it holds that for all $i=1,2,...,N$,
$$
(X_t^i,L_t^i)=(\hat{X}_t^i,\hat{L}_t^i),\quad \quad \mathbb{Q}-a.e.
$$
\end{theorem}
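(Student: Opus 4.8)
This uniqueness statement is the first ingredient of Theorem~\ref{Theorem exist-unique}; the plan is to establish it by the multiplier method behind Lemmas~\ref{lemma1}–\ref{lemma3}. The first step is a reduction: since each $L^i$ is a strictly positive exponential $\Q$-martingale, the log-weights $z^i_t:=\ln w^i_t$ and $\hat z^i_t:=\ln\hat w^i_t$ are well defined, and it suffices to prove $X^i=\hat X^i$ and $z^i=\hat z^i$ for all $i$. Indeed $z^i=\hat z^i$ gives $w^i=\hat w^i$, hence $\pi^N_t=\sum_i w^i_t\delta_{X^i_t}=\hat\pi^N_t$; then $L^i$ and $\hat L^i$ both solve the \emph{linear} equation $\D L_t=h(X^i_t,\pi^N_t)L_t\,\D Y_t$ with $L_0=1$ and the same bounded progressively measurable coefficient, so $L^i=\hat L^i$ by uniqueness for linear SDEs.

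To control $(X^i,z^i)$ I would apply Lemma~\ref{lemma3} with the \emph{second} solution in the role of the reference particle, i.e.\ with $(\hat X^i,\hat L^i)$ playing the part of $(\tilde X^i,\tilde L^i)$ and $\tilde\pi_s=\hat\pi^N_s$; this is admissible because $\sigma_2=0$, so $(\hat X^i,\hat L^i)$ does satisfy~\eqref{sde2} for this $\tilde\pi$. For an arbitrary multiplier $\Gamma_t=\exp(\int_0^t\gamma_s\,\D s)$ and $C=C(K)$, Lemma~\ref{lemma3} gives
\begin{align*}
\big|X^i_t-\hat X^i_t\big|^2\Gamma_t&\le C\!\int_0^t\!W_1(\pi^N_s,\hat\pi^N_s)^2\Gamma_s\,\D s+\int_0^t(C+\gamma_s)\big|X^i_s-\hat X^i_s\big|^2\Gamma_s\,\D s+(\text{mart.}),\\
\big|z^i_t-\hat z^i_t\big|^2\Gamma_t&\le C\!\int_0^t\!\big(\big|X^i_s-\hat X^i_s\big|^2+W_1(\pi^N_s,\hat\pi^N_s)^2\big)\Gamma_s\,\D s+\int_0^t(C+\gamma_s)\big|z^i_s-\hat z^i_s\big|^2\Gamma_s\,\D s+(\text{mart.}),
\end{align*}
with martingale parts $\int_0^t\Gamma_s(X^i_s-\hat X^i_s)^{\text{T}}\big(\sigma(X^i_s,\pi^N_s)-\sigma(\hat X^i_s,\hat\pi^N_s)\big)\,\D W^i_s$ and the analogous $\D Y$-integral. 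Combining Lemma~\ref{lemma1} (with $(\hat X^i,\hat w^i,\hat z^i)$ now in place of $(\tilde X^i,\tilde w^i,\tilde z^i)$), Cauchy--Schwarz, Jensen and $\sum_iw^i_s=\sum_i\hat w^i_s=1$ bounds $N\,W_1(\pi^N_s,\hat\pi^N_s)^2$ by $CN\sum_k(w^k_s+\hat w^k_s)(1+|X^k_s|^2)\big(|X^k_s-\hat X^k_s|^2+|z^k_s-\hat z^k_s|^2\big)$.

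Summing the two displays over $i$ (the $W_1$ terms thereby pick up a factor $N$), writing $\Xi_t:=\sum_i\big(|X^i_t-\hat X^i_t|^2+|z^i_t-\hat z^i_t|^2\big)\Gamma_t\ge0$, and bounding the maximum over $k$ by the sum over $k$, one arrives at
\[
\Xi_t\le\int_0^t\Big(C+\gamma_s+CN\sum\nolimits_{k=1}^N(w^k_s+\hat w^k_s)(1+|X^k_s|^2)\Big)\Xi_s\,\D s+(\text{mart.}).
\]
I would then \emph{choose} $\gamma_s:=-C-CN\sum_k(w^k_s+\hat w^k_s)(1+|X^k_s|^2)$ with the same $C$, so that the bracket vanishes and $\Xi_t\le(\text{mart.})$ a.s. This $\gamma_s$ is $\le0$, and since $\sum_k(w^k_s+\hat w^k_s)(1+|X^k_s|^2)\le2(1+\max_k|X^k_s|^2)$ has an a.s.\ finite integral on $[0,T]$ by path-continuity, $\Gamma_t\in(0,1]$; in particular $\Gamma_s\le1$ makes the stochastic integrals genuine martingales (their integrands are bounded by $C(K)|X^i_s-\hat X^i_s|$ and $C(K)|z^i_s-\hat z^i_s|$, which lie in $L^2(\Omega\times[0,T])$ because all coefficients of~\eqref{particle} and of the $z^i$-dynamics are bounded). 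Taking $\E^{\Q}$ therefore yields $\E^{\Q}[\Xi_t]\le0$; since $\Xi_t\ge0$ and $\Gamma_t>0$, this forces $X^i_t=\hat X^i_t$ and $z^i_t=\hat z^i_t$ $\Q$-a.s.\ for each $t$, and path-continuity promotes this to all $t\in[0,T]$, which by the first paragraph finishes the proof.

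The step I expect to be the main obstacle is the construction of the multiplier: one must produce a \emph{single} process $\gamma_s$, common to all $N$ particles, that dominates the coefficient generated by differentiating the weighted empirical measure $\pi^N$ — a coefficient quadratic in the unbounded positions $X^k$ and (once the $\ln$ is undone) in the weights $w^k,\hat w^k$ — while keeping $\Gamma$ a.s.\ strictly positive and uniformly bounded so that no stochastic integral degenerates into a mere local martingale. The $\ln w^i$ substitution of Lemmas~\ref{lemma1}–\ref{lemma3}, trading the badly non-Lipschitz map $L\mapsto w$ for the Lipschitz $z^i$-dynamics~\eqref{LipH}, is exactly what makes such a $\gamma_s$ available; a plain Gronwall argument on $\Xi$ would fail since the naive random coefficient is not even integrable after exponentiation (cf.\ the remark on $\Psi^{-1}$).
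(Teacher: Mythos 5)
Your proposal is correct and follows essentially the same route as the paper: a reduction to the log-weights $z^i=\ln w^i$, the $W_1$ decomposition of Lemma~\ref{lemma1}, the Itô/Gronwall estimate of Lemma~\ref{lemma3}, and a data-dependent exponential multiplier $\Gamma_t=\exp(\int_0^t\gamma_s\,\D s)$ with $\gamma_s\le 0$ chosen large enough (in magnitude) to absorb the random coefficient $N\sum_k(\cdot)(1+|X^k_s|^2)$ produced by differencing the weighted empirical measures. The only deviations are cosmetic — you carry first powers $w^k_s+\hat w^k_s$ in the multiplier where the paper's $\Phi_t$ uses $|w^k_s|^2+|\hat w^k_s|^2$ (plus a harmless $1/N^2$), and you spell out why $\Gamma_t\le 1$ makes the stochastic integrals true martingales before taking expectations — so the argument is the same one the paper gives.
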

\begin{proof}
Denote 
$$
\hat{\pi}_t^N=\frac{\sum_{i=1}^N\hat{L}_t^i\delta_{\hat{X}_t^i}}{\sum_{i=1}^N\hat{L}_t^i}
$$
and define the function 
$$
\Phi_t=\exp\bigg(-\alpha N\sum\limits_{k=1}^N\int_0^t \Big[\frac{1}{N^2}+\Big(\big|w_s^k\Big|^2+\Big|\hat{w}_s^k\big|^2\Big)\Big(1+\big|X_s^k\big|^2\Big)\Big]\mathrm{d}s\bigg),
$$ where $\alpha>0$ is an undetermined constant.
Denote 
$$
\Delta \hat{X}_t^i=X_t^i-\hat{X}_t^i, \quad\Delta \hat{z}_t^i=z_t^i-\hat{z}_t^i ,  \quad\Delta \hat{\sigma}_t^i=\sigma(X_t^i,\pi_t^N)-\sigma(\hat{X}_t^i,\hat{\pi}_t^N), \quad\Delta \hat{M}_t^i=M(X_t^i,\pi_t^N)-M(\hat{X}_t^i,\hat{\pi}_t^N).
$$ 
Utilizing Lemma\ref{lemma3}, we have
\begin{align}\label{u-4}
    \big|\Delta \hat{X}_t^{i}\big|^2\Phi_t
\leq  C\int_0^t W_1\left(\pi_s^N,\hat{\pi}^N_s\right)^2\Phi_s\D s+C\int_0^t\big|\Delta \hat{X}_s^{i}\big|^2\Phi_s\D s+C\int_0^t\Phi_s\Big( \Delta \hat{X}_t^{i}\Big)^{\text{T}} \Delta\hat{\sigma}_s^i\D W_s^i .
\end{align}
According to Lemma \ref{lemma1}, it holds that 
\be\label{u-5}
W_1\Big(\pi_t^N,\hat{\pi}_t^N\Big)^2\Phi_t\leq 2\bigg(\sum\limits_{i=1}^N\big| X_t^i\big|\big|\Delta \hat{z}_t^i\big|\big|w_t^i\vee\hat{w}_t^i\big|\bigg)^2\Phi_t+2\bigg(\sum\limits_{i=1}^N \hat{w}_t^i\big|\Delta \hat{X}_t^i\big|\bigg)^2\Phi_t.
\ee
By Cauchy's inequality, we have
\be\label{u-6}
\bigg(\sum\limits_{i=1}^N\big| X_t^i\big|\big|\Delta \hat{z}_t^i\big|\big|w_t^i\vee\hat{w}_t^i\big|\bigg)^2\Phi_t
\leq \sum\limits_{j=1}^N\big|\Delta\hat{z}_t^j\big|^2\sum\limits_{k=1}^N\big|X_t^k\big|^2\big|w_t^k\vee\hat{w}_t^k\big|^2\Phi_t,
\ee
and
\be\label{unique-16}
\bigg(\sum\limits_{i=1}^N \hat{w}_t^i\big|\Delta \hat{X}_t^i\big|\bigg)^2\Phi_t\leq \sum\limits_{j=1}^N \big|\hat{w}_t^j\big|^2\sum\limits_{k=1}^N \big|\Delta \hat{X}_t^k\big|^2\Phi_t.
\ee  
Plugging (\ref{u-5}), (\ref{u-6}) and (\ref{unique-16}) into (\ref{u-4}) and summing over $i$, we obtain that
\begin{align}\label{u-8}
\frac{1}{N}\sum\limits_{i=1}^N\left|\Delta \hat{X}_t^i\right|^2\Phi_t \leq & \, C\int_0^t \bigg(1+N\sum\limits_{k=1}^N\big|\hat{w}_s^k\big|^2\bigg)\frac{1}{N}\sum\limits_{i=1}^N\big|\Delta \hat{X}_s^i\big|^2 \Phi_s \D s+\frac{C}{N}\sum_{i=1}^N\int_0^t\Big(\Delta \hat{X}_s^i\Big)^{\text{T}} \Delta\hat{\sigma}_s^i\Phi_s\D W_s^i\nonumber\\
& + C\int_0^t\sum\limits_{k=1}^N \Big(\big|w_s^k\big|^2+\big|\hat{w}_s^k\big|^2\Big)\big|X_s^k\big|^2 \sum\limits_{i=1}^N\big|\Delta \hat{z}_s^i\big|^2\Phi_s\D s\nonumber\\
& -\alpha \int_0^t \sum\limits_{k=1}^N \bigg[\frac{1}{N^2}+\Big(\big|w_s^k\big|^2+\big|\hat{w}_s^k\big|^2\Big)\Big(1+\big|X_s^k\big|^2\Big)\bigg]\sum\limits_{i=1}^N\big|\Delta \hat{X}_s^i\big|^2\Phi_s\D s.
\end{align}

Similarly, it yields from Lemma \ref{lemma3} that
\begin{align}\label{unique-17}
\frac{1}{N}\sum\limits_{i=1}^N\big| \Delta\hat{z}_t^i\big|^2\Phi_t\leq& \,C \int_0^t  \Big(1+N\sum\limits_{i=1}^N\big|\hat{w}_s^i\big|^2\Big)\frac{1}{N}\sum\limits_{i=1}^N\big| \Delta \hat{X}_s^i\big|^2\Phi_s \D s+\frac{C}{N}\sum_{i=1}^N\int_0^t\Delta \hat{z}_s^i\Gamma_s\Big(\Delta \hat{M}_s^i\Big)^{\text{T}}\D Y_s\nonumber\\
    &+\big(C-\alpha \big) \int_0^t \sum\limits_{k=1}^N \bigg[\frac{1}{N^2}+\Big(\big|w_s^k\big|^2+\big|\hat{w}_s^k\big|^2\Big)\Big(1+\big|X_s^k\big|^2\Big)\bigg]\sum\limits_{i=1}^N\big|\Delta \hat{z}_s^i\big|^2\Phi_s\D s.
\end{align}
Combining (\ref{u-8}) and (\ref{unique-17}) and taking $\alpha$ large enough, we can get
\be\label{u-11}
\sum\limits_{i=1}^N\E^{\mathbb{Q}}\Big[\big|\Delta \hat{X}_t^i\big|^2\Phi_t+\big|\Delta \hat{z}_t^i\big|^2\Phi_t\Big]\leq 0.
\ee
Since $\Phi_t>0$, we can get $\Delta \hat{X}_t^i=0$ and $\Delta \hat{z}_t^i=0$ a.s.. Furthermore, we can conclude with $\Delta \hat{L}_t^i=0$ a.s..
\end{proof}

Let us recall the classic result of the equivalence of the weak solution of a functional SDE and the solution to the corresponding local martingale problem. More details can be found in \cite{Ikeda}.
    
\begin{theorem}\label{theorem2}
The existence of the weak solution $(\Omega,\mathcal{F},\mathbb{Q},\{X_t^i\}_{i=1}^N,\{L_t^i\}_{i=1}^N,\{W_t^i\}_{i=1}^N,Y )$ to the SDE~\eqref{particle} is equivalent to the existence of a probability space $(\tilde{\Omega},\tilde{\mathcal{F}},\tilde{\mathbb{Q}})$ satisfying that for all $f\in C_b^2 (\mathbb{R}^{n\times N}\times \mathbb{R}^N )$,
\be\label{martingale}
f(X_t,L_t)-f(X_0,L_0)-\int_0^t(Af)(s,X,L)\D s \in \mathcal{M}^{c,loc}(\tilde{\mathbb{Q}}),
\ee
where 

\begin{itemize}

\item
$X=(X^1,\cdots,X^N)$ and $L=(L^1,\cdots,L^N)$;

\item
$\mathcal{M}^{c,loc}(\tilde{\mathbb{Q}})$ is the family of all the continuous local martingales under the probability $\tilde{\mathbb{Q}}$;

\item
the operator $A$ is defined by
\begin{align*}
(Af)(t,X,L)=&\sum\limits_{i=1}^N b^{\text{T}}(X_t^i,\pi_t^N)\frac{\partial f}{\partial x_i}(X_t)+\frac{1}{2}\sum\limits_{i=1}^N\Tr\Big(\frac{\partial^2f}{\partial x_i^2}(X_t,L_t)\sigma\sigma^{\text{T}}(X_t^i,\pi_t^N)\Big)\nonumber\\
&+\frac{1}{2} \sum\limits_{i=1}^N \big|h(X_t^i,\pi_t^N)\big|^2 \big|L_t^i\big|^2 \frac{\partial^2f}{\partial l_i^2}(X_t,L_t)\nonumber
\end{align*}
with
$$
\frac{\partial f}{\partial x_i}=\left(\frac{\partial f}{\partial x_{i,1}},\cdots,\frac{\partial f}{\partial x_{i,n}}\right)^{\text{T}}\in \mathbb{R}^n, \quad
\frac{\partial^2 f}{\partial x^2_i}
=\left[\begin{array}{ccc}
  \frac{\partial^2 f}{\partial x^2_{i,1}}   & \cdots&  \frac{\partial^2 f}{\partial x_{i,1}\partial x_{i,n}}\\
   \vdots  & \ddots &  \vdots \\
   \frac{\partial^2 f}{\partial x_{i,n}\partial x_{i,1}}   & \cdots&  \frac{\partial^2 f}{\partial x^2_{i,n}}
\end{array}
\right]\in \mathbb{R}^{n\times n}
$$
and
$$
\pi_t^N= \frac{\sum_{i=1}^NL_t^i\delta_{X_t^i}}{\sum_{i=1}^N L_t^i}.
$$

\end{itemize}

\end{theorem}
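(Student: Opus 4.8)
\emph{Proof strategy.} This is the classical correspondence between a weak solution of an It\^o SDE and a solution of the associated local martingale problem; I would establish it as two implications, referring to \cite{Ikeda} for the routine details. For the direction ``weak solution $\Rightarrow$ martingale problem'', given a weak solution $(\Omega,\mathcal F,\mathbb Q,\{X^i\}_{i=1}^N,\{L^i\}_{i=1}^N,\{W^i\}_{i=1}^N,Y)$ of \eqref{particle} I would take $(\tilde\Omega,\tilde{\mathcal F},\tilde{\mathbb Q})=(\Omega,\mathcal F,\mathbb Q)$ and apply It\^o's formula to $f(X_t,L_t)$ for $f\in C_b^2(\mathbb R^{n\times N}\times\mathbb R^N)$. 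With the covariation structure dictated by \eqref{particle} --- namely $\D\langle X^i\rangle_t=\sigma\sigma^{\mathrm T}(X^i_t,\pi^N_t)\,\D t$, the $W^i$ mutually independent and independent of $Y$, and $\D\langle L^i,L^j\rangle_t=h^{\mathrm T}(X^i_t,\pi^N_t)h(X^j_t,\pi^N_t)\,L^i_tL^j_t\,\D t$ from the common driver $Y$ --- the finite-variation part of $f(X_t,L_t)$ collapses to $\int_0^t(Af)(s,X,L)\,\D s$, while the remainder $\sum_i\int_0^t(\partial_{x_i}f)^{\mathrm T}\sigma(X^i_s,\pi^N_s)\,\D W^i_s+\sum_i\int_0^t(\partial_{l_i}f)\,h^{\mathrm T}(X^i_s,\pi^N_s)L^i_s\,\D Y_s$ is a continuous local martingale --- actually a genuine martingale, since $\partial_{x_i}f,\partial_{l_i}f,\sigma,h$ are bounded and each $L^i$, being a stochastic exponential with bounded integrand, has finite moments of every order. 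This gives \eqref{martingale}.

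For the converse, assuming $(X,L)$ on $(\tilde\Omega,\tilde{\mathcal F},\tilde{\mathbb Q})$ satisfies \eqref{martingale} for all $f\in C_b^2$, I would equip the space with the augmented filtration generated by $(X,L)$, feed into \eqref{martingale} smooth truncations of the coordinate maps $(x,l)\mapsto x_i$, $(x,l)\mapsto l_i$ and of their pairwise products, and localize along the exit times of large balls (on which $(X,L)$ is bounded, so the truncations agree with the coordinate maps). This extracts the continuous local martingales
\[
M^{x_i}_t=X^i_t-x-\int_0^t b(X^i_s,\pi^N_s)\,\D s,\qquad M^{l_i}_t=L^i_t-1,
\]
with covariations read off from $A$: $\D\langle M^{x_i}\rangle_t=\sigma\sigma^{\mathrm T}(X^i_t,\pi^N_t)\,\D t$, the $M^{x_i}$ mutually orthogonal and orthogonal to every $M^{l_j}$, and $\D\langle M^{l_i},M^{l_j}\rangle_t=h^{\mathrm T}(X^i_t,\pi^N_t)h(X^j_t,\pi^N_t)\,L^i_tL^j_t\,\D t$. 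I would then invoke the martingale representation theorem: the $\mathbb R^N$-valued martingale $(M^{l_1},\dots,M^{l_N})$ has covariation density of the factored form $VV^{\mathrm T}$ with $V_i(t)=L^i_t\,h^{\mathrm T}(X^i_t,\pi^N_t)$, hence is represented as $\int_0^tV(s)\,\D Y_s$ for a single $k$-dimensional Brownian motion $Y$, that is $M^{l_i}_t=\int_0^t h^{\mathrm T}(X^i_s,\pi^N_s)L^i_s\,\D Y_s$ simultaneously for all $i$; and, after enlarging $\tilde\Omega$ by $N$ auxiliary independent $m$-dimensional Brownian motions $\beta^i$ and setting $W^i_t=\int_0^t\sigma^{+}(X^i_s,\pi^N_s)\,\D M^{x_i}_s+\int_0^t\bigl(I-\sigma^{+}\sigma\bigr)(X^i_s,\pi^N_s)\,\D\beta^i_s$, one obtains mutually independent Brownian motions $W^1,\dots,W^N$, independent of $Y$, with $M^{x_i}_t=\int_0^t\sigma(X^i_s,\pi^N_s)\,\D W^i_s$. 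Then $(X,L,\{W^i\},Y)$ is a weak solution of \eqref{particle} on the (possibly enlarged) space.

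The step where I expect the main obstacle is this representation in the converse direction: coping with a possibly degenerate $\sigma$ via the space-enlargement/pseudo-inverse device, and verifying that a single \emph{common} Brownian motion $Y$ represents all the $M^{l_i}$ at once --- which is precisely what the shared, rank-$k$ product covariation structure of the $L$-components forces. Everything else is a routine interplay of It\^o's formula, localization, and integrability of exponential martingales with bounded integrands; I would simply cite \cite{Ikeda} for the full details.
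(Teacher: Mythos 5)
The paper does not actually prove this theorem --- it simply states it and cites \cite{Ikeda} for the classical equivalence between a weak solution of an It\^o SDE and a solution of the corresponding local martingale problem. Your two-direction sketch (It\^o's formula for ``weak solution $\Rightarrow$ martingale problem''; martingale representation with space enlargement and a pseudo-inverse of $\sigma$ for the converse) is the standard argument behind that reference, so in spirit you are following the same route.

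There is, however, a concrete inconsistency in your covariation bookkeeping that you notice but do not resolve. You correctly record that all $L^i$ are driven by the common $Y$, so that $\D\langle L^i,L^j\rangle_t = h^{\mathrm T}(X^i_t,\pi^N_t)\,h(X^j_t,\pi^N_t)\,L^i_tL^j_t\,\D t$ for \emph{every} pair $(i,j)$, not just $i=j$. But the operator $A$ in the theorem only contains the diagonal second-order terms $\tfrac12\sum_i|h(X^i,\pi^N)|^2|L^i|^2\,\partial^2_{l_i}f$. Consequently the It\^o expansion of $f(X_t,L_t)$ produces an extra finite-variation contribution $\tfrac12\sum_{i\neq j}h^{\mathrm T}(X^i_t,\pi^N_t)h(X^j_t,\pi^N_t)L^i_tL^j_t\,\partial^2_{l_il_j}f$ for a generic $f\in C^2_b$, and this does \emph{not} collapse into $\int_0^t(Af)\,\D s$ as you assert. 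Symmetrically, in the converse direction the martingale problem \eqref{martingale} with the stated $A$ only fixes the diagonal brackets $\langle M^{l_i}\rangle$ and says nothing about $\langle M^{l_i},M^{l_j}\rangle$ for $i\neq j$, so the ``rank-$k$ product covariation structure'' that you invoke to obtain a single common $Y$ via martingale representation is not actually a consequence of \eqref{martingale} as written; without it, the $L^i$ could just as well be driven by independent Brownian motions, and the implication fails. Either the operator in the theorem must be corrected to include the off-diagonal $\partial^2_{l_il_j}$ contributions from the shared driver $Y$, or your proof must justify why those terms may be dropped (they may not). You should flag and fix this before relying on the equivalence.
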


Now we show the existence of the strong solution.
\begin{theorem}\label{theorem4.3}
There exists a weak solution $(\Omega,\mathcal{F},\mathbb{Q},\{X_t^i\}_{i=1}^N,\{L_t^i\}_{i=1}^N,\{W_t^i\}_{i=1}^N,Y )$ to the SDE~\eqref{particle}.
\end{theorem}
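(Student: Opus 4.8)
The goal is to produce a weak solution to the particle system \eqref{particle}, equivalently (by Theorem~\ref{theorem2}) to exhibit a probability space on which the local martingale problem for the operator $A$ is solved. The plan is to build an approximating sequence, establish tightness of the laws on path space, pass to a weak limit, and identify the limit as a solution of the martingale problem; then invoke the Yamada--Watanabe principle together with the pathwise uniqueness of Theorem~\ref{theorem1} to upgrade the weak solution to a strong one (this last step is what makes Theorem~\ref{Theorem exist-unique} follow from Theorems~\ref{theorem1} and~\ref{theorem4.3}).

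Concretely, I would first \emph{regularize/truncate} the system to remove the bad denominator $\sum_i L^i_s$. For each $R>0$ replace the weights $w^i_s = L^i_s / \sum_j L^j_s$ by a smoothed, bounded version $w^{i,R}_s$ in which the denominator is bounded below away from zero (e.g. replace $\sum_j L^j$ by $\max(\sum_j L^j, 1/R)$, or mollify), so that the truncated coefficients $b,\sigma,h$ evaluated at the truncated empirical measure become bounded and Lipschitz in the state variables. For fixed $R$ the truncated system \eqref{particle} is a standard SDE with bounded Lipschitz coefficients, hence has a unique strong solution $\{(X^{i,N,R},L^{i,N,R})\}$. Next I would derive $R$-uniform moment bounds: since $|h|\le K$, each $L^{i,N,R}$ is a nonnegative exponential martingale (under $\Q$) with $\E^{\Q}[\sup_{t\le T}|L^{i,N,R}_t|^p]\le C_p$ and, crucially, $\E^{\Q}[\sup_{t\le T}|L^{i,N,R}_t|^{-p}]\le C_p$ for all $p$, using that $(L^{i,R})^{-1}$ is again a bounded-drift exponential; together with $\E^{\Q}[\sup_{t\le T}|X^{i,N,R}_t|^p]\le C_p$ from the boundedness of $b,\sigma$. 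These are exactly the moment estimates of the type appearing in Lemma~\ref{lemma4} and in the proof of Lemma~\ref{convergence rate}.

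With these bounds in hand I would show the family of laws of $\big(\{X^{i,N,R}\},\{L^{i,N,R}\},\{W^i\},Y\big)_{R>0}$ on $C([0,T];\mathbb{R}^{\cdot})$ is tight. Tightness of each component follows from Kolmogorov's criterion: $\E^{\Q}|X^{i,R}_t-X^{i,R}_s|^4\le C|t-s|^2$ and likewise for $L^{i,R}$, uniformly in $R$, from the boundedness of the coefficients and BDG. By Prokhorov and the Skorokhod representation theorem, extract a subsequence converging in law, realized as a.s.\ uniform convergence on a new probability space $(\tilde\Omega,\tilde{\mathcal F},\tilde\Q)$ to some $\big(\{X^i\},\{L^i\},\{W^i\},Y\big)$. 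The key point making the limit identifiable is that the lower bound $\E^{\Q}[\sup_t (\sum_j L^{j,R}_t)^{-p}]\le C_p$ (uniform in $R$) forces $\inf_{t\le T}\sum_j L^j_t>0$ a.s.\ in the limit, so the limiting weights $w^i_s = L^i_s/\sum_j L^j_s$ are well defined and the truncation at level $R$ is inactive for $R$ large on a set of probability tending to $1$; hence the truncated coefficients converge (using their continuity in $(x,\mu)$ w.r.t.\ $|\cdot|+W_1$, and the a.s.\ uniform convergence of the empirical measures in $W_1$, which in turn follows from the convergence of the atoms and weights as in Lemma~\ref{lemma1}) to the genuine coefficients of \eqref{particle}.

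Finally I would identify the limit as a solution of the martingale problem \eqref{martingale}: for each $f\in C_b^2$ the process $f(X^R_t,L^R_t)-f(X^R_0,L^R_0)-\int_0^t (A^R f)(s,X^R,L^R)\,\D s$ is a $\Q$-martingale for the truncated generator $A^R$; passing to the limit using the a.s.\ uniform convergence, the uniform integrability supplied by the moment bounds, and the convergence $A^R f\to A f$ pointwise, one checks that $f(X_t,L_t)-f(X_0,L_0)-\int_0^t (Af)(s,X,L)\,\D s$ is a $\tilde\Q$-local martingale, and that $Y$ and the $W^i$ remain independent Brownian motions. By Theorem~\ref{theorem2} this yields a weak solution to \eqref{particle}. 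The main obstacle is precisely the non-Lipschitz, singular nature of the weights $w^i = L^i/\sum_j L^j$: one must show the truncation disappears in the limit, which rests on the two-sided ($p$-th moment and inverse $p$-th moment) bounds on $\sum_j L^j_t$ holding uniformly in the truncation parameter, and on propagating these through the Skorokhod coupling so that $W_1$-continuity of the coefficients can be applied.
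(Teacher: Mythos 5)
Your overall strategy---construct an approximating sequence, establish uniform moment and Kolmogorov-type continuity estimates, extract a convergent subsequence via Skorokhod's representation, and identify the limit through the martingale problem of Theorem~\ref{theorem2}---matches the paper's. The concrete approximation differs: the paper uses an explicit dyadic Euler--Maruyama scheme (coefficients frozen on each interval $[jT/2^k,(j+1)T/2^k]$, with $L^{i,k}$ written as stochastic exponentials), so that existence of the approximant is immediate by construction and all the work goes into the moment estimates and the $J_1$--$J_4$ decomposition driving $A^{k_n}f\to Af$. You instead propose truncating the weight denominator $\sum_j L^j$.

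There is a genuine gap in your first step. You assert that after truncating $\sum_j L^j$ the coefficients become globally Lipschitz in $(X,L)$, so the truncated system has a unique strong solution. That is not true. By Lemma~\ref{lemma1}, $W_1(\pi^N,\hat\pi^N)\le \sum_i |w^i-\hat w^i|\,|X^i| + \sum_i \hat w^i|X^i-\hat X^i|$, and the first sum carries an $|X^i|$ factor that is unbounded on $\mathbb{R}^{nN}$; consequently, even with the denominator bounded away from zero and the weights bounded, the map $(X,L)\mapsto b(X^i,\pi^N(X,L))$ is only \emph{locally} Lipschitz in the state. It is of course bounded and continuous (since $b$ is bounded and $W_1$-Lipschitz), but you have not established existence for the truncated system by the route you claim. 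This is repairable---either invoke weak existence for SDEs with bounded continuous coefficients (Skorokhod/Stroock--Varadhan; weak existence is all you actually need to start the compactness argument, so the ``unique strong solution'' claim is superfluous), or add a second layer of spatial truncation in $X$, which does produce Lipschitz coefficients at the cost of an extra limit to remove. As written, though, the justification for the first step does not hold, whereas the paper's piecewise-explicit Euler construction never needs to confront the coupled nonlinear structure of \eqref{particle} at the approximation stage. Your treatment of the $R$-uniform two-sided moment bounds on $\sum_j L^j$, of why the truncation becomes inactive in the limit, and of the final martingale-problem identification is sound and consistent with the paper's.
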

\begin{proof}
According to Theorem \ref{theorem2}, it remains to check the local martingale problem (\ref{martingale}).   

First we can construct a series of stochastic process $X^{(k)}=\{X^{i,k}\}_{i=1}^N$, $L^{(k)}=\{L^{i,k}\}_{i=1}^N$ and $X^{(k)}=\{X^{i,k}\}_{i=1}^N$, $z^{(k)}=\{z^{i,k}\}_{i=1}^N$ with $X_0^{i,k}=x$ and $z_0^{i,k}=ln(\frac{1}{N})$ on the canonical probability space $(\tilde{\Omega},\tilde{\mathcal{F}},\tilde{\mathbb{Q}},\{W_t^i\}_{i=1}^N,Y)$ satisfying $\forall\, t\in[{jT/2^k},{(j+1)T/2^k}]$,
\begin{equation}\nonumber
\left\{
\begin{aligned}                       
&X_t^{i,k}=X_{jT/2^k}^{i,k}+b\big(X_{jT/2^k}^{i,k},\pi^{N,k}_{jT/2^k}\big)\Big(t-\frac{j}{2^k}T\Big)+\sigma\big(X_{jT/2^k}^{i,k},\pi^{N,k}_{jT/2^k}\big)\Big(W^i_t-W^i_{jT/2^k}\Big),\\
&z^{i,k}_t=z_{jT/2^k}^{i,k}+R \big( X_{jT/2^k}^{i,k},  \pi_{jT/2^k}^{N,k}\big)\Big( t-\frac{j}{2^k}T \Big)+H \big( X_{jT/2^k}^{i,k},  \pi_{jT/2^k}^{N,k} \big) \left(Y_t-Y_{jT/2^k}\right),\\
  &L_t^{i,k}=\exp\bigg(\int_0^t h^{\text{T}}\big(X_s^{i,k},\pi_s^{N,k}\big)\D Y_s-\frac{1}{2}\int_0^t \Big|h\big(X_s^{i,k},\pi_s^{N,k}\big)\Big|^2\D s\bigg), \\
\end{aligned}
\right.
\end{equation}
where $\{W_t^i\}_{i=1}^N$, $Y$ are independent Brownian motions in $(\tilde{\Omega},\tilde{\mathcal{F}},\tilde{\mathbb{Q}})$, 
$$w_t^{i,k}=\exp\big(z_t^{i,k}\big)\quad\text{and}\quad \pi_t^{N,k}=\sum\limits_{i=1}^Nw_t^{i,k}\delta_{X_t^{i,k}}.$$
In view of the boundedness of $b$, $\sigma$, $h$, it follows
\be\nonumber
\sup\limits_{k}\sup\limits_{t\in[0,T]}\E^{\tilde{\Q}}\Big[\big|X^{i,k}_t\big|^4+\big|L^{i,k}_t\big|^4+\big|z^{i,k}_t\big|^4\Big]\leq \,C,
\ee
and
\be\nonumber
\sup\limits_{k}\E^{\tilde{\Q}}\Big[\big|X^{i,k}_t-X^{i,k}_s\big|^4+\big|L^{i,k}_t-L^{i,k}_s\big|^4+\big|z^{i,k}_t-z^{i,k}_s\big|^4\Big]&\leq C|t-s|^2,\ \ \forall\, 0\leq s\leq t\leq T.
\ee

Due to the Theorem 4.2 in \cite{Ikeda}, there exist $(n+1)N+1$-dimensional continuous processes 
$$
\left(\hat{X},\hat{L},\hat{z}\right)\quad \text{and} \quad\left\{\left(\hat{X}^{(k_n)},\hat{L}^{(k_n)},\hat{z}^{(k_n)}
\right)\right\}_{n\geq 1}
$$ 
defined on a completed probability space $(\hat{\Omega},\hat{\mathcal{F}},\hat{\mathbb{Q}})$, such that $(\hat{X}^{(k_n)},\hat{L}^{(k_n)},\hat{z}^{(k_n)})$ has the same distribution of $(X^{(k_n)},L^{(k_n)},z^{(k_n)})$, and $\{ ( \hat{X}^{(k_n)}_t, \hat{L}^{(k_n)}, \hat{z}^{(k_n)}) \}_{n\geq 1}$  converge to $(\hat{X}_t,\hat{L}_t,\hat{z}_t)$ uniformly in $[0,T]$, i.e.,
\be\nonumber
\lim\limits_{n\rightarrow \infty}\sup\limits_{t\in[0,T]}\Big\{\big|\hat{X}^{(k_n)}_t-\hat{X}_t\big|+\big|\hat{L}^{(k_n)}_t-\hat{L}_t\big|+\big|\hat{z}^{(k_n)}_t-\hat{z}_t\big|\Big\}=0.
\ee
For $0\leq j\leq 2^{k}-1$, $t\in[jT/2^k,(j+1)T/2^k]$, we define
\begin{align*}
(A^{k}f)(t,X,L)=&\sum\limits_{i=1}^Nb^{\text{T}}\big(X^{i}_{jT/2^k},\pi^{N}_{jT/2^k}\big)\frac{\partial f}{\partial x_i}\left(X_t,L_t\right)+\frac{1}{2}\sum\limits_{i=1}^N\Tr\Big(\sigma\sigma^{\text{T}}\big(X^{i}_{jT/2^k},\pi^{N}_{jT/2^k}\big)\frac{\partial^2f}{\partial x_i^2}\left(X_t,L_t\right)\Big)\nonumber\\
& + \frac{1}{2}\sum\limits_{i=1}^N\left|L_t^ih(X^i_t,\pi^N_t)\right|^2\frac{\partial^2f}{\partial l_i^2}\left(X_t,L_t\right).\nonumber
\end{align*}
Then by Theorem \ref{theorem2}, for $s<t$, $\forall \,\mathcal{B}_s(C(\mathbb{R}^{2N}))-$measureable bounded continuous function $F_s$, we have
\be\nonumber
\E^{\hat{\Q}}\bigg[\bigg(f\Big(\hat{X}^{(k_n)}_t,\hat{L}^{(k_n)}_t\Big)-f\Big(\hat{X}^{(k_n)}_s,\hat{L}^{(k_n)}_s\Big)-\int_s^t(A^{k_n}f)\Big(r,\hat{X}^{(k_n)},\hat{L}^{(k_n)}\Big)\D r\Big)F_s\Big(\hat{X}^{(k_n)},\hat{L}^{(k_n)}\Big)\bigg]=0.
\ee

Let $n\rightarrow \infty$, we only need to prove
\be\label{existence1}
\lim\limits_{n\rightarrow \infty}\E^{\hat{\Q}}\bigg[\int_s^t(A^{k_n}f)\Big(r,\hat{X}^{(k_n)},\hat{L}^{(k_n)}\Big)\D rF_s\Big(\hat{X}^{k_n},\hat{L}^{(k_n)}\Big)\bigg]=\E^{\hat{\Q}}\bigg[\int_s^t(Af)\big(r,\hat{X},\hat{L}\big)\D rF_s\big(\hat{X},\hat{L}\big)\bigg].
\ee
If the equation (\ref{existence1}) is true, it follows that
\be\nonumber
\E^{\hat{\Q}}\bigg[\bigg(f\big(\hat{X}_t,\hat{L}_t\big)-f\big(\hat{X}_s,\hat{L}_s\big)-\int_s^t(Af)\big(r,\hat{X},\hat{L}\big)\D r\bigg)F_s\big(\hat{X},\hat{L}\big)\bigg]=0.
\ee
Hence, there exist independent standard Brownian motions $\{\hat{W}^i\}_{i=1}^N$ and $\hat{Y}$, such that 
$$
\left(\hat{\Omega},\hat{\mathcal{F}},\hat{\mathbb{P}},\hat{X},\hat{L},\big\{\hat{W}^i\big\}_{i=1}^N,\hat{Y}\right)
$$ 
is a weak solution of SDE (\ref{particle}).

Now, we begin to prove (\ref{existence1}). Due to the uniform boundedness of 
$$
\int_s^t(A^{k_n}f)\Big(r,\hat{X}^{(k_n)},\hat{L}^{(k_n)}\Big)\D r,
$$ 
it holds that
\begin{align}\label{existence2}
& \displaystyle \lim\limits_{n\rightarrow \infty}\E^{\hat{\Q}}\bigg[\int_s^t(A^{k_n}f)\Big(r,\hat{X}^{(k_n)},\hat{L}^{(k_n)}\Big)\D rF\Big(\hat{X}^{(k_n)},\hat{L}^{(k_n)}\Big)\bigg]\nonumber\\
=& \displaystyle \lim\limits_{n\rightarrow \infty}\E^{\hat{\Q}}\bigg[\int_s^t(A^{k_n}f)\Big(r,\hat{X}^{(k_n)},\hat{L}^{(k_n)}\Big)\D rF\big(\hat{X},\hat{Y}\big)\bigg].
\end{align}

Denote 
$$
\hat{\pi}_t^{N,n}=\sum\limits_{i=1}^N\hat{w}_t^{i,n}\delta_{\hat{X}_t^{i,(n)}}\quad\quad\text{and}\quad\quad\hat{\pi}_t^{N,n}=\sum\limits_{i=1}^N\hat{w}_t^{i}\delta_{\hat{X}_t^{i}}.
$$ 
where $\hat{w}_t^{i,n}=\exp(\hat{z}_t^{i,n})$ and $\hat{w}_t^{i}=\exp(\hat{z}_t^{i})$. By the boundedness of $h,|F|,|\frac{\partial f}{\partial x_i},|\frac{\partial^2 f}{\partial x_i^2},|\frac{\partial^2 f}{\partial l_i^2}|$, we can get
\begin{align}\label{existence3}
&\E^{\hat{\Q}}\bigg[\bigg|\int_s^t(A^{k_n}f)\left(r,\hat{X}^{(k_n)},\hat{L}^{(k_n)}\right)\D rF_s\big(\hat{X},\hat{L}\big)-\int_s^t(Af)\big(r,\hat{X},\hat{L}\big)\D rF_s\big(\hat{X},\hat{L}\big)\bigg|\bigg]\nonumber\\
&\leq  C\,\E^{\hat{\Q}}\bigg[\int_s^t\bigg|(A^{k_n}f)\left(r,\hat{X}^{(k_n)},\hat{L}^{(k_n)}\right)-(Af)\big(r,\hat{X},\hat{L}\big)\bigg|\D r\bigg]\nonumber\\
 &\leq  C\,\sum\limits_{i=1}^N\E^{\hat{\Q}}\bigg[\int_s^t\Big|
 (b ,\sigma \sigma^{\text{T}})(\hat{X}_r^{i,k_n},\hat{\pi}_r^{N,k_n})- (b ,\sigma \sigma^{\text{T}})(\hat{X}^{i}_r,\hat{\pi}_r^N)\Big|\D r\bigg]\nonumber\\
 &+ C\,\sum\limits_{i=1}^N\E^{\hat{\Q}}\left[\int_s^t\Big|
 |h|^2(\hat{X}_r^{i,k_n},\hat{\pi}_r^{N,k_n})- |h|^2(\hat{X}^{i}_r,\hat{\pi}_r^N)\Big|\Big|\hat{L}_r^i\Big|^2+\Big|\big|\hat{L}_r^{i,k_n}\big|^2-\big|\hat{L}_r^i\big|^2\Big|\D r\right]\nonumber\\
&+ C\sup\limits_{0\leq j\leq 2^{k_n}-1} \bigg\{ \sup\limits_{r\in\left[{jT/2^{k_n}},{(j+1)T/2^{k_n}}\right]}\sum\limits_{i=1}^N\E^{\hat{\Q}}\bigg[\Big|
(b ,\sigma \sigma^{\text{T}})(\hat{X}_r^{i,k_n},\hat{\pi}_r^{N,k_n})- (b ,\sigma \sigma^{\text{T}})(\hat{X}^{i,k_n}_{jT/2^{k_n}},\hat{\pi}_{jT/2^{k_n}}^{N,k_n})\Big|\bigg]\bigg\} \nonumber\\
&+C\,\sum\limits_{i=1}^N\E^{\hat{\Q}}\bigg[\int_s^t \bigg| \left(\frac{\partial f}{\partial x_i},\frac{\partial^2f}{\partial x_i^2},\frac{\partial^2f}{\partial l_i^2}\right)\big(\hat{X}^{(k_n)}_r,\hat{L}^{(k_n)}_r\big)-\left(\frac{\partial f}{\partial x_i},\frac{\partial^2f}{\partial x_i^2},\frac{\partial^2f}{\partial l_i^2}\right)\big(\hat{X}_r,\hat{L}_r\big)\bigg|\D r  \bigg ]\nonumber\\
&:=J_1+J_2+J_3+J_4,
\end{align}
where 
$$
\left|\left(b,\sigma \sigma^{\text{T}}\right)(x,y)\right|:=\left|b(x,y)\right|+\left|\sigma \sigma^{\text{T}}(x,y)\right|, 
$$
and
$$
\left| \left(\frac{\partial f}{\partial x_i},\frac{\partial^2f}{\partial x_i^2},\frac{\partial^2f}{\partial l_i^2}\right)(x,y)\right|:= \left| \frac{\partial f}{\partial x_i}(x,y)\right|+\left| \frac{\partial^2 f}{\partial x_i^2}(x,y)\right|+\left| \frac{\partial^2 f}{\partial l_i^2}(x,y)\right|.
$$

Firstly, we prove the convergence of 
$$
J_3=\sup\limits_{0\leq j\leq 2^{k_n}-1} \Big\{ \sup\limits_{r\in\left[{jT/2^{k_n}},{(j+1)T/2^{k_n}}\right]} \sum\limits_{i=1}^N\E^{\hat{\Q}}\Big[\big|(b ,\sigma \sigma^{\text{T}})(\hat{X}_r^{i,k_n},\hat{\pi}_r^{N,k_n})- (b ,\sigma \sigma^{\text{T}})(\hat{X}^{i,k_n}_{jT/2^{k_n}},\hat{\pi}_{jT/2^{k_n}}^{N,k_n})\big|\Big] \Big\}.
$$ 
According to the Lipschitz continuity of $b$ and $\sigma$, it follows that
\begin{align}\label{existence3.5}
&\sum\limits_{i=1}^N\E^{\hat{\Q}}  \Big[\big| (b,\sigma \sigma^{\text{T}}) (\hat{X}_r^{i,k_n},\hat{\pi}_r^{N,k_n} ) - (b,\sigma \sigma^{\text{T}}) ( \hat{X}^{i,k_n}_{jT/2^{k_n}},\hat{\pi}_{jT/2^{k_n}}^{N,k_n} ) \big| \Big] \nonumber\\
\leq&\sum\limits_{i=1}^N C\E^{\hat{\Q}} \bigg[ \Big| \hat{X}_r^{i,k_n}-\hat{X}^{i,k_n}_{jT/2^{k_n}}  \Big|+W_1\big( \hat{\pi}_r^{N,k_n},\hat{\pi}_{jT/2^{k_n}}^{N,k_n} \big) \bigg].
\end{align} 
By the boundedness of $b,\sigma$, we have the following standard estimates with the help of BDG inequality:      
\be\label{lemma4.2}
\E^{\Q}\Big[\max\limits_{t\in[0,T]}\big|\hat{X}_t^{i,k_n}\big|^4\Big]\leq C\quad\text{and}\quad\E^{\Q}\Big[\big|\hat{X}_t^{i,k_n}-\hat{X}_s^{i,k_n}\big|^4\Big]\leq C|t-s|^2,\ \ 1\leq i\leq N.
\ee
Notice $\forall\, t\in({jT/2^k},{(j+1)T/2^k})$,
\be
dw_t^{i,k}=w_t^{i,k}H(X_{jT/2^k}^{i,k},\pi^N_{jT/2^k})\D t+w_t^{i,k}M^{\text{T}}(X_{jT/2^k}^{i,k},\pi^N_{jT/2^k})\D Y_t.\nonumber
\ee
By the boundedness of $(H,M)$ and BDG inequality, we have
\be\label{boundw}
\E^{\Q}\Big[\big|\hat{w}_t^{i,k_n}\big|^4\Big]=\E^{\Q}\Big[\big|w_t^{i,k_n}\big|^4\Big]\leq \frac{C}{N^4},\ \ 1\leq i\leq N.
\ee
Furthermore, by BDG inequality and Cauchy's inequality, it holds that
\begin{equation}\label{lemma4.3}
\E^{\Q}\Big[ \big| \hat{w}_t^{i,k_n}- \hat{w}_s^{i,k_n} \big|^4 \Big] = \E^{\Q}\Big[\big|w_t^{i,k_n}-w_s^{i,k_n}\big|^4\Big] \leq C|t-s|^2\E^{\Q}\Big[\max\limits_{s\leq r\leq t} \big|\hat{w}_r^{i,k_n}\big|^4\Big]\leq\frac{C|t-s|^2}{N^4}.
\end{equation}
In view of (\ref{lemma4.2}), (\ref{boundw}), (\ref{lemma4.3}) and Lemma \ref{lemma4}, for $ r\in[{jT/2^{k_n}},(j+1)T/2^{k_n}]$, we have 
\begin{align}  
&\sum\limits_{i=1}^N\E^{\hat{\Q}}\Big[\big|(b,\sigma \sigma^{\text{T}})(\hat{X}_r^{i,k_n},\hat{\pi}_r^{N,k_n})-(b,\sigma \sigma^{\text{T}})(\hat{X}^{i,k_n}_{jT/2^{k_n}},\hat{\pi}_{jT/2^{k_n}}^{N,k_n})\big|\Big]\nonumber\\
\leq& C\sum\limits_{i=1}^N\E^{\hat{\Q}}\Big[\big|\hat{X}_r^{i,k_n}-\hat{X}^{i,k_n}_{jT/2^{k_n}}\big|^2\Big]^{\frac{1}{2}}+C\E^{\hat{\Q}}\Big[W_1\big(\hat{\pi}_r^{N,k_n},\hat{\pi}_{jT/2^{k_n}}^{N,k_n}\big)^2\Big]^{\frac{1}{2}} \leq C\sqrt{\frac{1}{2^{k_n}}}.\nonumber
\end{align}
Hence, it holds that
\be\nonumber
\lim\limits_{n\rightarrow \infty}J_3=0.
\ee

Next, we prove the convergence of $J_1$ and $J_2$.
In view of Lemma \ref{lemma1}, we have
\be\label{error}
W_1\big(\hat{\pi}_r^{N,k_n},\hat{\pi}_r^N\big)\leq \sum\limits_{i=1}^N\hat{w}_r^{i,k_n}\vee\hat{w}_r^{i}\big|\hat{z}_r^{i,k_n}-\hat{z}_r^i\big|\big|\hat{X}_r^i\big|+\sum\limits_{i=1}^N \hat{w}_r^{i,k_n}\big|\hat{X}_r^{i,k_n}-\tilde{X}_r^i\big|.
\ee
By the similar calculation that deduces (\ref{lemma4.2}) and (\ref{boundw}), we have
\be\nonumber
\E^{\Q}\Big[\big|\hat{w}_t^{i}\big|^8+\big|\hat{w}_t^{i,k_n}\big|^8+\big|\hat{X}_t^{i}\big|^8+\big|\hat{X}_t^{i,k_n}\big|^8\Big]\leq C,\quad \forall 1\leq i\leq N,\,n\geq 1.
\ee
By Cauchy's inequality, it follows from (\ref{error}) that, 
\begin{align}
\nonumber\E^{\hat{\Q}}\Big[W_1\big(\pi_r^{N,k_n},\pi_r^N\big)^{2}\Big]\leq &\sum\limits_{i,j=1}^N\E^{\hat{\Q}}\Big[\big|\hat{z}_r^{i,k_n}-\hat{z}_r^i\big|^{4}\Big]^{\frac{1}{2}}\E^{\hat{\Q}}\Big[\Big(\big|\hat{w}_r^{i,k_n}\big|^4+\big|\hat{w}_r^{i}\big|^4\Big)\big|\hat{X}_r^i\big|^4\Big]^{\frac{1}{2}}\\
\nonumber &+ \sum\limits_{i,j=1}^N  \E^{\hat{\Q}} \Big[ \big|\hat{X}_r^{i,k_n}-\tilde{X}_r^i \big|^2 \Big]^{\frac{1}{2}} \E^{\hat{\Q}} \Big[ \big| \hat{w}_r^{i,k_n} \big|^2 \Big]^{\frac{1}{2}}\nonumber\\
\leq &C \bigg(\sum\limits_{i=1}^N\E^{\hat{\Q}}\Big[\big|\hat{z}_r^{i,k_n}-\hat{z}_r^i\big|^4\Big]^{\frac{1}{2}}+\sum\limits_{i=1}^N\E^{\hat{\Q}}\Big[\big|\hat{X}_r^{i,k_n}-\hat{X}_r^i\big|^4\Big]^{\frac{1}{2}}\bigg).\nonumber
\end{align}  
Denote 
$$
T^{i,n}=\max\limits_{0\leq r\leq t}\left|\hat{X}_r^{i,k_n}-\hat{X}_r^{i}\right|^4+\max\limits_{0\leq r\leq t}\left|\hat{z}_r^{i,k_n}-\hat{z}_r^{i}\right|^4+\max\limits_{0\leq r\leq t}\left|\hat{L}_r^{i,k_n}-\hat{L}_r^{i}\right|^2.
$$
Due to the uniform convergence of $\{(\hat{X}_r^{i,k_n},\hat{L}_r^{i,k_n},\hat{z}_r^{i,k_n})\}_{n\geq 1}$, we can observe that $T^{i,n}$ convergence to 0  a.s. and $T^{i,n}$ is uniformly integrable. So we can get that 
\begin{align}
\sum\limits_{i=1}^N\E^{\hat{\Q}}\Big[\max\limits_{s\leq r\leq t}\big|\hat{X}_r^{i,k_n}-\hat{X}_r^{i}\big|\Big]\leq  \sum\limits_{i=1}^N\E^{\hat{\Q}}\Big[\max\limits_{s\leq r\leq t}\big|\hat{X}_r^{i,k_n}-\hat{X}_r^{i}\big|^4\Big]^{\frac{1}{4}} \rightarrow 0 \nonumber ,
\end{align}
and
\be\label{co1}
\max\limits_{0\leq r\leq t}\E^{\hat{\Q}}\Big[W_1\big(\pi_r^{N,k_n},\pi_r^N\big)\Big]\leq C\max\limits_{0\leq r\leq t}\E^{\hat{\Q}}\Big[W_1\big(\pi_r^{N,k_n},\pi_r^N\big)^2\Big]^{\frac{1}{2}} \rightarrow 0.
\ee

By the similar calculation that deduces (\ref{existence3.5}), we obtain
\begin{align}\label{existence6}  
&\int_s^t\sum\limits_{i=1}^N\E^{\hat{\Q}}\Big[\big|(b,\sigma \sigma^{\text{T}})(\hat{X}_r^{i,k_n},\hat{\pi}_r^{N,k_n})-(b,\sigma \sigma^{\text{T}})(\hat{X}_r^{i},\hat{\pi}_r^{N})\big|\Big]\D r\nonumber\\
\leq& \, C\sum\limits_{i=1}^N\E^{\hat{\Q}}\bigg[\int_s^t\Big|\hat{X}_r^{i,k_n}-\hat{X}_r^{i}\Big|\D r\bigg]+C\,\E^{\hat{\Q}}\bigg[\int_s^t W_1\big(\hat{\pi}_r^{N,k_n},\hat{\pi}_r^N\big)\D r\bigg]\nonumber\\
\leq &\, C\sum\limits_{i=1}^N\E^{\hat{\Q}}\left[\max\limits_{s\leq r\leq t}\left|\hat{X}_r^{i,k_n}-\hat{X}_r^{i}\right|\right]+C\max\limits_{s\leq r\leq t}\E^{\hat{\Q}}\Big[W_1\big(\hat{\pi}_r^{N,k_n},\hat{\pi}_r^N\big)\Big].
\end{align}
It follows from (\ref{existence6}) that
\begin{align}\nonumber
J_1=\int_s^t\sum\limits_{i=1}^N\E^{\hat{\Q}}\Big[\big|b(\hat{X}_r^{i,k_n},\pi_r^{N,k_n})-b(\hat{X}_r^{i},\pi_r^{N})\big|+\big|\sigma \sigma^{\text{T}}(\hat{X}_r^{i,k_n},\pi_r^{N,k_n})-\sigma \sigma^{\text{T}}(\hat{X}_r^{i},\pi_r^{N})\big|\Big]\D r\rightarrow 0.
\end{align}
Similarly, by the integrable of $|L_t^i|^4$ and boundedness of $h$, we have
\begin{align}
J_2=& \int_s^t\sum\limits_{i=1}^N\E^{\hat{\Q}}\Big[\left||h|^2(\hat{X}_r^{i,k_n},\hat{\pi}_r^{N,k_n})- |h|^2(\hat{X}^{i}_r,\hat{\pi}_r^N)\right|\Big|\hat{L}_r^i\Big|^2+\Big|\big|\hat{L}_r^{i,k_n}\big|^2-\big|\hat{L}_r^i\big|^2\Big|\Big]\D r\nonumber\\
\leq&\int_s^t\sum\limits_{i=1}^N\E^{\hat{\Q}}\Big[\big|
 h(\hat{X}_r^{i,k_n},\hat{\pi}_r^{N,k_n})- h(\hat{X}^{i}_r,\hat{\pi}_r^N)\big|^2\Big]^{\frac{1}{2}}+\E^{\hat{\Q}}\Big[\big|\hat{L}_r^{i,k_n}-\hat{L}_r^i\big|^2\Big]^{\frac{1}{2}}\D r\nonumber\\
\leq& C\bigg\{\sum\limits_{i=1}^N\E^{\hat{\Q}}\Big[\max\limits_{s\leq r\leq t}\big|\hat{X}_r^{i,k_n}-\hat{X}_r^{i}\big|^2 + \max\limits_{s\leq r\leq t}\big|\hat{L}_r^{i,k_n}-\hat{L}_r^i\big|^2 \Big]^{\frac{1}{2}}+\max\limits_{s\leq r\leq t}\E^{\hat{\Q}}\Big[W_1\big(\hat{\pi}_r^{N,k_n},\hat{\pi}_r^N\big)^2\Big]^{\frac{1}{2}} \bigg\}.\nonumber
\end{align}
By the uniform convergence of $T^{i,k_n}$ and (\ref{co1}), we can deduce the convergence of $J_2$.

Finally, the weak existence of the solution to SDE (\ref{particle}) can be proved if we show the convergence of $J_4$, which is clear if we notice the uniform convergence of $(\hat{X}^{(k_n)},\hat{L}^{k_n})$ and $f\in C_b^2(\mathbb{R}^{2N})$.
\end{proof}

By the results of Theorem \ref{theorem1} and Theorem \ref{theorem4.3}, we can complete the proof of Theorem \ref{Theorem exist-unique} by Yamada-Watanabe Theorem.

\subsection{Proof of particle approximation}
The following lemma guarantees the well-posedness of the limiting process.
\begin{lemma}\label{well-poseness-MV}
There exists a probability space $({\Omega},\mathcal{{F}},\mathbb{{Q}},\{\mathcal{{F}}_t\}, \{{W}_t^i\}_{i=1}^\infty, {Y} )$ such that the following mean-field SDE:
\be\label{sde-Vlasov}
\begin{cases}
\D \bar{X}_t^i=b\left(\bar{X}_t^i,\pi_t\right)\D t+\sigma\left(\bar{X}_t^i,\pi_t\right)\D W_t^i,\qquad\bar{X}_0^i=x,\\[0.3cm]
\D \bar{L}_t^i=h(\bar{X}_t^i,\pi_t)\bar{L}_t^i\D Y_t,\qquad\bar{L}_0^i=1 ,\ \ \ i=1,2,...,N,\ \ \ t\in[0,T],
\end{cases}
\ee
has a unique solution $\{ (\bar{X}^{i},\bar{L}^{i} )\}_{i=1}^{\infty}$ in this probability space, where $\{W_t^i\}_{i=1}^\infty, Y$ are the independent Brownian motions in this probability space with the natural filtration $\mathcal{F}_t=\sigma\{ ( W^i_s, Y_s ):s\leq t,\,\,1\leq i\}$,  and $\pi_t$ is the random probability measure such that for all $i\geq 1$,
\begin{equation}\label{2312251}
\int_{\mathbb{R}}\phi(x)\mathrm{d}\pi_t=\frac{\E^{\Q}\left[\bar{L}_t^i\phi(\bar{X}_t^i)|\mathcal{F}_t^Y\right]}{\E^{\Q}\left[\bar{L}_t^i|\mathcal{F}_t^Y\right]},\qquad\forall\,\phi\in C^{\infty}_c(\mathbb{R}^n).
\end{equation}
\end{lemma}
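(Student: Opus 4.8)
The plan is to read Lemma~\ref{well-poseness-MV} as the reference-measure reformulation of the CMVSDE~\eqref{partial} with $\sigma_2=0$, whose well-posedness was established in~\cite{Buckdahn}, and then to upgrade a single solution to a countable family of conditionally i.i.d.\ copies driven by the prescribed noises $\{W^i\}_{i\ge1}$ and $Y$. First I would invoke~\cite{Buckdahn} to obtain, on some filtered space carrying independent Brownian motions $(B,W^1)$, a strong solution $(X,Y)$ of~\eqref{partial} with $\sigma_2=0$ and $\pi_t=\Prob[X_t\in\cdot\,|\mathcal{F}_t^Y]$; boundedness of $b,\sigma$ gives $\E[\max_{t\le T}|X_t|^p]<\infty$ for every $p\ge1$. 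Setting $L_t=\exp\big(\int_0^t h(X_s,\pi_s)\,\D Y_s-\tfrac12\int_0^t|h(X_s,\pi_s)|^2\,\D s\big)$ and defining $\Q$ by $\D\Q/\D\Prob|_t=1/L_t$, the boundedness of $h$ yields $\E^{\Q}[\max_{t\le T}(|L_t|^p+|L_t|^{-p})]<\infty$ for all $p\ge1$ (for the negative moments, $L_t^{-1}=\mathcal{E}(-\int_0^\cdot h\,\D Y)_t\cdot\exp(\int_0^t|h|^2\,\D s)$ and the last factor is bounded by $e^{K^2T}$). Under $\Q$ the process $Y$ is a Brownian motion, and since $W^1$ is orthogonal to $Y$ the pair $(W^1,Y)$ remains a Brownian motion under $\Q$; moreover, under $\Q$, $\D X_t=b(X_t,\pi_t)\,\D t+\sigma(X_t,\pi_t)\,\D W_t^1$ and $\D L_t=h(X_t,\pi_t)L_t\,\D Y_t$, while the Kallianpur--Striebel identity~\eqref{2312261} becomes exactly~\eqref{2312251} with $i=1$. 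Hence $(\bar X^1,\bar L^1):=(X,L)$ solves~\eqref{sde-Vlasov}--\eqref{2312251} for $i=1$, with $\pi$ adapted to $\mathcal{F}^Y$.

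Next I would enlarge the probability space to carry, in addition, a sequence $\{W^i\}_{i\ge2}$ of independent Brownian motions independent of everything constructed so far, so that $\{W^i\}_{i\ge1}$ are i.i.d.\ and independent of $Y$. Freezing the $\mathcal{F}^Y$-adapted flow $\pi$ obtained above, for each $i\ge2$ the decoupled system $\D\bar X_t^i=b(\bar X_t^i,\pi_t)\,\D t+\sigma(\bar X_t^i,\pi_t)\,\D W_t^i$, $\D\bar L_t^i=h(\bar X_t^i,\pi_t)\bar L_t^i\,\D Y_t$ with $\bar X_0^i=x$, $\bar L_0^i=1$ has coefficients Lipschitz in $x$ with an $\mathcal{F}^Y$-adapted random parameter $\pi_t$, hence admits a unique strong solution with moments of all orders by classical SDE theory; for $i=1$ I keep the solution of the previous step, which solves the same equation. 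Since $\pi$ is a measurable functional of the path of $Y$ alone and each $W^i$ has the same law as $W^1$ and is independent of $Y$, the triples $(W^i,Y,\pi)$ are identically distributed, so pathwise uniqueness forces $(\bar X^i,\bar L^i,Y)$ to have the same law as $(\bar X^1,\bar L^1,Y)$ for every $i$. Consequently the right-hand side of~\eqref{2312251} does not depend on $i$ and equals $\int\phi\,\D\pi_t$, which establishes existence.

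For uniqueness, suppose $(\{(\bar X^i,\bar L^i)\}_i,\pi)$ and $(\{(\bar X'^i,\bar L'^i)\}_i,\pi')$ both solve the system on the given space. Writing $\delta X_t=\bar X_t^1-\bar X'^1_t$ and $\delta L_t=\bar L_t^1-\bar L'^1_t$, Itô's formula and the Lipschitz bounds control $\E^{\Q}[|\delta X_t|^{2q}+|\delta L_t|^{2q}]$ by its time integral plus $\E^{\Q}[W_1(\pi_t,\pi'_t)^{2q}]$ for a large integer $q$; a splitting of the denominators as in Lemma~\ref{lemma1} bounds $W_1(\pi_t,\pi'_t)$ by $\mathcal{F}_t^Y$-conditional expectations of $|\delta X_t|$ and $|\delta L_t|$ weighted by $\bar L^1,\bar L'^1$, and combining this with the uniform moment bounds on $\bar L^1,\bar L'^1$ and their reciprocals (together with Jensen's inequality $\E^{\Q}[\bar L_t^1|\mathcal{F}_t^Y]^{-p}\le\E^{\Q}[(\bar L_t^1)^{-p}|\mathcal{F}_t^Y]$) closes the Grönwall loop and forces $\delta X\equiv\delta L\equiv0$, $\Q$-a.s.; then $\pi\equiv\pi'$, and with $\pi$ frozen the pathwise uniqueness of the decoupled SDEs gives $(\bar X^i,\bar L^i)=(\bar X'^i,\bar L'^i)$ for all $i$. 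Alternatively, this uniqueness is inherited from the well-posedness of~\eqref{partial} in~\cite{Buckdahn} by transferring the two solutions through the Girsanov change of measure.

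The main obstacle is twofold. First, one must secure sufficient integrability of the exponential weights $\bar L^i$ \emph{and} their reciprocals $1/\bar L^i$ --- which rests squarely on the boundedness of $h$ --- so that the conditional expectations in~\eqref{2312251} and the Kallianpur--Striebel formula are legitimate and the normalising denominators stay a.s.\ positive. Second, the exchangeability bookkeeping must be handled carefully enough to guarantee that a \emph{single} flow $\pi$ simultaneously represents the weighted conditional law for all $i$. Should one prefer not to cite~\cite{Buckdahn}, the existence of $\pi$ could instead be produced by a Banach fixed-point argument for the map sending an $\mathcal{F}^Y$-adapted flow $\mu$ to $\big(\E^{\Q}[\bar L^1[\mu]_t\,\delta_{\bar X^1[\mu]_t}\,|\,\mathcal{F}_t^Y]\,/\,\E^{\Q}[\bar L^1[\mu]_t\,|\,\mathcal{F}_t^Y]\big)_{t\in[0,T]}$, where $(\bar X^1[\mu],\bar L^1[\mu])$ solves the decoupled SDE with coefficient $\mu$; there the genuine difficulty is that $\bar L$ is not uniformly bounded, so contractivity in the Wasserstein metric calls for the multiplier/weighting device used elsewhere in this paper rather than a naive estimate.
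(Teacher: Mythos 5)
Your proposal is correct and takes essentially the same route as the paper: the paper does not supply a proof of Lemma~\ref{well-poseness-MV} but refers to similar arguments in~\cite{Buckdahn} or~\cite{MR1705602}, and then records in its remarks the Girsanov transfer to~\eqref{MV-2} and the conditional i.i.d.\ structure of $\{(\bar X^i,\bar L^i)\}$ under $\mathbb{P}(\cdot\,|\mathcal F^Y_t)$ that you also exploit. Your write-up fleshes out precisely this delegated strategy --- well-posedness from~\cite{Buckdahn}, Girsanov change of measure, enlargement of the probability space with i.i.d.\ $W^i$, and exchangeability to make~\eqref{2312251} independent of $i$.
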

The proof of lemma \ref{well-poseness-MV} is not difficult to obtain by similar arguments in \cite{Buckdahn} or \cite{MR1705602}. We give some remarks here about the limiting process (\ref{sde-Vlasov}). In fact, the solution $\{(\bar{X}_t^i,\bar{L}_t^i)\}_{i=1}^\infty$ of (\ref{sde-Vlasov}) has the same distribution under $\mathbb{P}(\cdot|\mathcal{F}_t^Y)$. Therefore $\forall\, i\geq 1$, we have
$$
\int_{\mathbb{R}}\phi(x)\mathrm{d}\pi_t=\frac{\E^{\Q}\left[\bar{L}_t^i\phi(\bar{X}_t^i)|\mathcal{F}_t^Y\right]}{\E^{\Q}\left[\bar{L}_t^i|\mathcal{F}_t^Y\right]},\quad\forall\,\phi\in C^{\infty}_c(\mathbb{R}^n).
$$
This symmetry is used in our following proof. 

What's more, we point out that we can construct the weak solution of the partially observed system from the solution of (\ref{sde-Vlasov}). Given $i\geq 1$, we denote 
$$
\frac{\D \mathbb{P}^i}{\D \Q}=L_T^i.
$$ 
Using Girsanov theorem, we can easily check that $(\bar{X}^i,Y)$ is the solution to
\be\label{MV-2}
\begin{cases}
\D \bar{X}_t^i=b\left(\bar{X}_t^i,\pi_t\right)\D t+\sigma\left(\bar{X}_t^i,\pi_t\right)\D W_t^i,\quad\bar{X}_0^i=x,\\  
\D Y_t=h\left(\bar{X}_t^i,\pi_t\right)\D t +\D B_t, \quad Y_0=0 ,\ \ \ t\in[0,T],
\end{cases}
\ee
where $W_t^i$, $B_t$ are independent standard Brownian motion under $(\Omega, \mathcal{F},\mathbb{P}^i)$. By Kallianpur-Striebel formula, we have
$$
\int_{\mathbb{R}}\phi(x)\mathrm{d}\pi_t=\E^{\mathbb{P}^i}\left[\phi(\bar{X}_t^i)|\mathcal{F}_t^Y\right],\quad\forall\,\phi\in C^{\infty}_c(\mathbb{R}).
$$
In other words, $(\Omega, \mathcal{F},\mathbb{P}^i,\{\mathcal{F}_t\},\bar{X}_t^i,Y_t,W_t^i,B_t)$ is a weak solution of the conditional McKean-Vlasov SDE (\ref{MV-2}).

Now we begin our proof of particle approximation.
For a given $N$, we simply denote $\{(X^{i},L^{i})\}_{i=1}^N$ instead of $\{(X^{i,N},L^{i,N})\}_{i=1}^N$. Denote 
$$
\Delta \bar{X}_t^i=X_t^i-\bar{X}_t^i,\quad \Delta \bar{z}_t^i=z_t^i-\bar{z}_t^i,\quad \Delta \sigma_t^i=\sigma(X_t^i,\pi_t^N)-\sigma(\bar{X}_t^i,\pi_t),\quad \Delta \bar{M}_t^i=M(X_t^i,\pi_t^N)-M(\bar{X}_t^i,\pi_t).
$$

\begin{lemma}   \label{Wasser-empirical-bound}
\begin{align*} \max_{t\in[0,T]}\E^{\Q}\Big[W_1\big(\pi_t^N,\bar{\pi}_t^N\big)^2\Psi_t\Big]\leq CN^{-\frac{8}{7n+8}},
\end{align*}
where $C=C(K,T,n)$ is the constant.
\end{lemma}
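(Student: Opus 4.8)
The strategy has four moving parts: (1) reduce $W_1(\pi_t^N,\bar\pi_t^N)$ to per-particle discrepancies via Lemma~\ref{lemma1}; (2) run the It\^o estimates of Lemma~\ref{lemma3} with the drift weight $\gamma_s$ chosen to reproduce exactly the multiplier $\Psi_t$; (3) absorb the resulting quadratic self-interaction term using the very design of $\Psi_t$; and (4) feed in Lemma~\ref{convergence rate} for the genuinely ``propagation-of-chaos'' error and finish by exchangeability.

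Concretely, I set $\gamma_s:=-\alpha\,\Theta_s$ with $\Theta_s:=N\sum_{k=1}^N\bigl(N^{-2}+(|w_s^k|^2+|\bar w_s^k|^2)(1+|X_s^{k,N}|^2)\bigr)$, so that $\Gamma_t=\Psi_t$ in Lemma~\ref{lemma3}. Applying that lemma to the couples $(X^i,\bar X^i)$ and $(z^i,\bar z^i)$ — i.e.\ taking $\tilde\pi=\pi$, $\tilde X^i=\bar X^i$ in~\eqref{sde2}, which is precisely the synchronous-coupling system~\eqref{sde-Vlasov} — then summing over $i$, dividing by $N$, and adding the two inequalities, I obtain for $\mathcal E_t:=\frac1N\sum_i(|\Delta\bar X_t^i|^2+|\Delta\bar z_t^i|^2)\Psi_t$ an estimate of the form $\mathcal E_t\le C\int_0^t W_1(\pi_s^N,\pi_s)^2\Psi_s\,\D s+\int_0^t(C+\gamma_s)\mathcal E_s\,\D s+\mathcal M_t$, where $\mathcal M_t$ is a stochastic-integral term; boundedness of $\sigma,M$ together with the standard moment bounds $\E^\Q[\sup_{t}(|X_t^i|^p+|\bar X_t^i|^p+|z_t^i|^p)]<\infty$ make $\mathcal M_t$ a true martingale with $\E^\Q[\mathcal M_t]=0$.

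The heart of the argument is controlling $W_1(\pi_s^N,\pi_s)^2\Psi_s$. By Lemma~\ref{lemma1}, Cauchy--Schwarz, and the pointwise identity $\sum_k(|w_s^k|^2+|\bar w_s^k|^2)(1+|X_s^{k,N}|^2)\le\Theta_s/N$, one gets the pathwise bound $W_1(\pi_s^N,\bar\pi_s^N)^2\Psi_s\le C\,\Theta_s\mathcal E_s$; combining with $W_1(\pi_s^N,\pi_s)^2\le 2W_1(\pi_s^N,\bar\pi_s^N)^2+2W_1(\bar\pi_s^N,\pi_s)^2$ turns the previous inequality into $\mathcal E_t\le\int_0^t(C+\gamma_s+C\Theta_s)\mathcal E_s\,\D s+C\int_0^t W_1(\bar\pi_s^N,\pi_s)^2\Psi_s\,\D s+\mathcal M_t$. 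Since $\Theta_s\ge 1$, choosing $\alpha=\alpha(K,T)$ large enough forces $C+\gamma_s+C\Theta_s\le 0$, so that term is simply dropped; taking $\E^\Q$, using $\Psi_s\le 1$ and Lemma~\ref{convergence rate} — through $\E^\Q[W_1(\bar\pi_s^N,\pi_s)^2]\le\E^\Q[W_1(\bar\pi_s^N,\pi_s)^4]^{1/2}\le CN^{-\frac8{7n+8}}$, which itself exploits the exchangeability of $\{(\bar X^i,\bar L^i)\}_i$ under $\Q[\,\cdot\,|\mathcal F_t^Y]$ — yields $\E^\Q[\mathcal E_t]\le CN^{-\frac8{7n+8}}$ uniformly in $t$; by exchangeability this also bounds each per-particle term $\E^\Q[(|\Delta\bar X_t^i|^2+|\Delta\bar z_t^i|^2)\Psi_t]$.

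Finally I pass from this to $\E^\Q[W_1(\pi_t^N,\bar\pi_t^N)^2\Psi_t]$: starting again from Lemma~\ref{lemma1}, one uses the pathwise bound $W_1(\pi_t^N,\bar\pi_t^N)^2\Psi_t\le C\,\Theta_t\mathcal E_t$ together with Cauchy--Schwarz, the uniform moment estimates $\E^\Q[\Theta_t^p]\le C$ and $\E^\Q[|N\bar w_t^i|^p]\le C$ (cf.~\eqref{boundw}), and a fourth-moment version of the per-particle estimate (obtained by rerunning the same multiplier argument on fourth powers, with $\Psi_t$ correspondingly strengthened), to conclude $\max_{t\in[0,T]}\E^\Q[W_1(\pi_t^N,\bar\pi_t^N)^2\Psi_t]\le CN^{-\frac8{7n+8}}$. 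The main obstacle throughout is the self-referential appearance of $W_1(\pi_s^N,\bar\pi_s^N)^2$ inside a Gr\"onwall-type inequality: this quantity is not Lipschitz-controllable in the usual way — the particle system is not Lipschitz even for bounded smooth coefficients — and only the tailored multiplier $\Psi_t$, whose logarithmic derivative is exactly the quantity $\Theta_s$ dominating the self-interaction, makes the absorption possible; a secondary technical point is verifying that the stochastic integrals are genuine, not merely local, martingales despite the unbounded state.
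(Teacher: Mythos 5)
Your reduction to per-particle discrepancies (Lemma~\ref{lemma1}), the choice $\gamma_s=-\alpha\Theta_s$ so that $\Gamma_t=\Psi_t$, the pathwise absorption of $W_1(\pi_s^N,\bar\pi_s^N)^2\Psi_s\le 2\Theta_s\mathcal E_s$ by the multiplier, the triangle inequality in $W_1$, and the use of Lemma~\ref{convergence rate} are all sound, and through the first-moment estimate $\E^\Q[\mathcal E_t]\le CN^{-8/(7n+8)}$ the argument is essentially parallel to the paper's. The gap is the last step. You want $\E^\Q\bigl[W_1(\pi_t^N,\bar\pi_t^N)^2\Psi_t\bigr]$, and your pathwise bound leaves you with $\E^\Q[\Theta_t\mathcal E_t]$. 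Cauchy--Schwarz turns this into $\E^\Q[\Theta_t^2]^{1/2}\E^\Q[\mathcal E_t^2]^{1/2}$, and $\E^\Q[\mathcal E_t^2]$ is \emph{not} controlled by what you have proved: it is a fourth-moment quantity in the increments, weighted by $\Psi_t^2$, whose log-derivative is $-2\alpha\Theta_t$, i.e.\ a \emph{different} multiplier. ``Rerunning the same multiplier argument on fourth powers, with $\Psi_t$ correspondingly strengthened'' is a genuine additional piece of work (new quartic cross-variation terms, a different $\alpha$, and a consistency question about which $\Psi_t$ ultimately appears in the statement), and you cannot simply cite it; as stated the proof does not close.

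The paper sidesteps this entirely and the difference is instructive. Rather than proving a per-particle bound on $\mathcal E_t$ and then trying to reattach the weight factor $\Theta_t$ at the very end, the paper applies It\^o directly to the \emph{coupled} quantity $B_t^j\bigl(|\Delta\bar z_t^k|^2+|\Delta\bar X_t^k|^2\bigr)$, with
\[
B_t^j=\bigl(1+|X_t^j|^2\bigr)\bigl(|\bar w_t^j|^2+|w_t^j|^2\bigr)\Psi_t,
\]
which by the analogue of~\eqref{Westi} pointwise dominates $W_1(\pi_t^N,\bar\pi_t^N)^2\Psi_t$ upon summing over $j,k$. The multiplier then absorbs all quadratic self-interaction terms \emph{inside} the It\^o expansion, and the only place Cauchy--Schwarz is applied is on the source term $\E^\Q[B_s^j\,W_1(\bar\pi_s^N,\pi_s)^2]\le\E^\Q[\max_s|B_s^j|^2]^{1/2}\E^\Q[\max_s W_1(\bar\pi_s^N,\pi_s)^4]^{1/2}$. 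This needs only the (easy, via BDG) second-moment bound $\E^\Q[\max_s|B_s^j|^2]\le C/N^4$ and Lemma~\ref{convergence rate}; no fourth moment of $\mathcal E_t$ is ever required. If you want to repair your own route, you must either genuinely carry out the quartic Gr\"onwall argument (and track the resulting multiplier all the way back through Theorem~\ref{multi}), or, better, adopt the paper's device of keeping the weight $B_t^j$ inside the It\^o differential from the start.
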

\begin{proof}
    With the similar calculation in (\ref{u-5})-(\ref{unique-16}), we obtain
    \begin{align}\label{Westi}
W_1\Big(\pi_t^N,\bar{\pi}_t^N\Big)^2\Psi_t\leq 2\sum\limits_{j,k=1}^N\big| X_t^j\big|^2\Big(\big|w_t^j\big|^2+\big|\bar{w}_t^j\big|^2\Big)\big|\Delta \bar{z}_t^k\big|^2\Psi_t+2\sum\limits_{j,k=1}^N \big|\bar{w}_t^j\big|^2\big|\Delta \bar{X}_t^k\big|^2\Psi_t.
    \end{align}
By Ito's formula, 
\begin{align*}
   &\mathrm{d} \Big(1+\big|X_t^j\big|^2\Big)\Big(\big|\bar{w}_t^j\big|^2+\big|w_t^j\big|^2\Big)\Big(\big|\Delta \bar{z}_t^k\big|^2+\big|\Delta \bar{X}_t^k\big|^2\Big)\Psi_t\\
   = &2\Psi_t\Big(\big|\bar{w}_t^j\big|^2+\big|w_t^j\big|^2\Big)\Big(\big|\Delta \bar{z}_t^k\big|^2+\big|\Delta \bar{X}_t^k\big|^2\Big)\big(X_t^j\big)^\text{T}b_t^j\mathrm{d}t\\
&+2\Psi_t\Big(1+\big|X_t^j\big|^2\Big)\Big(\big|w_t^j\big|^2H_t^j+\big|\bar{w}_t^j\big|^2\bar{H}_t^j\Big)\Big(\big|\Delta \bar{z}_t^k\big|^2+\big|\Delta \bar{X}_t^k\big|^2\Big)\mathrm{d}t\\
 &+2\Psi_t\Big(1+\big|X_t^j\big|^2\Big)\Big(\big|\bar{w}_t^j\big|^2+\big|w_t^j\big|^2\Big)\Big(\Delta \bar{z}_t^k\Delta \bar{R}_t^k+ \big(\Delta\bar{X}_t^k\big)^T\Delta \bar{b}_t^k\Big)\mathrm{d}t\\
   &+\Psi_t\Big(\big|\bar{w}_t^j\big|^2+\big|w_t^j\big|^2\Big)\Big(\big|\Delta \bar{z}_t^k\big|^2+\big|\Delta \bar{X}_t^k\big|^2\Big)\big|\sigma_t^j\big|^2\mathrm{d}t\\   &+\Psi_t\Big(1+\big|X_t^j\big|^2\Big)\big|w_t^jM_t^j+\bar{w}_t^j\bar{M}_t^j\big|^2\Big(\big|\Delta \bar{z}_t^k\big|^2+\big|\Delta \bar{X}_t^k\big|^2\Big)\mathrm{d}t\nonumber\\
   &+4\Psi_t\Delta \bar{z}_t^k\Big(1+\big|X_t^j\big|^2\Big)\Big(w_t^jM_t^j+\bar{w}_t^j\bar{M}_t^j\big)^{\text{T}}\Delta \bar{M}_t^k\mathrm{d}t\nonumber\\
   &+\Big(1+\big|X_t^j\big|^2\Big)\Big(\big|\bar{w}_t^j\big|^2+\big|w_t^j\big|^2\Big)\Big(\big|\Delta \bar{z}_t^k\big|^2+\big|\Delta \bar{X}_t^k\big|^2\Big)\mathrm{d}\Psi_t+\mathrm{d}V_t,
\end{align*}
where $V$ is a square-integrable martingale.

Denote 
$$
B_t^{j}=\Big(1+\big|X_t^j\big|^2\Big)\Big(\big|\bar{w}_t^j\big|^2+\big|w_t^j\big|^2\Big)\Psi_t.
$$
By BDG inequality, 
\begin{align}\label{boundB}
   \E^{\Q}\Big[\max_{t\in[0,T]}\big|B_t^j\big|^2\Big]\leq \E^{\Q}\Big[\max_{t\in[0,T]}\Big(1+\big|X_t^j\big|^2\Big)^4\Big]^{\frac{1}{2}}\E^{\Q}\Big[\max_{t\in[0,T]}\Big(\big|\bar{w}_t^j\big|^2+\big|w_t^j\big|^2\Big)^4\Big]^{\frac{1}{2}}\leq \frac{C}{N^4}.
\end{align}
By the Lipschitz continuity and boundedness of $b,\sigma,M,H$, 
\begin{align}    \label{431}
&\E^{\Q}\Big[B_t^{j}\Big(\big|\Delta \bar{z}_t^k\big|^2+\big|\Delta \bar{X}_t^k\big|^2\Big)\Big]\nonumber \\
\leq &C\int_0^t \E^{\Q}\Big[B_s^j\Big(\big|\Delta \bar{z}_s^k\big|^2+\big|\Delta \bar{X}_s^k\big|^2\Big)\Big]\mathrm{d}s+C\int_0^t \E^{\Q}\Big[B_s^jW_1\big(\pi_s^N,\pi_s\big)^2\Big]\mathrm{d}s\nonumber\\
&-\alpha N\int_0^t \E^{\Q}\bigg[\sum\limits_{i=1}^N \bigg[\frac{1}{N^2}+\Big(\big|w_s^i\big|^2+\big|\bar{w}_s^i\big|^2\Big)\Big(1+\big|\tilde{X}_s^i\big|^2\Big)\bigg] \Big(\big|\Delta \bar{z}_s^k\big|^2+\big|\Delta \bar{X}_s^k\big|^2\Big)B_s^j\bigg]\mathrm{d}s.
\end{align}

By the triangle inequality of Wasserstein distance, 
\begin{align}\label{Wes2}
     W_1\left(\pi_s^N,\pi_s\right)^2\leq 2W_1\left(\pi_s^N,\bar{\pi}^N_s\right)^2+2W_1\left(\bar{\pi}_s^N,\pi_s\right)^2.
     \end{align}
Taking $\alpha$ large enough and combing (\ref{Westi}, (\ref{431}) and (\ref{Wes2}), we obtain 
\begin{align*}
    \E^{\Q}\Big[\sum\limits_{j,k=1}^N B_t^{j}\Big(\big|\Delta \bar{z}_t^k\big|^2+\big|\Delta \bar{X}_t^k\big|^2\Big)\Big]&\leq C\int_0^t N\E^{\Q}\Big[\sum_{j=1}^NB_s^jW_1\big(\bar{\pi}_s^N,\pi_s\big)^2\Big]\mathrm{d}s  \\
   &\leq CN\sum_{j=1}^N\E^{\Q}\Big[\max_{s\in[0,t]}\big|B_s^j\big|^2\Big]^{\frac{1}{2}}\E^{\Q}\Big[\max_{s\in[0,t]}W_1\big(\bar{\pi}_s^N,\pi_s\big)^4\Big]^{\frac{1}{2}} .
\end{align*}

Then it follows from  (\ref{Westi}), (\ref{boundB}) and Lemma \ref{convergence rate} that 
\begin{align*}   \E^{\Q}\Big[W_1\Big(\pi_t^N,\bar{\pi}_t^N\Big)^2\Psi_t\Big]\leq  \E^{\Q}\Big[\sum\limits_{j,k=1}^N B_t^{j}\Big(\big|\Delta \bar{z}_t^k\big|^2+\big|\Delta \bar{X}_t^k\big|^2\Big)\Big]\leq CN^{-\frac{8}{7n+8}},
\end{align*}
the proof is completed.
\end{proof}

\begin{proof}[Proof of Theorem \ref{multi}]
According to Lemma \ref{lemma3}, we have
\begin{align*}
       \big|\Delta \bar{X}_t^i\big|^2\Psi_t
\leq  C\int_0^t W_1\left(\pi_s^N,\pi_s\right)^2\Psi_s\D s+C\int_0^t\big|\Delta \bar{X}_s^i\big|^2\Psi_s\D s+C\int_0^t\Psi_s\Big( \Delta \bar{X}_s^{i}\Big)^{\text{T}} \Delta\bar{\sigma}_s^i\D W_s^i .
\end{align*}
By the similar calculation in (\ref{u-4}), (\ref{u-5}),(\ref{unique-16}) and the
we have
\begin{align}\label{422}
&\frac{1}{N}\sum\limits_{i=1}^N\left|\Delta \bar{X}_t^i\right|^2\Psi_t\nonumber\\
\leq & \, C\int_0^t \bigg(1+N\sum\limits_{k=1}^N\big|\bar{w}_s^k\big|^2\bigg)\frac{1}{N}\sum\limits_{i=1}^N\big|\Delta \bar{X}_s^i\big|^2 \Psi_s \D s+\frac{C}{N}\sum_{i=1}^N\int_0^t\Big(\Delta \bar{X}_s^i\Big)^{\text{T}} \Delta\bar{\sigma}_s^i\Psi_s\D W_s^i\nonumber\\
& + C\int_0^t\sum\limits_{k=1}^N \Big(\big|w_s^k\big|^2+\big|\bar{w}_s^k\big|^2\Big)\big|X_s^k\big|^2 \sum\limits_{i=1}^N\big|\Delta \bar{z}_s^i\big|^2\Psi_s\D s+C\int_0^tW_1\big(\pi_s^N,\pi_s\big)^2\Psi_s\mathrm{d}s\nonumber\\
& -\alpha \int_0^t \sum\limits_{k=1}^N \bigg[\frac{1}{N^2}+\Big(\big|w_s^k\big|^2+\big|\bar{w}_s^k\big|^2\Big)\Big(1+\big|X_s^k\big|^2\Big)\bigg]\sum\limits_{i=1}^N\big|\Delta \bar{X}_s^i\big|^2\Psi_s\D s.
\end{align}
Similarly, we obtain that
\begin{align}\label{433}
\frac{1}{N}\sum\limits_{i=1}^N\big| \Delta\bar{z}_t^i\big|^2\Psi_t \leq& \,C \int_0^t  \Big(1+N\sum\limits_{i=1}^N\big|\bar{w}_s^i\big|^2\Big)\frac{1}{N}\sum\limits_{i=1}^N\big| \Delta \bar{X}_s^i\big|^2\Psi_s \D s+\frac{C}{N}\sum_{i=1}^N\int_0^t\Delta \bar{z}_s^i\Psi_s\Big(\Delta \bar{M}_s^i\Big)^{\text{T}}\D Y_s\nonumber\\
    &+\big(C-\alpha \big) \int_0^t \sum\limits_{k=1}^N \bigg[\frac{1}{N^2}+\Big(\big|w_s^k\big|^2+\big|\bar{w}_s^k\big|^2\Big)\Big(1+\big|X_s^k\big|^2\Big)\bigg]\sum\limits_{i=1}^N\big|\Delta \bar{z}_s^i\big|^2\Psi_s\D s\nonumber\\
     &+C\int_0^tW_1\big(\pi_s^N,\pi_s\big)^2\Psi_s\D s.
\end{align}
Taking $\alpha$ large enough, we get that
\begin{align}\label{boundxz}
&\frac{1}{N}\sum\limits_{i=1}^N\Big[\sup_{r\in[0,t]}\big|\Delta\bar{X}_r^i\big|^2\Psi_r+\sup_{r\in[0,t]}\big|\Delta \bar{z}_r^i\big|^2\Psi_r\Big]\nonumber\\
\leq & C\sup_{r\in[0,t]}\int_0^r W_1\left(\bar{\pi}_s^N,\pi_s\right)^2 \Psi_s \D s+\frac{C}{N}\sum_{i=1}^N\sup_{r\in[0,t]}\int_0^r\Delta \bar{z}_s^i\Psi_s\Big(\Delta \bar{M}_s^i\Big)^{\text{T}}\D Y_s\nonumber\\
&+ \frac{C}{N}\sum_{i=1}^N\sup_{r\in[0,t]}\int_0^r\Big(\Delta \bar{X}_s^i\Big)^{\text{T}} \Delta\bar{\sigma}_s^i\Psi_s\D W_s^i.
\end{align}

With the help of BDG inequality, we obtain
\begin{align}\label{boundz}
\E^\mathbb{Q}\Big[\sup_{r\in[0,t]}\Big|\int_0^r\Delta \bar{z}_s^i\Psi_s\Big(\Delta \bar{M}_s^i\Big)^{\text{T}}\D Y_s\Big|\Big]\leq &\,C\,\E^\mathbb{Q}\Big[\Big(\int_0^t\big|\Delta \bar{z}_s^i\big|^2\Psi_s^2\big|\Delta \bar{M}_s^i\big|^2\D s\Big)^{\frac{1}{2}}\Big]\nonumber\\
\leq &\,C\,\E^\mathbb{Q}\Big[\Big(\max_{s\in[0,t]}\Big\{\big|\Delta \bar{z}_s^i\big|^2\Psi_s\Big\}\int_0^t\Psi_s\big|\Delta \bar{M}_s^i\big|^2\D s\Big)^{\frac{1}{2}}\Big]\nonumber\\
\leq & C\delta\E^\mathbb{Q}\Big[\max_ {s\in[0,t]}\big|\Delta \bar{z}_s^i\big|^2\Psi_s\Big] +\frac{C}{\delta}\E^\mathbb{Q}\Big[\int_0^t\Psi_s\big|\Delta \bar{M}_s^i\big|^2\D s\Big].
\end{align}
where $\delta>0$ is a constant. Similarly, we have
\begin{align}\label{boundx}
\E^\mathbb{Q}\Big[\sup_{r\in[0,t]}\Big|\int_0^r\Big(\Delta \bar{X}_s^i\Big)^{\text{T}} \Delta\bar{\sigma}_s^i\Psi_s\D W_s^i\Big|\Big]\leq C\delta\E^\mathbb{Q}\Big[\max_ {s\in[0,t]}\big|\Delta \bar{X}_s^i\big|^2\Psi_s\Big] +\frac{C}{\delta}\E^\mathbb{Q}\Big[\int_0^t\Psi_s\big|\Delta \bar{\sigma}_s^i\big|^2\D s\Big].
\end{align}
Combing (\ref{boundxz})-(\ref{boundx}) and choosing $\delta$ small enough, then we have 
\begin{align}
&\frac{1}{N}\sum\limits_{i=1}^N\E^\mathbb{Q}\Big[\sup_{r\in[0,t]}\big|\Delta\bar{X}_r^i\big|^2\Psi_r+\sup_{r\in[0,t]}\big|\Delta \bar{z}_r^i\big|^2\Psi_r\Big]\nonumber\\\nonumber
\leq \,&C\,\E^\mathbb{Q}\Big[\int_0^t W_1\left(\bar{\pi}_s^N,\pi_s\right)^2\Psi_s \D s\Big]+\frac{C}{N}\sum_{i=1}^N\E^\mathbb{Q}\Big[\int_0^t\Psi_s\big|\Delta \bar{\sigma}_s^i\big|^2+\Psi_s\big|\Delta \bar{M}_s^i\big|^2\D s\Big].
\end{align}
In view of the Lipschitz continuity of $(\sigma,M)$ and Gronwall's inequality, we get
\begin{align}
&\frac{1}{N}\sum\limits_{i=1}^N\E^\mathbb{Q}\Big[\sup_{r\in[0,t]}\big|\Delta\bar{X}_r^i\big|^2\Psi_r+\sup_{r\in[0,t]}\big|\Delta \bar{z}_r^i\big|^2\Psi_r\Big]\nonumber\\
\leq &C\,\E^\mathbb{Q}\Big[\int_0^t W_1\left(\bar{\pi}_s^N,\pi_s\right)^2\Psi_s +W_1\left(\bar{\pi}_s^N,\pi_s^N\right)^2\Psi_s\D s\Big]\nonumber\\\nonumber
\leq &C\,\max_{s\in[0,t]}\E^\mathbb{Q}\Big[ W_1\left(\bar{\pi}_s^N,\pi_s\right)^4\Big]^{\frac{1}{2}} +C\max_{s\in[0,t]}\E^\mathbb{Q}\Big[W_1\left(\bar{\pi}_s^N,\pi_s^N\right)^2\Psi_s\Big].
\end{align}

By the same distribution of $\{|X_t^i-\bar{X}_t^i|^2\Psi_t\}_{i=1}^N$ and the result of Lemma \ref{convergence rate}, Lemma \ref{Wasser-empirical-bound}, then we conclude with Theorem \ref{multi}.     
\end{proof}

\begin{proof}[Proof of Corollary \ref{coro2.1}]
Let $\beta>0$ be an undetermined constant. By Theorem \ref{multi}, we can get that
\begin{align}\label{mul_1}
\E^{\Q}\Big[\max_{t\in[0,T]}\big|X_t^{i}-\bar{X}^{i}_t\big|^2\mathbbm{1}_{\left\{\Psi_t\geq e^{-\beta}\right\}}\Big]\leq  e^{\beta}\E^{\Q}\Big[\max_{t\in[0,T]}\big|X_t^{i}-\bar{X}^{i}_t\big|^2\Psi_t\mathbbm{1}_{\{\Psi_t\geq e^{-\beta}\}}\Big]\leq Ce^{\beta}N^{-\frac{8}{7n+8}}.
\end{align}
Consequently, by Cauchy's inequality, it holds that
\begin{align}\label{mul_2}
\E^{\Q}\Big[\max_{t\in[0,T]}\big|X_t^{i}-\bar{X}^{i}_t\big|^2\mathbbm{1}_{\left\{\Psi_t< e^{-\beta}\right\}}\Big] &\leq C\E^{\Q}\Big[\max_{t\in[0,T]}\big(\big|X_t^{i}\big|^2+\big|\bar{X}^{i}_t\big|^2\Big)\mathbbm{1}_{\big\{\Psi_t< e^{-\beta}\big\}}\Big]\nonumber\\
&\leq C\E^{\Q}\Big[\max_{t\in[0,T]}\big|X_t^{i}\big|^{2q}+\max_{t\in[0,T]}\big|\bar{X}^{i}_t\big|^{2q}\Big]^{\frac{1}{q}}\E\Big[\max_{t\in[0,T]}\mathbbm{1}_{\big\{\Psi_t< e^{-\beta}\big\}}\Big]^{\frac{1}{p}}\nonumber\\
&\leq C\Q\Big(\min_{t\in[0,T]}\Psi_t< e^{-\beta}\Big)^{\frac{1}{p}}.
\end{align}
With the help of Chebyshev's inequality, we have
\begin{align*}
\mathbb{Q}\Big[\min_{t\in[0,T]}\Psi_t< e^{-\beta}\Big]&=\mathbb{Q}\bigg[\max_{t\in[0,T]}\Big\{N\sum\limits_{i=1}^N\Big(\big|w_t^i\big|^2+\big|\bar{w}_t^i\big|^2\Big)\big(1+|X_t^i|^2\big)\Big\}>\beta\bigg]\\
&\leq \frac{1}{\beta} \E^{\Q} \Big[\max_{t\in[0,T]} N\sum\limits_{i=1}^N\Big(\big|w_t^i\big|^2+\big|\bar{w}_t^i\big|^2\Big)\Big(1+\big|X_t^i\big|^2\Big)\Big].
\end{align*}
Using BDG inequality and Gronwall's inequality, we have the following standard estimates
\begin{align}\label{bdg}
\E^{\Q}\!\Big[\max\limits_{s\in[0,T]}\Big\{\big|w_s^i\big|^2+\big|\bar{w}_s^i\big|^2\Big\}\Big] + \E^{\Q}\!\Big[\max\limits_{s\in[0,T]}\Big\{\big|w_s^i\big|^4+\big|\bar{w}_s^i\big|^4\Big\}\Big]^{\frac{1}{2}}\leq \frac{C}{N^2},\quad \E^{\Q}\!\Big[\max\limits_{s\in[0,T]}\big|X_s^j\big|^4\Big]^{\frac{1}{2}}\leq C.
\end{align}
Thus we have
\be\label{mul_3}
\mathbb{Q}\Big[\min_{t\in[0,T]}\Psi_t< e^{-\beta}\Big]\leq  \frac{C}{\beta}.
\ee
By the definition of $\tau_\beta$ and (\ref{mul_1}), we get 
$$
\E^{\Q}\Big[\max_{t\in[0,\tau_{\beta}\wedge T]}\big|X_t^{i}-\bar{X}^{i}_t\big|^2\Big]\leq  \E^{\Q}\Big[\max_{t\in[0, T]}\big|X_t^{i}-\bar{X}^{i}_t\big|^2\mathbbm{1}_{\left\{\Psi_t\geq e^{-\beta}\right\}}\Big]\leq Ce^{\beta}N^{-\frac{8}{7n+8}},
$$
and by (\ref{mul_3}), we have
$$
\mathbb{Q}(\tau_{\beta}>T)\geq  1-\frac{C}{\beta}.
$$
Then we prove the first estimate in Corollary~\eqref{coro2.1}.

Next, by inequality (\ref{mul_1}), (\ref{mul_2}) and (\ref{mul_3}), it holds that
\begin{align}
\E^{\Q}\Big[\max_{t\in[0,T]}\big|X_t^{i}-\bar{X}^{i}_t\big|^2\Big]&=\E^{\Q}\Big[\max_{t\in[0,T]}\left|X_t^{i}-\bar{X}^{i}_t\right|^2\mathbbm{1}_{\left\{\Psi_t\leq e^{-\alpha_N}\right\}}\Big]+\E^{\Q}\Big[\max_{t\in[0,T]}\big|X_t^{i}-\bar{X}^{i}_t\big|^2\mathbbm{1}_{\left\{\Psi_t> e^{-\alpha_N}\right\}}\Big]\nonumber\\
&\leq Ce^{\beta}N^{-\frac{8}{7n+8}}+C\beta^{-\frac{1}{2p}}.\nonumber
\end{align}
 If we take $\beta=\Lambda\ln(N)$, where $\Lambda=\Lambda(K,T)$ is a constant small enough, then we have
 $$
 \E^{\Q}\Big[\max_{t\in[0,T]}\big|X_t^{i}-\bar{X}^{i}_t\big|^2\Big]\leq C\big[\ln\big(N\big)\big]^{-\frac{1}{2p}},\quad\quad \forall p>1.
 $$
 Similarly, by Lemma \ref{Wasser-empirical-bound}, we have 
  $$
 \max_{t\in[0,T]}\E^{\Q}\Big[W_1\big(\pi_t^N,\pi_t\big)^2\Big]\leq C\big[\ln\big(N\big)\big]^{-\frac{1}{2p}},\quad\quad \forall p>1,
 $$
 then the proof is completed.
\end{proof}

\subsection{Proof of Theorem \ref{Euler scheme}}
\begin{proof}[Proof of Theorem \ref{Euler scheme}]
We define 
$$
z_t^{i,\Delta}=\ln\big(w_t^{i,\Delta}\big).
$$ 
With the help of Ito's formula, we have
\be\nonumber
\D z_t^{i,\Delta}= R\Big(X_{\theta(t,\Delta)}^{i,\Delta},\pi_{\theta(t,\Delta)}^N\Big)+M^{\text{T}}\Big(X_{\theta(t,\Delta)}^{i,\Delta},\pi_{\theta(t,\Delta)}^N\Big) \D Y_t.
\ee

Denote multiplier 
$$
\Psi_t^{\Delta}=\exp\bigg\{ -\alpha N\sum\limits_{k=1}^N\int_0^t\left[\frac{1}{N^2}+\Big(\big|w_s^k\big|^2+\big|w_s^{k,\Delta}\big|^2\Big)\Big(1+\big|X_s^{k}\big|^2\Big)\right]\mathrm{d}s \bigg\}.
$$ 
By the boundedness of $b$ and $\sigma$, we have
\begin{align}\label{dis1}
\E^\Q\Big[ \big|X_s^i-X_{\theta(s,\Delta)}^{i,\Delta}\big|^2\Big]&\leq2\E^\Q\Big[ \big|X_s^i-X_s^{i,\Delta}\big|^2\Big]+2\E^\Q\Big[ \big|X_{\theta(s,\Delta)}^{i,\Delta}-X_s^{i,\Delta}\big|^2\Big]\nonumber\\
 &\leq 2\E^\Q\left[ \left|X_s^i-X_s^{i,\Delta}\right|^2\right]+C\Delta.
\end{align}
With the similar calculation that deduces Lemma \ref{lemma4}, we have
\be    \nonumber\E^\Q\Big[W_1\big(\pi_{s}^{N,\Delta},\pi_{\theta(s,\Delta)}^{N,\Delta}\big)^2\Big]\leq C\Delta.
\ee
Similarly, it holds that
\be\label{dis2}
\E^\Q\left[W_1\left(\pi_{s}^{N},\pi_{\theta(s,\Delta)}^{N,\Delta}\right)^2\right]\leq 2\E^\Q\left[W_1\left(\pi_{s}^{N},\pi_{s}^{N,\Delta}\right)^2\right]+C\Delta.
\ee
Combing  (\ref{dis1}) and (\ref{dis2}), we have
\begin{align*}
\E^\Q\Big[\Big|f\big(X_t^i,\pi_{t}^{N}\big)-f\big(X_{\theta(t,\Delta)}^{i,\Delta},\pi_{\theta(t,\Delta)}^{N,\Delta}\big)\Big|^2\Big]\leq&\displaystyle C\, \E^\Q\Big[ \big|X_t^i-X_{\theta(t,\Delta)}^{i,\Delta}\big|^2+W_1\big(\pi_{t}^{N},\pi_{\theta(t,\Delta)}^{N,\Delta}\big)^2\Big] \nonumber\\
\leq & C\,\E^{\Q}\Big[\big|X_t^i-X_{t}^{i,\Delta}\big|^2+W_1\big(\pi_{t}^{N},\pi_{t}^{N,\Delta}\big)^2\Big] +C\Delta,
\end{align*}
where $f=b,\sigma,R,H$.
Then with the similar calculation that deduces Theorem \ref{multi}, if we take $\alpha$ large enough, then we have
\be \label{EMx}
\E^\Q\Big[\max\limits_{s\in[0,t]}\big|X_s^i-X_s^{i,\Delta}\big|^2\Psi_s^{\Delta}\Big]\leq C \Delta,
\ee
and 
\be\label{Deltabound}
\E^\Q\left[\max\limits_{s\in[0,t]}\left|X_s^{i}-X_s^{i,\Delta}\right|^2\right]\leq C\left|\ln\left(\Delta^{-1}\right)\right|^{-\frac{1}{2p}}.
\ee
Combining Theorem \ref{multi} and (\ref{Deltabound}), we can finish our proof.
\end{proof}

\section{Numerical Experiment}\label{secnum}
Although our theoretical error bound is $\mathcal{O}( [ \ln(N) ]^{-\frac{1}{2}+})$, the numerical scheme works very well in our numerical experiments (even in the cases when coefficients are not bounded).
Here we exhibit an example in which the solution can be written down explicitly.
Let us consider the following CMVSDE in the completed probability space $( \Omega, \mathbb{Q}, \mathcal{F})$:
\begin{equation}\label{Nume}
\left\{
\begin{aligned}
X_t & = 2x_0 + \int_0^t b\left( s, X_s, \pi_s \right) \D s+ \int_0^t \sigma_1 \left( s, X_s, \pi_s \right) \D W_s + \int_0^t \sigma_2 \left( s, X_s \right) \D Y_s, \\
L_t & = 1+ \int_0^t h\left( s, X_s, \pi_s \right) L_s\D Y_s,
\end{aligned}
\right.
\end{equation}
where $x_0\in\mathbb{R}$, and $(W,Y)$ is the standard $\mathbb{R}^2$-valued Brownian motion under $\mathbb{Q}$, and the mean-field term $\pi_s:=\mathscr{L}^\mathbb{P} (X_s |\mathcal{F}_s^Y)$ with $\frac{\D \mathbb{P}}{\D \mathbb{Q}}=L_T$, and the coefficients are defined by
$$
b(\omega,s,x,\pi) = b_0 \left[ x + \pi\left(\mathcal{L}\right) - \mathcal{R} \left(\omega,s\right) \right],
$$
$$
\sigma_1(\omega,s,x,\pi) = d_0 \left[ x + \pi\left(\mathcal{L}\right) - \mathcal{S} \left(\omega,s\right) \right],
$$
$$
\sigma_2(\omega,s,x) = c_0\,x_0\,e^{b_0s} \left[ x - \mathcal{T} \left(\omega,s\right) \right],
$$
$$
h(\omega,s,x,\pi) = c_0 \left[ x + \pi\left(\mathcal{L}\right) - \mathcal{U} \left(\omega,s\right) \right],
$$
with $( b_0, c_0, d_0 )\in\mathbb{R}^3$, and $\mathcal{L}$ being the linear function $\mathcal{L}(x) = x$ such that
$$
\pi_s\left(\mathcal{L}\right) = \E^\mathbb{P} \left[ \mathcal{L} \left(X_s\right) \Big|\mathcal{F}_s^Y\right] = \E^\mathbb{P} \left[ X_s \Big|\mathcal{F}_s^Y\right] = \frac{\E^\mathbb{Q} \big[ X_s L_s \big|\mathcal{F}_s^Y\big] }{ \E^\mathbb{Q} \big[ L_s \big|\mathcal{F}_s^Y\big]},
$$
and the stochastic processes being defined by
\begin{align*}\nonumber
\mathcal{R}\left(\omega,s\right) & = x_0 e^{b_0 s} \bigg[ 1+ \exp\Big(c_0 x_0 \int_0^s e^{br}\D Y_r \left(\omega\right) - \frac{1}{4b_0} c^2_0  x_0^2(e^{2b_0 s}-1)\Big) \bigg],\nonumber\\
\mathcal{S}\left(\omega,s\right) & = x_0 e^{b_0 s} \bigg[ 1 + 2\exp\Big( c_0 x_0 \int_0^s e^{br} \D Y_r \left(\omega\right) -\frac{1}{4b_0} c_0^2 x_0^2 (e^{2b_0 s}-1)\Big) \bigg],\nonumber\\
\mathcal{T}\left(\omega,s\right) & = x_0 \exp\left[ \left(b_0-\frac{1}{2} d_0^2 \right) s + d_0 W_s \left(\omega\right)\right],\nonumber\\
\mathcal{U}\left(\omega,s\right) & =  2 x_0 e^{b_0 s} \exp\left[ c_0 x_0 \int_0^s e^{b_0 r}\D Y_r \left(\omega\right)  -\frac{1}{4b_0} c^2_0 x_0^2 \Big(e^{2b_0 s}-1\Big) \right] \\
&\quad + x_0 \exp\bigg[ \Big(b_0 -\frac{1}{2} d_0^2 \Big) s+ d_0 W_s\left(\omega\right) \bigg].\nonumber
\end{align*}

It is straightforward to check that (\ref{Nume}) has a unique solution
$$X_t=x_0 \exp \left[ \Big( b_0 -\frac{1}{2}d_0^2\Big)t + d_0 W_t\right] + x_0 \exp\left[c_0 x_0 \int_0^t e^{b_0 s}\D Y_s - \frac{1}{4b_0} c_0^2 x_0^2 \left(e^{2b_0 t}-1 \right)+b_0 t\right].$$

Now we use the Euler scheme (\ref{Euler}) to compute the partially observable mean-field SDE (\ref{Nume}). In our numerical experiments, we take  $T=0.1$, $b_0=d_0=c_0=x_0=1$, and $\Delta=10^{-5}$. We sample N particles and repeat the simulation independently $N_{\text{compute}}=100$ times, where the particle number $N$ is taken from the set $\mathscr{S}:=\{10k-5:\,\,1\leq k\leq 10\}$. Finally, for each $N\in \mathscr{S}$, we record the error
$$
e_N:=\frac{1}{N}\max\limits_{1\leq n\leq T/\Delta}\bigg\{\sum\limits_{i=1}^N\left|X_{n\Delta}^i-X_{n\Delta}^{i,\Delta}\right|^2\bigg\}
$$
for $N_{\text{compute}}=100$ times and compute the mean of the error $e_N$. In Figure \ref{fig: numerical figure}, we give our computed results and we can observe that the algorithm has the $\mathcal{O}(\frac{1}{N})$ -convergence rate in this example.

\begin{figure}[H]
\centering
\includegraphics[scale=0.7]{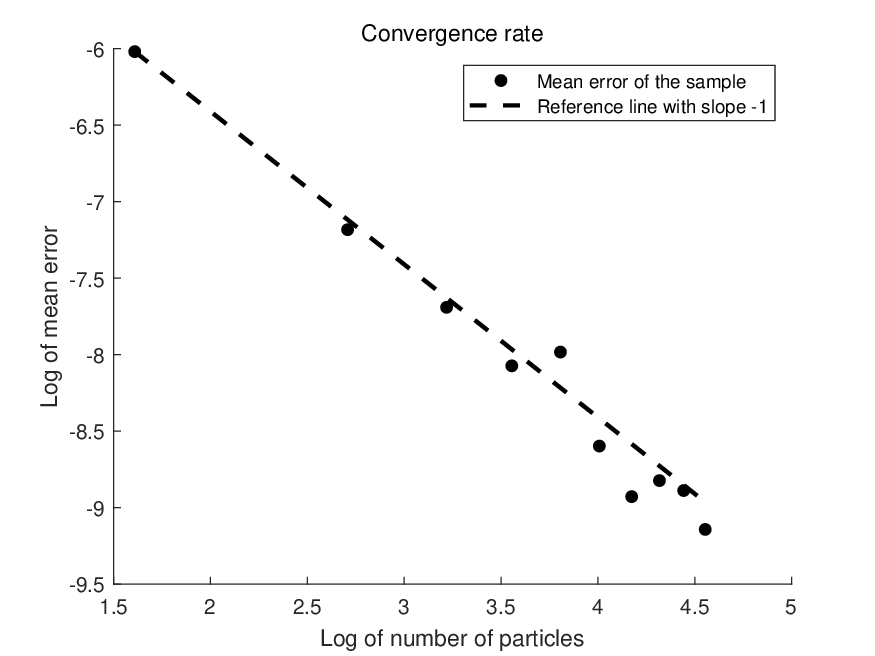}
\caption{Euler Scheme with Particle Approximation}
\label{fig: numerical figure}
\end{figure}

\bibliographystyle{plain}
\bibliography{ref}

\end{document}